\documentclass[11pt]{amsart}

\usepackage[T1]{fontenc}
\usepackage{amsmath,amssymb,amsthm,mathtools,mathrsfs}
\usepackage[margin=1in]{geometry}
\usepackage{microtype}
\usepackage{booktabs}
\usepackage{enumitem}
\usepackage{tikz}
\usepackage{hyperref}

\hypersetup{
  colorlinks=true,
  linkcolor=blue,
  citecolor=blue,
  urlcolor=blue,
  pdftitle={Intertwining the line-bundle and Grauert-tube Hardy quantizations of the round two-sphere},
  pdfauthor={Hy P.G. Lam},
  pdfsubject={Intertwining line-bundle and Grauert-tube Hardy quantizations on the unit cosphere bundle of the round two-sphere},
  pdfkeywords={Hardy space, unit cosphere bundle, Grauert tube, adapted complex structure, Szego kernel, intertwining operator, Legendre polynomial, principal angles, Fredholm determinant, Bargmann--Fock space}
}

\newcommand{\SO}{\mathrm{SO}}
\newcommand{\SU}{\mathrm{SU}}
\newcommand{\U}{\mathrm{U}}
\newcommand{\CP}{\mathbb{CP}}
\newcommand{\C}{\mathbb{C}}
\newcommand{\R}{\mathbb{R}}

\newcommand{\N}{\mathbb{N}}
\newcommand{\Sph}{\mathbb{S}}
\newcommand{\ii}{i}
\newcommand{\Sp}{\mathrm{Sp}}

\newcommand{\dd}{\mathrm{d}}
\newcommand{\Tr}{\operatorname{Tr}}
\newcommand{\Leg}{\mathrm{P}}
\newcommand{\Coth}{\operatorname{coth}}
\newcommand{\Csch}{\operatorname{csch}}
\newcommand{\Span}{\operatorname{span}}
\newcommand{\Id}{\mathrm{Id}}
\newcommand{\Li}{\operatorname{Li}}
\newcommand{\BF}{\mathrm{BF}}
\newcommand{\ip}[2]{\langle #1,#2\rangle}
\newcommand{\norm}[1]{\lVert #1\rVert}
\newcommand{\abs}[1]{\lvert #1\rvert}
\newcommand{\diag}{\operatorname{diag}}
\newcommand{\ket}[1]{\lvert #1\rangle}
\newcommand{\bra}[1]{\langle #1\rvert}

\theoremstyle{plain}
\newtheorem{theorem}{Theorem}[section]
\newtheorem{proposition}[theorem]{Proposition}
\newtheorem{lemma}[theorem]{Lemma}
\newtheorem{corollary}[theorem]{Corollary}

\theoremstyle{definition}
\newtheorem{definition}[theorem]{Definition}

\theoremstyle{remark}
\newtheorem{remark}[theorem]{Remark}

\title[Intertwining two Hardy quantizations]{Intertwining the line-bundle and Grauert-tube Hardy quantizations of the round two-sphere}
\author[Hy P.G. Lam]{Hy P.G. Lam}
\address{Department of Mathematical Sciences, Worcester Polytechnic Institute, Worcester, MA 01609}
\email{hlam@wpi.edu}
\address{Department of Mathematics, Northwestern University, Evanston, IL 60208}
\email{hylam.math@gmail.com, hylam2023@u.northwestern.edu}
\subjclass[2020]{32V20 (Primary); 22E46, 53D50, 58J40, 32A25, 47B35, 81S10 (Secondary)}
\keywords{Hardy space, unit cosphere bundle, Grauert tube, adapted complex structure, Szeg\H{o} kernel, intertwining operator, Legendre polynomial, principal angles, Fredholm determinant, Bargmann--Fock space}
\date{August 14, 2026}

\makeatletter
\renewcommand{\l@subsection}{\@tocline{2}{0pt}{2.5pc}{3pc}{}}
\renewcommand{\l@subsubsection}{\@tocline{3}{0pt}{4.5pc}{4pc}{}}
\makeatother

\begin{document}

\begin{abstract}
We compare two natural Hardy quantizations carried by the same unit cosphere bundle of the round two-sphere.  Through the identification $\Sph^2\simeq\CP^1$, the unit cosphere bundle is the unit circle bundle of $K_{\CP^1}\simeq\mathcal O(-2)$, the dual of the positive anticanonical line bundle $\mathcal O(2)$.  Through the real-analytic round metric, the imaginary-time exponential map identifies the same cosphere bundle with every Grauert-tube boundary $\partial M_\tau$.  The line-bundle Hardy space is organized by holomorphic sections on $\CP^1$ and the vertical circle action.  The Grauert-tube Hardy space is defined by the adapted complex structure, whose Reeb flow is the geodesic flow.  After the invariant measures are normalized, both are naturally realized in $L^2(\SO(3))$.  Each is multiplicity-free under the left $\SO(3)$ action and selects one line in each Peter--Weyl multiplicity space.  We compute the normalized overlap of these lines in closed form,
\[
  \bigl|\ip{\widehat w_\ell(\tau)}{\widehat u_\ell}\bigr|
  =\frac{\sqrt{\binom{2\ell}{\ell}}}{2^\ell}
    \frac{\sinh^\ell\!\tau}{\sqrt{\Leg_\ell(\cosh 2\tau)}}.
\]
The normalized kernel vectors define an explicit equivariant unitary between the two Hardy spaces, and their squared overlaps are the eigenvalues of the positive trace-class operator $\mathsf T_\tau=\Pi_h\Pi_\tau\Pi_h$.  We derive four exact trace series with the Hardy projectors inserted and show that their sum differs from the full flat trace by a distribution whose Abel regularization has cubic growth at every geodesic period.  The eigenvalues of $\mathsf T_\tau$ also give a genus-zero Fredholm determinant of order zero.  Truncating the complete large-$\ell$ expansion at any fixed order gives a finite polylogarithmic expression whose possible singularities occur where $r(\tau)^{2s}=1$. A Bargmann--Fock calculation gives the constant
\[
  C_{\BF}(\tau)=(2\sinh 2\tau)^{1/4}e^{-\tau/2}
\]
as the Gaussian matrix coefficient of the metaplectic operator of the linear symplectic map that compares the two contact planes.  This constant is the $\ell$-independent prefactor in the large-$\ell$ asymptotic of the overlap.
\end{abstract}

\maketitle

\section*{Acknowledgments}
This work began during the author's doctoral studies at Northwestern University and continued at Worcester Polytechnic Institute.  The author is especially grateful to the late Steve Zelditch for many conversations at Northwestern and for his work on Szeg\H{o} kernels and Grauert tubes, which shaped the questions considered here.  The author also thanks Robert Chang, Michael Geis, Abraham Rabinowitz, John Toth, and Jared Wunsch for helpful discussions.

\tableofcontents

\section{Introduction}

\subsection{Why the two Hardy quantizations are naturally paired}

Let $(M,g)$ be a compact oriented real-analytic Riemannian surface.  Its unit cosphere bundle $S^*M$ has two natural realizations as the boundary of a complex surface.

The first comes from the complex structure of the base.  The orientation and conformal class of $g$ make $M$ a Riemann surface.  Let $K_M=T^{*(1,0)}M$ be its canonical line bundle, equipped with the Hermitian metric induced by $g$.  For a real covector $\xi$, write
\[
  \xi^{1,0}=\frac12\bigl(\xi-\ii\,\xi\circ J\bigr).
\]
Taking the normalized $(1,0)$-part gives a fiberwise diffeomorphism
\begin{equation}
\label{eq:cosphere-canonical-bundle}
  \iota_{\mathrm{lb}}\colon S^*M\longrightarrow S(K_M),
  \qquad
  \iota_{\mathrm{lb}}(x,\xi)
  =\left(x,\frac{\xi^{1,0}}{\norm{\xi^{1,0}}_g}\right),
\end{equation}
where $S(K_M)$ is the unit circle bundle of $K_M$.  Thus $S^*M$ is naturally the boundary of the unit disk bundle in the holomorphic cotangent line.  If $K_M^{-1}$ is positive, this is the standard strictly pseudoconvex boundary used in positive line-bundle quantization, and its Hardy space has the Fourier decomposition
\begin{equation}
\label{eq:line-bundle-hardy-general}
  H^2\bigl(S(K_M)\bigr)
  \simeq
  \widehat\bigoplus_{\ell\ge0}H^0\bigl(M,K_M^{-\ell}\bigr).
\end{equation}
The circle action on $S(K_M)$ is the vertical rotation in the cotangent fibers \cite{ZelditchSzego,ShiffmanZelditch}.

The second realization comes from the real-analytic metric.  For every admissible Grauert radius $\tau>0$, the imaginary-time exponential map
\begin{equation}
\label{eq:cosphere-grauert-boundary}
  \iota_\tau\colon S^*M\longrightarrow \partial M_\tau,
  \qquad
  \iota_\tau(x,\xi)
  =\exp_x^{\C}\!\bigl(\ii\tau\xi^\sharp\bigr),
\end{equation}
identifies the unit cosphere bundle with the boundary of the radius-$\tau$ Grauert tube in the complexification of $M$ \cite{GuilleminStenzel,LempertSzoke,ZelditchGrauert}.  Pulling back the boundary CR structure gives another strictly pseudoconvex CR structure on $S^*M$.  Its Reeb flow is a constant reparametrization of the geodesic flow, with the constant determined by $\tau$.

For the round sphere, $M\simeq\CP^1$ and
\[
  K_M\simeq\mathcal O(-2),
  \qquad
  K_M^{-1}\simeq\mathcal O(2).
\]
The line-bundle realization therefore gives
\[
  Y=S\bigl(\mathcal O(-2)\bigr)\simeq S^*\Sph^2,
\]
and its Hardy space assembles the spaces $H^0(\CP^1,\mathcal O(2\ell))$.  The Grauert realization identifies every $\partial M_\tau$ with the same unit cosphere bundle.  Since
\begin{equation}
\label{eq:common-cosphere-identifications}
  Y\simeq S^*\Sph^2\simeq S^1\Sph^2
  \simeq \partial M_\tau\simeq\SO(3),
\end{equation}
a normalization of the invariant measures places both Hardy spaces in the same Hilbert space $L^2(\SO(3))$.

This is why the two quantizations belong in the same problem.  They quantize the same classical phase space, but they retain different parts of its geometry.  The line-bundle CR structure comes from the holomorphic cotangent line and its Chern connection, and its Reeb flow is the vertical circle action.  The Grauert-tube CR structure comes from the full real-analytic metric, and its Reeb flow is the geodesic flow.  They are complementary in geometric origin, not complementary subspaces of $L^2(\SO(3))$.  The equivariant unitary constructed below intertwines the common left $\SO(3)$-action.  It does not identify the two Reeb flows.

The round sphere is the basic homogeneous case in which this comparison can be carried out exactly.  Both Hardy spaces are left-invariant and multiplicity-free.  In the Peter--Weyl block $V_\ell\otimes V_\ell^*$, each has the form $V_\ell\otimes L_\ell$ for a line $L_\ell\subset V_\ell^*$.  The principal angle between the line-bundle and Grauert lines measures the relative position of the two Hardy structures at level $\ell$.  Its cosine is the singular value of the product of the two Szeg\H{o} projectors on that block.  The resulting overlap sequence determines the equivariant intertwiner, the spectrum of $\Pi_h\Pi_\tau\Pi_h$, and the trace, determinant, zeta, and Bargmann--Fock calculations developed below.  On a general real-analytic surface, the relative position of the two Hardy spaces is difficult to describe.  The exact formula obtained here gives a complete model calculation for that broader problem.

\begin{figure}[ht]
\centering
\begin{tikzpicture}[scale=1.6, >=stealth]
  \draw[->, thick, gray!70] (-0.3,0) -- (3.3,0);
  \draw[->, thick, gray!70] (0,-0.3) -- (0,2.6);
  \draw[->, very thick, blue!70!black] (0,0) -- (2.7,0);
  \node[blue!70!black, below] at (2.15,-0.02) {$\C\widehat u_\ell$};
  \draw[->, very thick, red!70!black] (0,0) -- ({2.7*cos(38)},{2.7*sin(38)});
  \node[red!70!black, above right] at ({2.7*cos(38)},{2.7*sin(38)}) {$\C\widehat w_\ell(\tau)$};
  \draw[thick] (0.9,0) arc[start angle=0, end angle=38, radius=0.9];
  \node at ({1.15*cos(19)},{1.15*sin(19)}) {~ $\theta_\ell(\tau)$};
  \node[below right] at (0.02,-0.08) {$0$};
  \node[anchor=north] at (1.6,-0.55) {\small Peter--Weyl block $V_\ell\otimes V_\ell^*$};
\end{tikzpicture}
\caption{The line-bundle, or Hopf, Hardy space and the Grauert-tube Hardy space select the lines $\C\widehat u_\ell$ and $\C\widehat w_\ell(\tau)$ in the multiplicity factor $V_\ell^*$.  Formula~\eqref{eq:overlap-formula} computes their angle.}
\label{fig:spectral-angle}
\end{figure}

\subsection{Main results}

\begin{theorem}[CR kernel lines and their overlap]
\label{thm:main}
Fix $\tau>0$.  For every $\ell\in\N_0$, the Hopf and Grauert Hardy summands determine lines $\C u_\ell$ and $\C w_\ell(\tau)$ in $V_\ell^*$.  Using the self-duality $V_\ell^*\simeq V_\ell$, we realize the multiplicity factor as the Borel--Weil space $\mathrm{Sym}^{2\ell}\C^2$.  The two lines are then represented by
\[
  u_\ell=z_0^{2\ell},
  \qquad
  w_\ell(\tau)=\bigl(z_0^2+z_1^2+2\Coth\tau\,z_0z_1\bigr)^\ell.
\]
Consequently
\[
  H^2(Y)\simeq\widehat\bigoplus_{\ell\ge0}V_\ell\otimes\C u_\ell,
  \qquad
  H^2(\partial M_\tau)\simeq\widehat\bigoplus_{\ell\ge0}V_\ell\otimes\C w_\ell(\tau).
\]
If the invariant inner product is normalized by $\norm{z_0^{2\ell}}=1$, then
\begin{equation}
\label{eq:overlap-formula}
  \bigl|\ip{\widehat w_\ell(\tau)}{\widehat u_\ell}\bigr|
  =\frac{\sqrt{\binom{2\ell}{\ell}}}{2^\ell}
    \frac{\sinh^\ell\!\tau}{\sqrt{\Leg_\ell(\cosh 2\tau)}}.
\end{equation}
\end{theorem}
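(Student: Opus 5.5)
The plan has two parts: first identify each Hardy summand's line in $V_\ell^*$ as the line spanned by a Szeg\H{o}/evaluation functional at a base point, then compute the overlap as a single $\SU(2)$-invariant norm in $\mathrm{Sym}^{2\ell}\C^2$.

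\emph{Identifying the lines.} By Peter--Weyl, $L^2(\SO(3))=\widehat\bigoplus_\ell V_\ell\otimes V_\ell^*$. Both Hardy spaces are closed and invariant under the left action, so each meets the $\ell$-block in $V_\ell\otimes L_\ell$ for some subspace $L_\ell\subset V_\ell^*$. To find $L_\ell$, I will use that $L_\ell$ is spanned by the restriction of the reproducing (evaluation) functional at the identity coset, i.e.\ at the chosen base point.

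For the Hopf space $Y=S(\mathcal O(-2))$, the Hardy space consists of the nonnegative Fourier modes of the right (vertical) circle action. Only $z_0^{2\ell}$ has the required extremal right weight. Under Borel--Weil this weight vector is the highest weight vector $z_0^{2\ell}\in\mathrm{Sym}^{2\ell}\C^2$, so $u_\ell=z_0^{2\ell}$.

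For the Grauert tube, I realize $M_\tau$ inside the quadric $\{z\cdot z=1\}\subset\C^3$. Here $\iota_\tau(x,\xi)=x\cosh\tau+\ii\,\xi\sinh\tau$. The CR functions in the $\ell$-block are boundary values of holomorphic extensions of degree-$\ell$ spherical harmonics. Evaluation at the base point $p_\tau=\cosh\tau\,e_3+\ii\sinh\tau\,e_1$ is therefore represented by the harmonic projection of $(p_\tau\cdot x)^\ell$.

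Transport this through $\mathrm{Sym}^2\C^2\simeq\C^3$, a vector $v$ corresponding to a binary quadratic $q_v$. The $\ell$-th power map $\mathrm{Sym}^\ell(\mathrm{Sym}^2)\to\mathrm{Sym}^{2\ell}$ is exactly the equivariant projection onto the top summand $V_\ell$, i.e.\ onto the harmonics. So the Grauert line is $\C\,q_{p_\tau}^\ell$. With axes matched so that the Hopf base point corresponds to $z_0^2$, one gets $q_{p_\tau}\propto\sinh\tau(z_0^2+z_1^2)+2\cosh\tau\,z_0z_1$. This gives $w_\ell(\tau)$ as stated, and as a consistency check, $\tau\to\infty$ gives a null (highest-weight-type) vector $(z_0+z_1)^{2\ell}$. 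The direct-sum decompositions then follow, once I check that no Grauert or Hopf summand is larger than a line. That holds because the CR functions at a fixed point of a homogeneous boundary give a one-dimensional evaluation space.

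\emph{Computing the overlap.} Use the invariant inner product $\norm{z_0^az_1^b}^2=a!\,b!/(2\ell)!$, so that $\norm{z_0^{2\ell}}=1$. The coefficient of $z_0^{2\ell}$ in $w_\ell$ is $1$, hence $\ip{w_\ell}{u_\ell}=1$ and the overlap equals $1/\norm{w_\ell(\tau)}$. The claim reduces to
\[
\norm{w_\ell(\tau)}^2=\frac{4^\ell\,\Leg_\ell(\cosh2\tau)}{\binom{2\ell}{\ell}\sinh^{2\ell}\tau}.
\]
As a check, for $\ell=1$ the left side is $2+2\Coth^2\tau=2\cosh2\tau/\sinh^2\tau$, which matches.

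\emph{Norm identity (main obstacle).} For general $\ell$, I will use $\SU(2)$-invariance to diagonalize $q=z_0^2+z_1^2+2\Coth\tau\,z_0z_1$ to the form $\alpha z_0^2+\beta z_1^2$. The invariants here are the discriminant $\Coth^2\tau-1=\sinh^{-2}\tau$ and the norm. Then
\[
\norm{q^\ell}^2=\sum_k\binom{\ell}{k}^2|\alpha|^{2k}|\beta|^{2(\ell-k)}\frac{(2k)!(2\ell-2k)!}{(2\ell)!}.
\]
This should be recognized as a Laplace-type Legendre sum: since $\binom{\ell}{k}^2(2k)!(2\ell-2k)!/(2\ell)!$ equals $\binom{2k}{k}\binom{2\ell-2k}{\ell-k}/\binom{2\ell}{\ell}$, the sum becomes a classical generating identity that evaluates to a Legendre polynomial, with the argument $\cosh2\tau$ coming from $(|\alpha|^2+|\beta|^2)/(2|\alpha\beta|)$. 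Alternatively, I would write $\norm{q^\ell}^2$ as an integral of $|q|^{2\ell}$ over $\SU(2)$ (or $\CP^1$) and apply Laplace's integral for $\Leg_\ell$. Matching the constants $4^\ell/\binom{2\ell}{\ell}$ and the exact argument $\cosh2\tau$ is the delicate step.
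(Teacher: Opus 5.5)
\textbf{Verdict.} Your proposal is correct in substance. It computes the overlap exactly as the paper does but identifies the two lines by a genuinely different route, and two of its supporting sentences need repair.

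\textbf{The overlap computation.} Your reduction of the overlap to $1/\norm{w_\ell(\tau)}$ and your plan for the norm are the paper's own. The paper makes a unitary Takagi change of variables to $(\Coth\tau+1)z_0^2+(\Coth\tau-1)z_1^2$. It then uses the identity $\binom{\ell}{j}^2/\binom{2\ell}{2j}=\binom{2j}{j}\binom{2\ell-2j}{\ell-j}/\binom{2\ell}{\ell}$ and compares $((1-4x)(1-4tx))^{-1/2}$, with $t=\sigma_2^2/\sigma_1^2$, against the Legendre generating function. The constants then follow at once from $\sigma_1\sigma_2=\Csch^2\tau$ and $(\sigma_1^2+\sigma_2^2)/(2\sigma_1\sigma_2)=\cosh2\tau$, so this step is not delicate.

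\textbf{How you find the lines.} The paper works at the Lie-algebra level. It computes $J_\tau$ from $d\Phi_\tau$, obtains the $(0,1)$ generator $E_1-\ii\Coth\tau\,E_3$, proves $\dim\ker d\pi_\ell(Z)\le1$ for every nonzero $Z$, and solves a first-order ODE in the affine chart. You instead write the $\ell$-block as $\{g\mapsto h(gp_\tau):h\in\mathcal H_\ell^\C\}$. The line is then $\C\,\mathrm{ev}_{p_\tau}$, which is the image of $p_\tau^{\otimes\ell}$, namely $\C\,q_{p_\tau}^\ell$.

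This agrees with the paper's conventions. One has $(E_1-\ii\Coth\tau\,E_3)p_\tau=0$ and $(E_1+\ii E_2)(e_1+\ii e_2)=0$. The equivariant map $\C^3\to\mathrm{Sym}^2\C^2$ compatible with $E_1+\ii E_2\mapsto\partial_w$ sends $e_1+\ii e_2\mapsto z_0^2$, $e_1\mapsto\frac12(z_0^2+z_1^2)$ and $e_3\mapsto\ii z_0z_1$. Hence $p_\tau\mapsto\frac{\ii}{2}\sinh\tau\,(z_0^2+z_1^2+2\Coth\tau\,z_0z_1)$, as you claim.

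\textbf{What each route buys.} Your route explains the shape of $w_\ell$: both lines are $\ell$-th powers $q_v^\ell$, with $v$ null for Hopf and $v\cdot v=1$ for Grauert. It also avoids computing $J_\tau$. The price is an analytic input: the $\ell$-block of $H^2(\partial M_\tau)$ must be exactly the restriction of $\mathcal H_\ell^\C$. The paper sidesteps this by working with $\ker R_Z$ directly.

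\textbf{Two sentences to repair.}
\begin{enumerate}
\item $H^2(Y)$ is not the space of nonnegative Fourier modes of the vertical circle action, since the weight-$k$ mode of $L^2(Y)$ meets every block $\ell\ge k$. What you need is that the weight-$k$ part of $H^2(Y)$ is $H^0(\CP^1,\mathcal O(2k))\cong V_k$. That space lies only in block $k$, at extremal weight.
\item ``A one-dimensional evaluation space'' does not prove multiplicity one. Evaluation at a point is a single functional even on all of $L^2(\SO(3))$, where the multiplicity is $2\ell+1$. In your framework the correct reason is irreducibility.
\begin{itemize}
\item On the Hopf side, $H^0(\CP^1,\mathcal O(2\ell))\cong V_\ell$.
\item On the Grauert side, the $V_\ell$-isotypic part of $\mathcal O(M_\tau)$ is a single copy of $\mathcal H^\C_\ell$. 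This holds because restriction to the totally real $\Sph^2$ is injective and equivariant, and $L^2(\Sph^2)$ is multiplicity-free.
\end{itemize}
Alternatively, quote the multiplicity-one lemma (Lemma~\ref{lem:mult1}).
\end{enumerate}
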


We write
\[
  c_\ell(\tau)=\bigl|\ip{\widehat w_\ell(\tau)}{\widehat u_\ell}\bigr|,
  \qquad
  s_\ell(\tau)=c_\ell(\tau)^2,
  \qquad
  \theta_\ell(\tau)=\arccos c_\ell(\tau).
\]
The number $\theta_\ell(\tau)$ is the Fubini--Study distance between the two multiplicity lines.

\begin{theorem}[Equivariant unitary intertwiner]
\label{thm:intertwiner}
There is a unique $\SO(3)$-equivariant unitary
\[
  \mathcal T_\tau\colon H^2(\partial M_\tau)\longrightarrow H^2(Y)
\]
that sends $v\otimes\widehat w_\ell(\tau)$ to $v\otimes\widehat u_\ell$ for every $v\in V_\ell$.  Its extension by zero to $L^2(\SO(3))$ is a partial isometry and has block formula
\[
  \mathcal T_\tau\big|_{V_\ell\otimes V_\ell^*}
  =\Id_{V_\ell}\otimes\ket{\widehat u_\ell}\bra{\widehat w_\ell(\tau)}.
\]
\end{theorem}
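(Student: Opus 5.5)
The plan is to build $\mathcal T_\tau$ blockwise from the decompositions in Theorem~\ref{thm:main} and then settle uniqueness by Schur's lemma.

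\emph{Existence.} By Peter--Weyl, $L^2(\SO(3))=\widehat\bigoplus_{\ell}V_\ell\otimes V_\ell^*$ (integer $\ell$ only, since $\SO(3)$ sees odd-dimensional representations). The left action acts on the first factor only, and the blocks are mutually orthogonal. Theorem~\ref{thm:main} gives
\[
H^2(\partial M_\tau)=\widehat\bigoplus_\ell V_\ell\otimes\C\widehat w_\ell(\tau),
\qquad
H^2(Y)=\widehat\bigoplus_\ell V_\ell\otimes\C\widehat u_\ell.
\]
Here the unit vectors are normalized in the invariant inner product on $V_\ell^*$. That inner product is, up to the Peter--Weyl constant $(\dim V_\ell)^{-1/2}$, the one induced from $L^2(\SO(3))$, so the normalization is block-uniform.

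On each block I define $T_\ell=\Id_{V_\ell}\otimes\ket{\widehat u_\ell}\bra{\widehat w_\ell(\tau)}$. The map $v\otimes\widehat w_\ell\mapsto v\otimes\widehat u_\ell$ is isometric, because $\norm{v\otimes x}=\norm v\,\norm x$ in the tensor inner product and both $x$'s are unit vectors. It is also surjective onto $V_\ell\otimes\C\widehat u_\ell$. It kills $V_\ell\otimes(\widehat w_\ell)^\perp$ and commutes with $g\otimes\Id$. Summing over $\ell$, the family is uniformly bounded by norm one, so it defines a bounded operator. That operator restricts to a unitary $H^2(\partial M_\tau)\to H^2(Y)$ and vanishes on $H^2(\partial M_\tau)^\perp$. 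Hence its extension by zero is a partial isometry with initial space $H^2(\partial M_\tau)$ and final space $H^2(Y)$, and it has the stated block formula.

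\emph{Uniqueness.} Let $U\colon H^2(\partial M_\tau)\to H^2(Y)$ be any equivariant map with $U(v\otimes\widehat w_\ell)=v\otimes\widehat u_\ell$ for all $v\in V_\ell$ and all $\ell$. This prescribes $U$ on the algebraic direct sum of the blocks, which is dense. Continuity then forces $U=\mathcal T_\tau$.

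The phrase ``unique'' should be read with the phases of $\widehat u_\ell,\widehat w_\ell$ fixed, as in the statement. A more intrinsic remark is also available: by Schur's lemma, any equivariant unitary must map $V_\ell\otimes\C\widehat w_\ell$ onto $V_\ell\otimes\C\widehat u_\ell$, since the isotypic components are distinct irreducibles. Such a map is therefore $\Id\otimes(\text{phase}\cdot\ket{\widehat u_\ell}\bra{\widehat w_\ell})$, and the prescribed normalization kills the phase ambiguity.

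The only step needing care is the norm compatibility. Theorem~\ref{thm:main} normalizes by $\norm{z_0^{2\ell}}=1$ in the Borel--Weil model of $V_\ell^*$, and I must check that this agrees, up to the block-constant factor, with the restriction of the $L^2(\SO(3))$ inner product to the multiplicity factor. I will use that the Borel--Weil invariant inner product is unique up to a scalar on an irreducible representation. That scalar is common to $u_\ell$ and $w_\ell$ in the same block, so both normalized vectors, and hence the isometry property, are unaffected.
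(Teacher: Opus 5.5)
Your proposal is correct and takes essentially the same route as the paper. You define the rank-one unitary $\Id_{V_\ell}\otimes\ket{\widehat u_\ell}\bra{\widehat w_\ell(\tau)}$ on each Peter--Weyl block and take the Hilbert direct sum, so its zero extension is a partial isometry with initial projection $\Pi_\tau$ and final projection $\Pi_h$; for uniqueness the paper uses Schur's lemma with the prescribed values fixing the scalar on each block, which is your ``intrinsic remark'', and your density-plus-continuity argument is an equally valid and slightly more direct way to say the same thing.
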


The positive operator $\mathsf T_\tau=\Pi_h\Pi_\tau\Pi_h$ has eigenvalue $s_\ell(\tau)$ with multiplicity $2\ell+1$.  For fixed $\tau$ these eigenvalues decrease exponentially with $\ell$, while for fixed $\ell\ge1$ they increase strictly with $\tau$.  They define an order-zero Fredholm determinant and the zeta series
\[
  Z_{\mathsf T}(s,\tau)=\sum_{\ell\ge0}(2\ell+1)s_\ell(\tau)^s,
  \qquad \Re s>0.
\]
The exact trace formulas in Section~\ref{sec:trace-formulas} show that the sum of the four traces obtained by inserting the Hardy projectors does not agree with the full flat trace modulo a smooth function.  At every geodesic period, the Abel regularization of their difference has a cubic leading term.

\subsection{Large-\texorpdfstring{$\ell$}{ell} asymptotics}
The exact overlap has the asymptotic form
\begin{equation}
\label{eq:intro-factorization}
  c_\ell(\tau)
  =C_{\BF}(\tau)\,r(\tau)^\ell
   \bigl(1+O_\tau(\ell^{-1})\bigr),
\end{equation}
where
\[
  C_{\BF}(\tau)=(2\sinh 2\tau)^{1/4}e^{-\tau/2},
  \qquad
  r(\tau)=\frac{1-e^{-2\tau}}2.
\]
Section~\ref{sec:BF-leading} computes $C_{\BF}(\tau)$ independently as a metaplectic Gaussian matrix coefficient in the linear Heisenberg model.  This is a local calculation and does not require a global Fourier integral description of $\Pi_h\Pi_\tau$.  The coefficient agrees with the one obtained from the exact representation-theoretic formula.

The same expansion enters the determinant and the zeta series.  Truncating it after finitely many powers of $\ell^{-1}$ gives a finite combination of polylogarithms near the set $r(\tau)^{2s}=1$.  This calculation does not continue the full zeta function across $\Re s=0$.

\subsection{Relation to earlier work}
The adapted complex structure on a Grauert tube was constructed by Guillemin and Stenzel and independently by Lempert and Sz\H{o}ke \cite{GuilleminStenzel,LempertSzoke}.  The relation between Szeg\H{o} kernels on Grauert-tube boundaries and analytic continuation of eigenfunctions is developed by Zelditch \cite{ZelditchComplexZeros,ZelditchGrauert}.  The line-bundle Hardy space is the standard circle-bundle realization used in geometric quantization \cite{ZelditchSzego,ShiffmanZelditch,BBS,Berman,MaMarinescu}.

Chang and Rabinowitz study high-frequency Szeg\H{o} kernels and Husimi distributions on a single Grauert-tube boundary \cite{ChangRabinowitz}.  Here both Hardy spaces are placed in the same $L^2(\SO(3))$, and we compute their exact overlap in each multiplicity space rather than the asymptotics of either projector separately.

The geodesic Koopman operator and its flat trace also appear in the author's earlier work.  On compact hyperbolic surfaces, the Koopman representation has an explicit spectral decomposition from which the flat-trace distribution can be recovered \cite{LamHyperbolicFlatTrace}.  Sunada-type constructions give non-isometric examples with the same flat-trace data \cite{LamGuilleminRuelle}, while the first variation of the flat trace recovers marked lengths and gives local rigidity along smooth negatively curved deformations \cite{LamFirstVariation}.  Those results vary the metric or compare different metrics.  Here the metric and geodesic flow are fixed, while the CR polarization changes.  The sequence $s_\ell(\tau)$ records the angle between the two Hardy lines in each Peter--Weyl multiplicity space.

The Bargmann--Fock calculation is related to the Toeplitz calculus and to Fourier integral operators with complex phase developed by Boutet de Monvel, Guillemin, Sj\"ostrand, Melin, Borthwick, and Uribe \cite{BdMG,BdMS,MelinSjostrand,BU}.  Only the linear Heisenberg model is used here, and the coefficient reduces to a Gaussian integral.

\subsection{Comparison with heat-kernel transforms}
Hall's Segal--Bargmann transform for compact Lie groups and Stenzel's transform for compact symmetric spaces are unitary maps obtained from heat evolution followed by analytic continuation \cite{Hall,HallGQ,Stenzel,KirwinMouraoNunes}.  Their target spaces carry heat-kernel measures on complexifications or holomorphic $L^2$ structures determined by the symmetric-space geometry.  In the setting considered here, both Hardy spaces already lie in $L^2(\SO(3))$, and the relevant quantity in each Peter--Weyl block is the angle between two lines in $V_\ell^*$ rather than a heat-semigroup multiplier.

\subsection{Organization}
Sections~2--6 place the two Hardy spaces in a common Peter--Weyl decomposition and compute the exact overlap.  Sections~7--9 study the unitary intertwiner, the operator $\Pi_h\Pi_\tau\Pi_h$, and the trace formulas with the Hardy projectors.  Sections~10 and~11 treat the Fredholm determinant and the zeta function associated with $\Pi_h\Pi_\tau\Pi_h$.  Section~12 gives an independent Bargmann--Fock calculation of the leading overlap coefficient.

\subsection{Notation}\label{subsec:notation}
We use the following notation throughout.
\begin{itemize}[leftmargin=2.2em]
\item $Y=S(\mathcal O(-2))\simeq S^*\Sph^2\simeq\SO(3)$ is the line-bundle circle bundle.
\item $\partial M_\tau\simeq S^*\Sph^2\simeq S^1\Sph^2\simeq\SO(3)$ is the Grauert-tube boundary at radius $\tau>0$.
\item $\Pi_h$ and $\Pi_\tau$ are the orthogonal Szeg\H{o} projectors onto $H^2(Y)$ and $H^2(\partial M_\tau)$.
\item $V_\ell\simeq\mathrm{Sym}^{2\ell}\C^2$ is the irreducible $\SO(3)$ representation of dimension $2\ell+1$.
\item $c_\ell(\tau)$ is the overlap and $s_\ell(\tau)=c_\ell(\tau)^2$ is the eigenvalue of $\mathsf T_\tau=\Pi_h\Pi_\tau\Pi_h$ on the $\ell$th Hardy summand.
\item $r(\tau)=(1-e^{-2\tau})/2$ is the base of the exponential factor $r(\tau)^\ell$ in \eqref{eq:intro-factorization}.
\item $C_0(\tau)=\sqrt{2\sinh(2\tau)}e^{-\tau}=C_{\BF}(\tau)^2$.
\end{itemize}
The inner product is linear in the first variable and conjugate-linear in the second.  For $v\ne0$, we write $\widehat v=v/\norm v$.  All normalized overlaps are computed in the multiplicity factor $V_\ell^*$, where the Peter--Weyl normalization factor $(2\ell+1)^{-1}$ cancels.

\section{Two Hardy quantizations of \texorpdfstring{$\Sph^2$}{S2}}

\subsection{The line-bundle Hardy space for \texorpdfstring{$\mathcal O(2)$}{O(2)}}

Let $M=\Sph^2\simeq\CP^1$ with its round metric.  Its canonical line bundle is $K_M\simeq\mathcal O(-2)$.  Put $L=K_M^{-1}\simeq\mathcal O(2)$ and equip $L$ with its standard positive Hermitian metric.  Let
\[
  Y=S(K_M)\subset K_M\simeq L^*
\]
be the unit circle bundle.  By \eqref{eq:cosphere-canonical-bundle}, $Y$ is naturally identified with $S^*\Sph^2$.  It carries the standard Boothby--Wang contact form and the strictly pseudoconvex CR structure induced from the unit disk bundle in $L^*$.  The fiberwise tensor-square map on $\mathcal O(-1)$ restricts on unit circles to the degree-two map $e^{\ii\theta}\mapsto e^{2\ii\theta}$, whose kernel is $\{\pm1\}$.  Hence
\[
  Y\simeq \Sph^3/\{\pm1\}.
\]

The double cover $\SU(2)\to\SO(3)$ identifies $\Sph^3/\{\pm 1\}$ with $\SO(3)$.  We fix the resulting identification
\[
  Y\simeq S^*\Sph^2\simeq\SO(3).
\]
The CR structure on $Y$ is the horizontal lift of the complex structure on $\CP^1$.

The Hardy space $H^2(Y)$ is the closed subspace of $L^2(Y)$ consisting of boundary values of holomorphic functions on the unit disk bundle of $L^*$.  Its nonnegative circle weights are naturally identified with $H^0(\CP^1,L^\ell)=H^0(\CP^1,\mathcal O(2\ell))$.  Under the $\SO(3)$ identification, this is a left-invariant CR Hardy space.

\medskip

\subsection{The Grauert-tube Hardy space}

Let $M=\Sph^2$ with the round metric. The complexification of $M$ may be identified with the affine quadric
\[
Q^2=\{z\in\C^3\mid z\cdot z=1\}.
\]
The adapted complex structure on $TM$ identifies a Grauert-tube neighborhood of the zero section with a strictly pseudoconvex domain in $Q^2$. For each $\tau>0$ the boundary $\partial M_\tau$ is a smooth compact CR manifold equipped with the Szeg\H{o} projector $\Pi_\tau$ onto its Hardy space $H^2(\partial M_\tau)$.

We write $S^1M$ for the unit tangent bundle of a Riemannian manifold $M$.  The round metric identifies $S^*\Sph^2$ with $S^1\Sph^2$, and $S^1\Sph^2\simeq\SO(3)$.  For each $\tau>0$, the map
\[
  (x,\xi)\longmapsto \exp_x^{\C}\!\bigl(\ii\tau\xi^\sharp\bigr)
\]
identifies $S^*\Sph^2$ with $\partial M_\tau$.  We transport the Grauert CR structure to $\SO(3)$ through this map and view $H^2(\partial M_\tau)$ as a left-invariant Hardy space inside $L^2(\SO(3))$.

\begin{remark}
The contact form induced by the adapted complex structure may be written as $\alpha_\tau=\dd^c\rho$ for a defining function $\rho$ of the tube boundary.  The Reeb flow of $\alpha_\tau$ is the lifted geodesic flow on $S^1\Sph^2$.
\end{remark}

\section{Left-invariant CR structures and multiplicity freeness}

\subsection{Peter--Weyl decomposition}

Let $V_\ell$ be the irreducible $\SO(3)$ representation of dimension $2\ell+1$. The Peter--Weyl theorem gives an orthogonal decomposition
\begin{equation}
\label{eq:PW}
L^2(\SO(3))\simeq \bigoplus_{\ell\ge 0} V_\ell\otimes V_\ell^*.
\end{equation}
An $\SO(3)$ equivariant operator on $L^2(\SO(3))$ is block diagonal for \eqref{eq:PW} and acts on $V_\ell\otimes V_\ell^*$ as $\Id_{V_\ell}\otimes A_\ell$ for some linear map $A_\ell$ on $V_\ell^*$.

\begin{remark}[Normalization]
\label{rem:normalization}
Matrix coefficients satisfy the orthogonality relation
\[
  \int_{\SO(3)}D^\ell_{mn}(g)\,\overline{D^{\ell'}_{m'n'}(g)}\,\dd g
  =\frac{\delta_{\ell\ell'}\delta_{mm'}\delta_{nn'}}{2\ell+1}.
\]
The factor $(2\ell+1)^{-1}$ relates the $L^2(\SO(3))$ inner product to the abstract inner product on $V_\ell\otimes V_\ell^*$. Since all overlap and norm computations below take place inside a single $V_\ell^*$, this factor cancels in normalized quantities such as \eqref{eq:overlap-formula}.
\end{remark}

\medskip

\subsection{Left-invariant CR structures}

Let $G=\SO(3)$ and let $\mathfrak g=\mathfrak{so}(3)$. We fix the standard basis of $\mathfrak g$ given by the matrices
\begin{equation}
\label{eq:so3-matrices}
E_1=\begin{pmatrix}
0&0&0\\
0&0&-1\\
0&1&0
\end{pmatrix},\qquad
E_2=\begin{pmatrix}
0&0&1\\
0&0&0\\
-1&0&0
\end{pmatrix},\qquad
E_3=\begin{pmatrix}
0&-1&0\\
1&0&0\\
0&0&0
\end{pmatrix}.
\end{equation}
They satisfy
\begin{equation}
\label{eq:so3-bracket}
[E_1,E_2]=E_3,\qquad [E_2,E_3]=E_1,\qquad [E_3,E_1]=E_2.
\end{equation}
We write $L_X$ and $R_X$ for the left and right-invariant vector fields generated by $X\in\mathfrak g$.

A left-invariant CR structure on $G$ is determined by a complex line $\mathfrak q\subset\mathfrak g_\C$ satisfying $\mathfrak q\cap\overline{\mathfrak q}=\{0\}$. The corresponding $(0,1)$-operator on functions is the right-invariant operator $\overline L=R_{Z}$ where $Z$ spans $\mathfrak q$.

\begin{lemma}[Multiplicity one]
\label{lem:mult1}
Let $\mathfrak q\subset \mathfrak g_\C$ define a left-invariant strictly pseudoconvex
CR structure on $G=\SO(3)$. Then the corresponding Hardy space $\mathcal H\subset
L^2(G)$ is multiplicity-free in the Peter--Weyl decomposition. More precisely, for
each $\ell\in \N_0$, if
\[
\mathcal H_\ell:=\mathcal H\cap \bigl(V_\ell\otimes V_\ell^*\bigr),
\]
then either $\mathcal H_\ell=\{0\}$ or
\[
\mathcal H_\ell=V_\ell\otimes \C v_\ell
\]
for a uniquely determined line $\C v_\ell\subset V_\ell^*$.
\end{lemma}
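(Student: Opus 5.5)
The plan is to reduce everything to linear algebra in a single Peter--Weyl block. I take the Hardy space to be the closed subspace $\mathcal H=\{f\in L^2(G)\colon \overline L f=0 \text{ in the sense of distributions}\}$, where $\overline L=R_Z$ is the left-invariant vector field (the ``right-invariant operator'' of the text) generated by a spanning vector $Z$ of $\mathfrak q$.

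In both of our cases the CR manifold is the boundary of a strictly pseudoconvex domain: the disk bundle in $L^*$, or the Grauert tube in $Q^2$. For such boundaries, $L^2$ boundary values of holomorphic functions coincide with $L^2$ distributional CR functions. I regard this identification as the only analytic input. It is the step I expect to need the most care, and I would simply quote it for these embedded boundaries rather than reprove it.

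First, $R_Z$ commutes with left translations. Hence it commutes with the orthogonal projections $P_\ell$ onto the isotypic blocks $V_\ell\otimes V_\ell^*$, and it preserves each block. Pairing with smooth matrix coefficients shows that $f\in\mathcal H$ if and only if $P_\ell f\in\mathcal H$ for every $\ell$. Thus $\mathcal H=\widehat\bigoplus_\ell \mathcal H_\ell$, with $\mathcal H_\ell=\ker(R_Z|_{V_\ell\otimes V_\ell^*})$.

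On the block, $R_Z$ acts as $\Id_{V_\ell}\otimes\rho_\ell^*(Z)$, where $\rho_\ell^*$ is the contragredient action of $\mathfrak g_\C$ on $V_\ell^*$. Therefore $\mathcal H_\ell=V_\ell\otimes\ker\rho_\ell^*(Z)$. It remains to show $\dim\ker\rho_\ell^*(Z)\le1$. Uniqueness of the line then follows, because $V_\ell\otimes K$ determines $K$.

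For the dimension bound I use $\mathfrak g_\C\simeq\mathfrak{sl}(2,\C)$ and $V_\ell^*\simeq V_\ell\simeq\mathrm{Sym}^{2\ell}\C^2$. A nonzero $Z\in\mathfrak{sl}(2,\C)$ is, up to conjugation and scaling, one of two types.
\begin{itemize}[leftmargin=2.2em]
\item \emph{Nilpotent case}, $Z=e$. Its kernel on an irreducible module is the highest weight line, so the dimension is exactly $1$.
\item \emph{Semisimple case}, $Z=cH$ with $c\neq0$. Its kernel is the zero weight space. The $H$-weights on $\mathrm{Sym}^{2\ell}\C^2$ are $-2\ell,-2\ell+2,\dots,2\ell$, each of multiplicity one, so the zero weight space is one-dimensional.
\end{itemize}
Since conjugation by $\mathrm{SL}(2,\C)$ acts on $V_\ell^*$ by invertible maps, kernel dimensions are preserved. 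Hence $\dim\ker\rho_\ell^*(Z)\le 1$ in all cases, with equality in fact. This also covers the ``$\mathcal H_\ell=\{0\}$'' alternative trivially.

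Strict pseudoconvexity, meaning $[Z,\overline Z]\notin\Span(Z,\overline Z)$, is not needed for the bound. It enters only through the analytic identification of $\mathcal H$ with the $L^2$ kernel of $R_Z$.
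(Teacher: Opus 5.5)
Your proposal is correct and follows essentially the same route as the paper. Both arguments write $\mathcal H_\ell=V_\ell\otimes\ker d\pi_\ell^*(Z)$ on each Peter--Weyl block and bound the kernel using the nilpotent/semisimple dichotomy in $\mathfrak{sl}(2,\C)$, with the identification of Hardy-space vectors with CR functions as the only analytic input. Your small deviations are sound: you use the self-duality $V_\ell^*\simeq V_\ell$ instead of the paper's transpose identity $d\pi_\ell^*(Z)=-d\pi_\ell(Z)^T$, you spell out the distributional block decomposition, and you correctly note that the kernel is always exactly one-dimensional, so the alternative $\mathcal H_\ell=\{0\}$ never occurs.
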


\begin{proof}
Since $\dim G=3$, the CR dimension is one, so $\mathfrak q=\C Z$ for some nonzero $Z\in\mathfrak g_\C$.  The smooth vectors in the corresponding Hardy space are precisely the smooth CR functions and therefore satisfy
\[
  R_Zf=0.
\]
Every vector in a Peter--Weyl block is smooth.  Since right differentiation commutes with the left regular action, the intersection $\mathcal H_\ell$ is a left $G$-submodule and is the kernel of $R_Z$ on that block.

We now identify the block with $V_\ell\otimes V_\ell^*$ in the standard way. Under
this identification, the left regular action is on the first factor, while the right
regular action is on the second factor through the contragredient representation.
Differentiating the right action gives
\[
R_Z\big|_{V_\ell\otimes V_\ell^*}
=
\Id_{V_\ell}\otimes d\pi_\ell^*(Z).
\]
Hence
\[
\mathcal H_\ell
=
\ker\!\left(\Id_{V_\ell}\otimes d\pi_\ell^*(Z)\right)
=
V_\ell\otimes \ker d\pi_\ell^*(Z).
\]
Therefore the lemma reduces to showing that
\[
\dim \ker d\pi_\ell^*(Z)\le 1
\]
for every nonzero $Z\in \mathfrak g_\C$ and every irreducible representation
$\pi_\ell$. The differential of the contragredient representation is
\[
  d\pi_\ell^*(Z)=-d\pi_\ell(Z)^T.
\]
Transpose and multiplication by $-1$ preserve rank, so $d\pi_\ell^*(Z)$ and $d\pi_\ell(Z)$ have the same nullity.  It is therefore enough to prove
\[
  \dim\ker d\pi_\ell(Z)\le1.
\]

Now $\mathfrak g_\C\simeq \mathfrak{sl}_2(\C)$. Every nonzero element of
$\mathfrak{sl}_2(\C)$ is $\mathrm{Ad}(\mathrm{SL}_2(\C))$-conjugate either to a nonzero nilpotent
element or to a nonzero semisimple element. Accordingly there are two cases.

Assume first that $Z$ is nilpotent. Then $Z$ is $\mathrm{Ad}(\mathrm{SL}_2(\C))$-conjugate to the
standard raising operator
\[
E=
\begin{pmatrix}
0&1\\
0&0
\end{pmatrix}.
\]
Conjugating $d\pi_\ell(Z)$ by the induced intertwiner does not change the kernel
dimension, so it is enough to compute for $d\pi_\ell(E)$. In the irreducible
$\mathfrak{sl}_2$-module $V_\ell$ of dimension $2\ell+1$, one may choose a weight
basis
\[
v_{-\ell},v_{-\ell+1},\dots,v_{\ell}
\]
such that
\[
d\pi_\ell(E)v_m = c_m\,v_{m+1},
\qquad
c_m\neq 0 \ \text{for}\ m<\ell,
\qquad
d\pi_\ell(E)v_\ell=0.
\]
Thus
\[
\ker d\pi_\ell(E)=\C v_\ell
\]
is one-dimensional. Hence $\dim\ker d\pi_\ell(Z)=1$ when $Z$ is nilpotent.

Assume next that $Z$ is semisimple. Then $Z$ is $\mathrm{Ad}(\mathrm{SL}_2(\C))$-conjugate to
$aH$ for some $a\in \C^\times$, where
\[
H=
\begin{pmatrix}
1&0\\
0&-1
\end{pmatrix}.
\]
Again, conjugation does not change kernel dimension, so it is enough to compute
for $a\,d\pi_\ell(H)$. In the same weight basis,
\[
d\pi_\ell(H)v_m = 2m\,v_m,
\qquad
m=-\ell,-\ell+1,\dots,\ell.
\]
Each weight space is one-dimensional.  Since $\ell\in\N_0$ for an $\SO(3)$ representation, the weight $0$ occurs exactly once.  Consequently
\[
\dim\ker d\pi_\ell(Z)\le 1
\]
also in the semisimple case.

We have proved that $\ker d\pi_\ell^*(Z)$ is either $\{0\}$ or a complex line.
Choosing any nonzero vector $v_\ell\in \ker d\pi_\ell^*(Z)$ when this kernel is
nontrivial, we obtain
\[
\mathcal H_\ell
=
V_\ell\otimes \ker d\pi_\ell^*(Z)
=
V_\ell\otimes \C v_\ell.
\]
This proves the claim. The case $\ell=0$ is included automatically, since
$V_0\simeq \C$ and $d\pi_0(Z)=0$, so $\mathcal H_0=V_0\otimes V_0^*$ is
one-dimensional.
\end{proof}

\section{Explicit CR operators for the Hopf and Grauert structures}

\subsection{The Hopf CR operator}

The CR-structure coming from $\mathcal O(2)$ is the horizontal lift of the complex structure on $\CP^1\simeq \SO(3)/\SO(2)$. Under our identifications, the corresponding $(0,1)$ operator is the right-invariant operator
\begin{equation}
\label{eq:Lbar-hopf}
\overline L_h=R_{E_1+\ii E_2}.
\end{equation}
At the identity, the contact distribution is $\Span\{E_1,E_2\}$ and the induced complex structure $J_h$ satisfies $J_h(E_1)=E_2$ and $J_h(E_2)=-E_1$. It follows that $E_1+\ii E_2$ spans the $(-\ii)$ eigenspace, hence determines the $(0,1)$ operator.
Proposition \ref{prop:hopf-kernel} gives an equivalent verification by identifying $\ker\dd\pi_\ell(E_1+\ii E_2)$ with the highest weight line.

\medskip

\subsection{The Grauert-tube CR operator}

The Grauert-tube boundary of the round sphere is homogeneous, hence its transported CR structure on $\SO(3)$ is left-invariant. We compute the corresponding complex line in $\mathfrak g_\C$.

\begin{proposition}[Grauert $(0,1)$-generator]
\label{prop:Lbar-grauert}
Let $\tau>0$ and set $\alpha=\Coth\tau$.  Under the identification
\[
  \SO(3)\simeq S^1\Sph^2,
  \qquad
  g\longmapsto (ge_3,ge_1),
\]
the transported Grauert-CR structure has contact plane
\[
  H_\tau|_e=\Span\{E_1,E_3\}
\]
and complex structure
\[
  J_\tau(E_1)=-\alpha E_3,
  \qquad
  J_\tau(E_3)=\alpha^{-1}E_1.
\]
Consequently its $(0,1)$ line at the identity is spanned by
\[
  E_1-\ii\alpha E_3,
\]
and the corresponding left-invariant CR operator on functions is
\begin{equation}
\label{eq:Lbar-grauert}
  \overline L_\tau=R_{E_1-\ii\alpha E_3}.
\end{equation}
\end{proposition}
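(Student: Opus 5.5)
The plan is to compute everything explicitly in the quadric model $Q^2\subset\C^3$, where the adapted complex structure is the restriction of the ambient complex structure of $\C^3$. For the round unit sphere and a unit tangent vector $v\perp x$, the complexified geodesic is $t\mapsto \cos t\,x+\sin t\,v$. Its imaginary-time continuation therefore gives
\[
  \exp_x^{\C}(\ii\tau v)=\cosh\tau\,x+\ii\sinh\tau\,v .
\]
This satisfies $z\cdot z=\cosh^2\tau-\sinh^2\tau=1$, so it lies in $Q^2$; I will take this as the standard description of the Grauert tube of $\Sph^2$ from \cite{GuilleminStenzel,LempertSzoke}. Composing with $g\mapsto(ge_3,ge_1)$ gives the embedding
\[
  \Phi_\tau(g)=g\bigl(\cosh\tau\,e_3+\ii\sinh\tau\,e_1\bigr)\in\partial M_\tau .
\]
Since $\SO(3)$ acts on $Q^2$ by complex-linear maps, $\Phi_\tau(hg)=h\,\Phi_\tau(g)$. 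The transported CR structure is therefore invariant under left translation, and it suffices to compute it at $g=e$.

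At the identity I differentiate along the curves $\exp(tE_j)$. The derivative is $E_j(\cosh\tau\,e_3+\ii\sinh\tau\,e_1)$, and the matrices \eqref{eq:so3-matrices} give
\[
  \dd\Phi_\tau(E_1)=-\cosh\tau\,e_2,\qquad
  \dd\Phi_\tau(E_2)=\cosh\tau\,e_1-\ii\sinh\tau\,e_3,\qquad
  \dd\Phi_\tau(E_3)=\ii\sinh\tau\,e_2 .
\]
The CR plane is $T\partial M_\tau\cap \ii\,T\partial M_\tau$. Multiplying by $\ii$ gives
\[
  \ii\,\dd\Phi_\tau(E_1)=-\ii\cosh\tau\,e_2=-\alpha\,\dd\Phi_\tau(E_3),
  \qquad
  \ii\,\dd\Phi_\tau(E_3)=-\sinh\tau\,e_2=\alpha^{-1}\dd\Phi_\tau(E_1).
\]
Hence $\Span\{E_1,E_3\}$ is $J$-invariant. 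It is two-dimensional, so it is the contact plane $H_\tau|_e$, and $J_\tau$ has exactly the stated form. As a consistency check, $\ii\,\dd\Phi_\tau(E_2)$ has a real $e_3$-component not matched by the $e_2$-directions, so $E_2$ is transverse to $H_\tau$. This agrees with the Remark that the Reeb direction is the geodesic flow, which is generated by the rotation $E_2$ in the $(e_3,e_1)$-plane.

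The $(0,1)$ line is the $-\ii$ eigenspace of $J_\tau$ on $H_\tau\otimes\C$, namely $X+\ii J_\tau X$. Taking $X=E_1$ gives $E_1-\ii\alpha E_3$. A direct check confirms the eigenvalue:
\[
  J_\tau(E_1-\ii\alpha E_3)=-\alpha E_3-\ii E_1=-\ii(E_1-\ii\alpha E_3).
\]
Left-invariance then means the $(0,1)$ vector field is the left-invariant field generated by this element. In the notation fixed before Lemma~\ref{lem:mult1} and used for \eqref{eq:Lbar-hopf}, this field acts on functions as $R_{E_1-\ii\alpha E_3}$ (differentiation $f\mapsto\frac{\dd}{\dd t}f(g\exp tZ)$), which gives \eqref{eq:Lbar-grauert}.

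I expect the main obstacle to be conventions rather than computation. Three points need care: that the adapted complex structure really is the ambient one on $Q^2$ under $\exp^{\C}$; the sign of $J$, i.e.\ whether the $(0,1)$ vectors are the $-\ii$ or $+\ii$ eigenvectors; and the paper's use of $R_X$ for the operator $f\mapsto\frac{\dd}{\dd t}f(g\exp tX)$. To guard against a sign slip, I will check against Lemma~\ref{lem:mult1} and Theorem~\ref{thm:main}. The kernel of $d\pi_\ell^*(E_1-\ii\alpha E_3)$ must be nonzero for every $\ell$, which requires $\alpha>1$ to land in the right regime, and it should produce the stated $w_\ell(\tau)$ involving $\Coth\tau$.
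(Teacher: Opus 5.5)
Your proposal is correct and follows the paper's proof: the same quadric embedding $\Phi_\tau(g)=gz_\tau$, the same derivatives at the identity, and $J_\tau$ read off from multiplication by $\ii$. The one variation is that you identify $H_\tau|_e$ as $T\partial M_\tau\cap\ii\,T\partial M_\tau$ by $\ii$-invariance and a dimension count, rather than as the kernel of the canonical one-form $X\mapsto\langle e_1,Xe_3\rangle$ (a slightly more self-contained route). One aside on your planned sanity check: by the proof of Lemma~\ref{lem:mult1}, $d\pi_\ell^*(Z)$ has a one-dimensional kernel in every odd-dimensional $V_\ell$ for any nonzero $Z$, so a nonzero kernel does not require $\alpha>1$.
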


\begin{proof}
Use the quadric model
\[
  Q^2=\{z\in\C^3:z\cdot z=1\}
\]
and the radius-$\tau$ boundary point
\[
  z_\tau=\ii\sinh\tau\,e_1+\cosh\tau\,e_3.
\]
The radius-$\tau$ complexification map, written on $S^1\Sph^2$, is
\[
  \Phi_\tau(g)=gz_\tau
  =\ii\sinh\tau\,ge_1+\cosh\tau\,ge_3.
\]
At the identity,
\[
  d\Phi_\tau(X)=Xz_\tau
  =\ii\sinh\tau\,Xe_1+\cosh\tau\,Xe_3.
\]
The matrices in \eqref{eq:so3-matrices} give
\[
E_1e_3=-e_2,
\quad E_1e_1=0,
\quad E_2e_3=e_1,
\quad E_2e_1=-e_3,
\quad E_3e_3=0,
\quad E_3e_1=e_2.
\]
Hence
\[
  d\Phi_\tau(E_1)=-\cosh\tau\,e_2,
  \qquad
  d\Phi_\tau(E_3)=\ii\sinh\tau\,e_2,
  \qquad
  d\Phi_\tau(E_2)=\cosh\tau\,e_1-\ii\sinh\tau\,e_3.
\]
The contact form on $S^1\Sph^2$ is the restriction of the canonical one-form, up to a positive scalar depending only on the radius.  At $(e_3,e_1)$ it is proportional to
\[
  (\delta x,\delta v)\longmapsto \langle e_1,\delta x\rangle.
\]
For the infinitesimal motion induced by $X\in\mathfrak{so}(3)$, this value is
\[
  \langle e_1,Xe_3\rangle.
\]
Therefore it vanishes on $E_1$ and $E_3$, and takes the value $1$ on $E_2$.  The contact plane is $\Span\{E_1,E_3\}$ and the Reeb direction is $E_2$.

Multiplication by $\ii$ in the quadric tangent space gives the CR complex structure.  Since
\[
  \ii\,d\Phi_\tau(E_1)
  =-\ii\cosh\tau\,e_2
  =-\Coth\tau\,d\Phi_\tau(E_3),
\]
and
\[
  \ii\,d\Phi_\tau(E_3)
  =-\sinh\tau\,e_2
  =\tanh\tau\,d\Phi_\tau(E_1),
\]
transporting back to $\Span\{E_1,E_3\}$ gives
\[
  J_\tau(E_1)=-\Coth\tau\,E_3,
  \qquad
  J_\tau(E_3)=\tanh\tau\,E_1.
\]
The $(-\ii)$-eigenspace of this complex structure is spanned by
\[
  E_1+\ii J_\tau(E_1)=E_1-\ii\Coth\tau\,E_3.
\]
This is the claimed $(0,1)$ generator.
\end{proof}

\begin{remark}[Two Reeb directions]
The Hopf and Grauert structures use different contact forms under the common identification with $\SO(3)$.  The Hopf Reeb direction is $E_3$ and its contact plane is $\Span\{E_1,E_2\}$.  The Grauert Reeb direction is $E_2$ and its contact plane is $\Span\{E_1,E_3\}$.  Section~\ref{sec:BF-leading} compares their linearized horizontal structures after identifying each with the standard Heisenberg plane.
\end{remark}

\section{Borel--Weil models and CR-kernel lines}

\subsection{The Borel-Weil realization of \texorpdfstring{$V_\ell$}{V ell}}

Let $n=2\ell$.  The irreducible $\SO(3)$ module $V_\ell$ is of real type and hence is linearly self-dual.  We fix an $\SO(3)$-equivariant unitary identification $V_\ell^*\simeq V_\ell$ and realize this common representation as the space of homogeneous polynomials of degree $n$ in $(z_0,z_1)\in\C^2$.
\[
  V_\ell^*\simeq V_\ell\simeq\mathrm{Sym}^{n}\C^2.
\]
Under this identification, the contragredient differential on the multiplicity factor is represented by the operators below.
In the affine chart $w=z_1/z_0$, a homogeneous polynomial $P(z_0,z_1)=z_0^n p(w)$ corresponds to a polynomial $p(w)$ of degree at most $n$.

We use the standard basis $E_+,E_-,H$ of $\mathfrak{sl}(2,\C)$ with commutators
\[
[H,E_+]=2E_+,\qquad [H,E_-]=-2E_-,\qquad [E_+,E_-]=H.
\]
In the affine model, these act by differential operators
\begin{equation}
\label{eq:sl2-affine}
  E_+=\partial_w,\qquad
  E_-=-w^2\partial_w+n w,\qquad
  H=-2w\partial_w+n.
\end{equation}

\medskip

\subsection{The identification \texorpdfstring{$\mathfrak{so}(3)_\C\simeq\mathfrak{sl}(2,\C)$}{so(3)C is sl(2)C}}

The complexification $\mathfrak g_\C$ is isomorphic to $\mathfrak{sl}(2,\C)$. We record an explicit choice compatible with \eqref{eq:so3-bracket}.

\begin{lemma}
\label{lem:so3-sl2}
Define elements of $\mathfrak g_\C$ by
\[
X_+=E_1+\ii E_2,\qquad X_-=-E_1+\ii E_2,\qquad X_0=2\ii E_3.
\]
Then $X_+,X_-,X_0$ satisfy the $\mathfrak{sl}(2,\C)$ commutation relations
\[
[X_0,X_+]=2X_+,\qquad [X_0,X_-]=-2X_-,\qquad [X_+,X_-]=X_0.
\]
\end{lemma}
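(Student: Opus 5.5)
The plan is to verify the three commutators directly from the $\mathfrak{so}(3)$ relations \eqref{eq:so3-bracket}, using only bilinearity of the complexified bracket. No representation theory is needed.

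\textbf{Step 1.} We first compute $[X_0,X_+]$. Since $X_0=2\ii E_3$ and $X_+=E_1+\ii E_2$,
\[
  [X_0,X_+]=2\ii\bigl([E_3,E_1]+\ii[E_3,E_2]\bigr)=2\ii\bigl(E_2-\ii E_1\bigr)=2E_1+2\ii E_2=2X_+,
\]
where we have used $[E_3,E_1]=E_2$ and $[E_3,E_2]=-E_1$.

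\textbf{Step 2.} In the same way,
\[
  [X_0,X_-]=2\ii\bigl(-[E_3,E_1]+\ii[E_3,E_2]\bigr)=2\ii\bigl(-E_2-\ii E_1\bigr)=2E_1-2\ii E_2=-2X_-.
\]

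\textbf{Step 3.} For the last relation we expand
\[
  [X_+,X_-]=[E_1+\ii E_2,\,-E_1+\ii E_2]=\ii[E_1,E_2]-\ii[E_2,E_1]=2\ii[E_1,E_2]=2\ii E_3=X_0.
\]
The diagonal terms $[E_1,E_1]$ and $[E_2,E_2]$ vanish.

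There is no real obstacle here. The only care needed is with signs and factors of $\ii$, since the normalization $X_-=-E_1+\ii E_2$ (rather than $E_1-\ii E_2$) is exactly what makes $[X_+,X_-]=+X_0$. As an optional check, one can observe that the relations imply $X_0,X_\pm$ span $\mathfrak g_\C$. Indeed, $E_1=(X_+-X_-)/2$, $E_2=(X_++X_-)/(2\ii)$ and $E_3=X_0/(2\ii)$, so the assignment $H\mapsto X_0$, $E_\pm\mapsto X_\pm$ is an isomorphism $\mathfrak{sl}(2,\C)\simeq\mathfrak g_\C$.
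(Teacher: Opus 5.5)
Your proposal is correct and follows the paper's own proof: both verify the three commutators directly from the $\mathfrak{so}(3)$ relations by bilinearity, and your sign and $\ii$ bookkeeping matches. Your closing remark, that $X_0,X_\pm$ span $\mathfrak g_\C$ so the assignment is an isomorphism, is a harmless addition; the paper asserts this right after the lemma.
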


\begin{proof}
Using \eqref{eq:so3-bracket},
\[
 [X_0,X_+]
 =2\ii[E_3,E_1+\ii E_2]
 =2\ii(E_2-\ii E_1)=2(E_1+\ii E_2)=2X_+.
\]
Similarly,
\[
 [X_0,X_-]
 =2\ii[E_3,-E_1+\ii E_2]
 =2\ii(-E_2-\ii E_1)=-2(-E_1+\ii E_2)=-2X_-.
\]
Finally,
\[
 [X_+,X_-]
 =[E_1+\ii E_2,-E_1+\ii E_2]
 =\ii[E_1,E_2]-\ii[E_2,E_1]=2\ii E_3=X_0.
\]
\end{proof}

Lemma \ref{lem:so3-sl2} determines a Lie algebra isomorphism $\mathfrak g_\C\to\mathfrak{sl}(2,\C)$ by sending $X_\pm\mapsto E_\pm$ and $X_0\mapsto H$. It follows that in the representation $V_\ell$ we may write
\begin{equation}
\label{eq:so3-to-sl2}
\dd\pi_\ell(E_1+\ii E_2)=\dd\pi_\ell(X_+)=\dd\pi_\ell(E_+),\qquad
\dd\pi_\ell(E_3)=\frac{1}{2\ii}\,\dd\pi_\ell(H).
\end{equation}

We endow $\mathrm{Sym}^{n}\C^2$ with the unique $\U(2)$ invariant Hermitian inner product normalized by $\norm{z_0^n}=1$. With respect to the monomial basis,
\begin{equation}
\label{eq:norm-monomials}
  \norm{z_0^{n-k}z_1^k}^2 = \binom{n}{k}^{-1}.
\end{equation}

The beta-function computation on $\Sph^3\subset\C^2$ gives
\[
\int_{\Sph^3}|z_0|^{2(n-k)}|z_1|^{2k}\,d\sigma
=\frac{(n-k)!\,k!}{(n+1)!}\cdot\mathrm{vol}(\Sph^3).
\]
With the normalization $\norm{z_0^n}=1$ one obtains
$\norm{z_0^{n-k}z_1^k}^2/\norm{z_0^n}^2=\binom{n}{k}^{-1}$.

\medskip

\subsection{Kernel lines for the Hopf and Grauert CR operators}

\begin{proposition}[Hopf-kernel line]
\label{prop:hopf-kernel}
Let $\overline L_h$ be as in \eqref{eq:Lbar-hopf}. Then in $V_\ell\simeq\mathrm{Sym}^{2\ell}\C^2$ one has
\[
  \ker\dd\pi_\ell(E_1+\ii E_2)=\C\,z_0^{2\ell}.
\]
In particular one may take $u_\ell=z_0^{2\ell}$.
\end{proposition}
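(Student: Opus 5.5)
The plan is to use the dictionary \eqref{eq:so3-to-sl2} together with the affine model \eqref{eq:sl2-affine}. By Lemma~\ref{lem:so3-sl2} and the isomorphism $X_\pm\mapsto E_\pm$, $X_0\mapsto H$, we have $\dd\pi_\ell(E_1+\ii E_2)=\dd\pi_\ell(X_+)$, which acts on $\mathrm{Sym}^{2\ell}\C^2$ as the operator $E_+$. This reduces the claim to computing the kernel of $E_+$ in the Borel--Weil model.

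In the affine chart $w=z_1/z_0$ a homogeneous polynomial $P=z_0^{2\ell}p(w)$ corresponds to $p$ with $\deg p\le 2\ell$, and $E_+$ acts as $\partial_w$. The equation $\partial_w p=0$ forces $p$ to be constant, so the kernel is $\C\cdot 1$, which corresponds to $\C z_0^{2\ell}$. As a consistency check, $H=-2w\partial_w+2\ell$ acts on $1$ with eigenvalue $2\ell$, so this is the highest weight line. This matches the nilpotent case of Lemma~\ref{lem:mult1}, where the kernel of a raising operator is exactly the highest weight line.

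The only delicate point is bookkeeping. The Hardy space is the kernel of $R_Z$, which acts on the multiplicity factor through $\dd\pi_\ell^*(Z)=-\dd\pi_\ell(Z)^T$. The proposition, however, is phrased for $\dd\pi_\ell$ after the self-duality $V_\ell^*\simeq V_\ell$. The paper's convention is that the operators in \eqref{eq:sl2-affine} already represent the contragredient differential on the multiplicity factor under this fixed identification. So it suffices to check two things: the kernel is nontrivial, since $Z=X_+$ is nilpotent, and $Z$ is identified with $E_+$ rather than $E_-$ under the chosen isomorphism. I expect this convention check to be the main obstacle.

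If the identification instead sent $Z$ to $E_-=-w^2\partial_w+2\ell w$, then solving $E_-p=0$ would give $p=w^{2\ell}$, that is, the line $\C z_1^{2\ell}$. One would then relabel $z_0\leftrightarrow z_1$, which is an equivariant unitary change that preserves the norm normalization \eqref{eq:norm-monomials}. Either way the kernel is a single line spanned by a highest weight monomial, and with the stated convention it is $\C z_0^{2\ell}$. We may therefore take $u_\ell=z_0^{2\ell}$.
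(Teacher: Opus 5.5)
Your argument is correct and is essentially the paper's proof: by Lemma~\ref{lem:so3-sl2} and \eqref{eq:so3-to-sl2}, $\dd\pi_\ell(E_1+\ii E_2)$ acts as $E_+=\partial_w$ in the affine model, so its kernel is the constants, i.e.\ $\C z_0^{2\ell}$. Your paragraph on the hypothetical $E_-$ case is unnecessary under the paper's fixed convention $X_\pm\mapsto E_\pm$. It is also slightly inaccurate, since the swap $z_0\leftrightarrow z_1$ is unitary but not $\SO(3)$-equivariant (by Schur only scalars are); this does not affect the main argument.
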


\begin{proof}
By \eqref{eq:so3-to-sl2} and \eqref{eq:sl2-affine}, $\dd\pi_\ell(E_1+\ii E_2)$ acts as $\partial_w$ on the affine polynomial $p(w)$. The kernel consists of constants, hence the corresponding homogeneous polynomial is $z_0^{2\ell}$.
\end{proof}

\begin{proposition}[Grauert-kernel line]
\label{prop:grauert-kernel}
Let $\overline L_\tau$ be as in \eqref{eq:Lbar-grauert} and set $\alpha=\Coth\tau$. Then in $V_\ell\simeq\mathrm{Sym}^{2\ell}\C^2$ one has
\begin{equation}
\label{eq:w-ell}
  \ker\dd\pi_\ell(E_1-\ii\alpha E_3)=\C\,(z_0^2+z_1^2+2\alpha z_0z_1)^\ell.
\end{equation}
In particular one may take
\[
  w_\ell(\tau)=(z_0^2+z_1^2+2\Coth\tau\,z_0z_1)^\ell.
\]
\end{proposition}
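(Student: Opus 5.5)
The plan is to express $E_1-\ii\alpha E_3$ in the $\mathfrak{sl}(2,\C)$ basis of Lemma~\ref{lem:so3-sl2}. Then \eqref{eq:sl2-affine} turns $\dd\pi_\ell(E_1-\ii\alpha E_3)$ into a first-order differential operator on polynomials $p(w)$ of degree at most $n=2\ell$. The kernel can then be read off by solving a linear ODE. This parallels the proof of Proposition~\ref{prop:hopf-kernel}, which needed only $E_+=\partial_w$.

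\emph{Step 1 (rewrite the generator).} From $X_+=E_1+\ii E_2$ and $X_-=-E_1+\ii E_2$ we get $E_1=\tfrac12(X_+-X_-)$. From $X_0=2\ii E_3$ we get $E_3=X_0/(2\ii)$, so $\ii\alpha E_3=\tfrac{\alpha}{2}X_0$. Hence
\[
  E_1-\ii\alpha E_3=\tfrac12\bigl(X_+-X_--\alpha X_0\bigr)\longmapsto \tfrac12\bigl(E_+-E_--\alpha H\bigr).
\]

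\emph{Step 2 (the differential operator).} Substituting \eqref{eq:sl2-affine} gives
\[
  2\,\dd\pi_\ell(E_1-\ii\alpha E_3)\,p
  =\bigl(1+2\alpha w+w^2\bigr)p'(w)-n\,(w+\alpha)\,p(w).
\]

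\emph{Step 3 (solve).} On any disc avoiding the two roots of $q(w)=1+2\alpha w+w^2$, the equation $p'=\tfrac n2\,\tfrac{q'}{q}\,p$ is a regular first-order linear ODE. Its solution space is one-dimensional and spanned by $q^{n/2}=q^\ell$. The roots are $-\alpha\pm\sqrt{\alpha^2-1}$, which are real and distinct since $\alpha=\Coth\tau>1$, so such discs exist.

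A polynomial in the kernel restricts to such a solution on the disc. By the identity theorem it is therefore a scalar multiple of $q^\ell$. Conversely, $q^\ell$ has degree $2\ell=n$, so it lies in the space of polynomials of degree at most $n$, and a direct check shows it is annihilated by the operator. Hence the kernel is exactly $\C\,q^\ell$.

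Homogenizing via $P=z_0^{n}p(z_1/z_0)$ gives $(z_0^2+2\alpha z_0z_1+z_1^2)^\ell$, which is \eqref{eq:w-ell}.

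\emph{Main obstacle.} The computation itself is routine. The point needing care is conventions: the operators \eqref{eq:sl2-affine} must represent the action on the multiplicity factor $V_\ell^*$ through the fixed identification $V_\ell^*\simeq V_\ell$. This is the same convention used in Proposition~\ref{prop:hopf-kernel}, where $\dd\pi_\ell(E_1+\ii E_2)=\partial_w$ produced $z_0^{2\ell}$. As long as both kernels are computed with the same dictionary \eqref{eq:so3-to-sl2}, the pair $(u_\ell,w_\ell(\tau))$ is consistently placed. That consistency is all that Theorem~\ref{thm:main} uses.
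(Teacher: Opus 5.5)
Your proposal is correct and follows essentially the same route as the paper: you rewrite $E_1-\ii\alpha E_3$ as $\tfrac12(E_+-E_--\alpha H)$, obtain the operator $(1+2\alpha w+w^2)\partial_w-n(w+\alpha)$ on the affine model, and solve the resulting first-order ODE to get $p=C(1+2\alpha w+w^2)^\ell$ before homogenizing. The only cosmetic difference is that the paper writes the equation as $\bigl(p/D_\alpha^\ell\bigr)'=0$, whereas you use uniqueness for a regular linear ODE on a disc together with the identity theorem; the two arguments amount to the same thing.
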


\begin{proof}
Let $n=2\ell$.  Using \eqref{eq:so3-to-sl2},
\[
\dd\pi_\ell(E_1-\ii\alpha E_3)
=\frac12(E_+-E_-)-\frac{\alpha}{2}H.
\]
In the affine model \eqref{eq:sl2-affine}, this operator sends $p(w)$ to
\[
\left(\frac{1+w^2}{2}+\alpha w\right)p'(w)-\frac n2(w+\alpha)p(w).
\]
Thus $p$ lies in the kernel precisely when
\begin{equation}
\label{eq:ode-grauert}
  (1+w^2+2\alpha w)p'(w)-n(w+\alpha)p(w)=0.
\end{equation}
Let
\[
  D_\alpha(w)=1+w^2+2\alpha w.
\]
Since $D_\alpha'(w)=2(w+\alpha)$ and $n=2\ell$, equation \eqref{eq:ode-grauert} is
\[
  D_\alpha(w)p'(w)-\ell D_\alpha'(w)p(w)=0.
\]
On the open set where $D_\alpha\ne0$ this is
\[
  \left(\frac{p(w)}{D_\alpha(w)^\ell}\right)'=0.
\]
Hence $p(w)=C D_\alpha(w)^\ell$ on that open set, and therefore everywhere by polynomial identity.  Homogenizing gives
\[
  z_0^{2\ell}D_\alpha(z_1/z_0)^\ell
  =(z_0^2+z_1^2+2\alpha z_0z_1)^\ell.
\]
This proves \eqref{eq:w-ell}.  Uniqueness of the line follows from Lemma~\ref{lem:mult1}.
\end{proof}

\section{Norms, Legendre polynomials, and overlap coefficients}

The Legendre polynomial enters through the squared-binomial sum
\[
  \sum_{j=0}^{\ell}\frac{\binom{\ell}{j}^2}{\binom{2\ell}{2j}}t^j.
\]
The generating function of this sum reduces the norm calculation to a single value of $\Leg_\ell$.

\subsection{Takagi diagonalization of the quadratic form}

Let $\alpha>1$ and consider the complex symmetric matrix
\[
  Q_\alpha=\begin{pmatrix}1&\alpha\\ \alpha&1\end{pmatrix}
\]
so that
\[
  z^TQ_\alpha z=z_0^2+z_1^2+2\alpha z_0z_1.
\]
The Takagi singular values of $Q_\alpha$ are $\alpha+1$ and $\alpha-1$.

\begin{lemma}[Takagi factorization]
\label{lem:takagi}
For $\alpha>1$, the matrix
\[
  U=\frac1{\sqrt2}\begin{pmatrix}1&\ii\\ 1&-\ii\end{pmatrix}
\]
is unitary and satisfies
\begin{equation}
\label{eq:takagi}
  U^TQ_\alpha U=\diag(\alpha+1,\alpha-1).
\end{equation}
\end{lemma}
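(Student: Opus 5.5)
The plan is a direct computation, done column by column. Write $U=(u_1\;u_2)$ with
\[
  u_1=\tfrac1{\sqrt2}(1,1)^T,\qquad u_2=\tfrac{\ii}{\sqrt2}(1,-1)^T .
\]
Note that $U^TQ_\alpha U$ involves the transpose, not the adjoint. So its $(j,k)$ entry is the bilinear pairing $u_j^TQ_\alpha u_k$, with no complex conjugation. Unitarity, by contrast, is a statement about the Hermitian pairing $\overline{u_j}^Tu_k$. I will keep these two pairings separate throughout.

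\emph{Step 1: unitarity.} Check that $\overline{u_1}^Tu_1=\overline{u_2}^Tu_2=1$. The cross term is
\[
  \overline{u_1}^Tu_2=\tfrac{\ii}{2}(1-1)=0 .
\]
So the columns are orthonormal and $U$ is unitary.

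\emph{Step 2: $u_1$ and $u_2$ are eigenvectors of $Q_\alpha$.} Since $Q_\alpha$ is real symmetric with eigenvectors $(1,1)^T$ and $(1,-1)^T$, we get
\[
  Q_\alpha u_1=(\alpha+1)u_1,\qquad Q_\alpha u_2=(1-\alpha)u_2 .
\]

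\emph{Step 3: the entries of $U^TQ_\alpha U$.}
\begin{itemize}
\item Off-diagonal: $u_1^TQ_\alpha u_2=(1-\alpha)\,u_1^Tu_2$, and $u_1^Tu_2=\tfrac{\ii}{2}(1-1)=0$. By symmetry of $Q_\alpha$ the other off-diagonal entry also vanishes.
\item First diagonal entry: $u_1^TQ_\alpha u_1=(\alpha+1)\,u_1^Tu_1=\alpha+1$.
\item Second diagonal entry: $u_2^TQ_\alpha u_2=(1-\alpha)\,u_2^Tu_2$. The bilinear square is $u_2^Tu_2=\tfrac{\ii^2}{2}\cdot 2=-1$, so this entry equals $\alpha-1$.
\end{itemize}

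The factor $\ii$ in the second column is exactly what turns the negative eigenvalue $1-\alpha$ into the nonnegative value $\alpha-1$. The hypothesis $\alpha>1$ is used only at the end: it makes both diagonal entries positive, so that they are genuinely the Takagi singular values $\alpha\pm1$. This gives \eqref{eq:takagi}.

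The only point requiring care is the sign in Step 3, namely that $u_2^Tu_2=-1$ rather than $+1$. This sign distinguishes Takagi factorization from ordinary unitary diagonalization. Everything else is routine.
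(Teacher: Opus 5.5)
Your proof is correct and is essentially the paper's argument: the paper also checks that the columns of $U$ are orthonormal and then computes the bilinear entries $u_j^TQ_\alpha u_k$ by direct multiplication, obtaining $\alpha+1$, $\alpha-1$, and a vanishing mixed term. Your use of the eigenvectors $(1,1)^T$ and $(1,-1)^T$ of $Q_\alpha$, together with the bilinear square $u_2^Tu_2=-1$, just organizes that same multiplication; you are also right that the identity holds for every $\alpha$, and that $\alpha>1$ is needed only so that the diagonal entries are the positive Takagi values.
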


\begin{proof}
The columns of $U$ are orthonormal, and direct multiplication gives
\[
  \frac12(1,1)Q_\alpha\binom{1}{1}=\alpha+1,
  \qquad
  \frac12(\ii,-\ii)Q_\alpha\binom{\ii}{-\ii}=\alpha-1,
\]
and the mixed term vanishes.  This proves \eqref{eq:takagi}.
\end{proof}

\subsection{Exact norm formula}

\begin{proposition}[Norm of the Grauert kernel vector]
\label{prop:norm-formula}
Fix $\ell\ge0$ and set $\alpha=\Coth\tau>1$.
With respect to the normalization $\norm{z_0^{2\ell}}=1$, the vector
\[
 w_\ell(\tau)=(z_0^2+z_1^2+2\alpha z_0z_1)^\ell
\]
has squared norm
\begin{equation}
\label{eq:norm-legendre}
  \norm{w_\ell(\tau)}^2
  = \frac{4^\ell}{\binom{2\ell}{\ell}}\,\Csch^{2\ell}\!\tau\,\Leg_\ell(\cosh 2\tau).
\end{equation}
\end{proposition}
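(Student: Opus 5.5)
The plan is to use the Takagi factorization of Lemma~\ref{lem:takagi} to diagonalize the quadratic form, and then evaluate the resulting squared-binomial sum by a Legendre identity. Because the inner product on $\mathrm{Sym}^{2\ell}\C^2$ is $\U(2)$-invariant, substituting $z=U\zeta$ with the unitary $U$ of Lemma~\ref{lem:takagi} does not change norms. This substitution turns the form into
\[
  z^TQ_\alpha z=(\alpha+1)\zeta_0^2+(\alpha-1)\zeta_1^2.
\]
Consequently
\[
  \norm{w_\ell}^2=\bigl\|\bigl((\alpha+1)\zeta_0^2+(\alpha-1)\zeta_1^2\bigr)^\ell\bigr\|^2 .
\]

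Next we expand by the binomial theorem. This gives
\[
  \sum_{j}\binom{\ell}{j}(\alpha+1)^{\ell-j}(\alpha-1)^j\zeta_0^{2\ell-2j}\zeta_1^{2j}.
\]
The monomials in this sum are orthogonal, and \eqref{eq:norm-monomials} supplies their norms. Together these yield
\[
  \norm{w_\ell}^2=(\alpha+1)^{2\ell}\sum_{j=0}^{\ell}\frac{\binom{\ell}{j}^2}{\binom{2\ell}{2j}}t^j,
  \qquad t=\Bigl(\frac{\alpha-1}{\alpha+1}\Bigr)^2=e^{-4\tau}.
\]
Here we used $\alpha=\Coth\tau$, which gives $(\alpha-1)/(\alpha+1)=e^{-2\tau}$. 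We also have $\alpha+1=e^{\tau}/\sinh\tau$, so the prefactor is $(\alpha+1)^{2\ell}=e^{2\ell\tau}\Csch^{2\ell}\tau$.

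It then remains to prove the identity
\[
  \sum_{j=0}^{\ell}\frac{\binom{\ell}{j}^2}{\binom{2\ell}{2j}}t^j=\frac{4^\ell}{\binom{2\ell}{\ell}}\,t^{\ell/2}\,\Leg_\ell\Bigl(\frac{1+t}{2\sqrt t}\Bigr).
\]
At $t=e^{-4\tau}$ the argument is $\cosh2\tau$ and $t^{\ell/2}=e^{-2\ell\tau}$. The factor $e^{-2\ell\tau}$ cancels the $e^{2\ell\tau}$ in the prefactor and produces exactly \eqref{eq:norm-legendre}. This identity is the main obstacle.

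To prove it, first note that
\[
  \frac{\binom{\ell}{j}^2}{\binom{2\ell}{2j}}=\frac{\binom{2j}{j}\binom{2\ell-2j}{\ell-j}}{\binom{2\ell}{\ell}}.
\]
This follows by writing both sides as ratios of factorials. After this rewriting, the identity becomes the classical expansion
\[
  \Leg_\ell(\cos\theta)=4^{-\ell}\sum_j\binom{2j}{j}\binom{2\ell-2j}{\ell-j}e^{\ii(\ell-2j)\theta},
\]
analytically continued to $e^{\ii\theta}=t^{-1/2}=e^{2\tau}$, i.e.\ $\cos\theta=\cosh2\tau$. The classical expansion itself follows from the generating function
\[
  (1-2xh+h^2)^{-1/2}=(1-he^{\ii\theta})^{-1/2}(1-he^{-\ii\theta})^{-1/2}
\]
by multiplying the two series $\sum_k\binom{2k}{k}(h e^{\pm\ii\theta}/4)^k$ and reading off the coefficient of $h^\ell$. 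Since both sides of that coefficient identity are polynomials, the analytic continuation is immediate.

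As a final check, take $\ell=1$. Then $\norm{w_1}^2=2+\tfrac{4\alpha^2}{2}=2+2\alpha^2$. The formula gives $2\Csch^2\tau\cosh2\tau=2(\cosh^2\tau+\sinh^2\tau)/\sinh^2\tau=2\alpha^2+2$, which agrees.
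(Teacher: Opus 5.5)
Your proof is correct and follows essentially the same route as the paper: Takagi diagonalization via Lemma~\ref{lem:takagi}, orthogonality of monomials with the norms \eqref{eq:norm-monomials}, the factorial identity for $\binom{\ell}{j}^2/\binom{2\ell}{2j}$, and the Legendre generating function. The only difference is cosmetic. The paper reads off the coefficient directly from $((1-4x)(1-4tx))^{-1/2}$ for real $t$, which is your factorization with $h=4\sqrt t\,x$, so it needs no continuation from $\cos\theta$; it also substitutes $\alpha=\Coth\tau$ only at the end, through $\sigma_1\sigma_2=\Csch^2\tau$.
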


\begin{proof}
By Lemma~\ref{lem:takagi} and the $\U(2)$ invariance of the inner product, we may compute the norm after the unitary change of variables which sends the quadratic form to
\[
  \sigma_1z_0^2+\sigma_2z_1^2,
  \qquad
  \sigma_1=\alpha+1,
  \quad
  \sigma_2=\alpha-1.
\]
Expanding gives
\begin{equation}
\label{eq:norm-sum}
\norm{w_\ell(\tau)}^2
=\sum_{j=0}^{\ell}\binom{\ell}{j}^2\sigma_1^{2(\ell-j)}\sigma_2^{2j}\binom{2\ell}{2j}^{-1}.
\end{equation}
The identity
\begin{equation}
\label{eq:comb-identity}
  \frac{\binom{\ell}{j}^2}{\binom{2\ell}{2j}}
  =\frac{(\ell!)^2}{(2\ell)!}\binom{2j}{j}\binom{2(\ell-j)}{\ell-j}
\end{equation}
follows by expanding all binomial coefficients.  Put
\[
  t=\left(\frac{\sigma_2}{\sigma_1}\right)^2.
\]
Then \eqref{eq:norm-sum} becomes
\[
\norm{w_\ell(\tau)}^2
=\frac{(\ell!)^2}{(2\ell)!}\sigma_1^{2\ell}
\sum_{j=0}^{\ell}\binom{2j}{j}\binom{2(\ell-j)}{\ell-j}t^j.
\]
The coefficient of $x^\ell$ in
\[
  \frac1{\sqrt{(1-4x)(1-4tx)}}
\]
is the sum in the last display.  Comparing this generating function with
\[
  \frac1{\sqrt{1-2Xz+z^2}}=\sum_{\ell\ge0}\Leg_\ell(X)z^\ell
\]
gives
\[
\sum_{j=0}^{\ell}\binom{2j}{j}\binom{2(\ell-j)}{\ell-j}t^j
=(4\sqrt t)^\ell\Leg_\ell\left(\frac{1+t}{2\sqrt t}\right).
\]
Hence
\[
\norm{w_\ell(\tau)}^2
=\frac{4^\ell}{\binom{2\ell}{\ell}}(\sigma_1\sigma_2)^\ell
\Leg_\ell\left(\frac{\sigma_1^2+\sigma_2^2}{2\sigma_1\sigma_2}\right).
\]
For $\alpha=\Coth\tau$,
\[
  \sigma_1\sigma_2=\alpha^2-1=\Csch^2\tau,
  \qquad
  \frac{\sigma_1^2+\sigma_2^2}{2\sigma_1\sigma_2}
  =\frac{\alpha^2+1}{\alpha^2-1}=\cosh 2\tau.
\]
Substitution proves \eqref{eq:norm-legendre}.
\end{proof}

\subsection{Overlap and limiting regimes}

\begin{proof}[Proof of the overlap formula in Theorem~\ref{thm:main}]
The weight decomposition for the diagonal circle action on $\C^2$ makes the monomials $z_0^{2\ell-k}z_1^k$ pairwise orthogonal for the $\U(2)$ invariant inner product. Write
\[
w_\ell(\tau)=\sum_{k=0}^{2\ell} a_k\,z_0^{2\ell-k}z_1^k.
\]
Evaluating at $z_1=0$ gives $w_\ell(\tau)(z_0,0)=z_0^{2\ell}$, hence $a_0=1$. Since $u_\ell=z_0^{2\ell}$ has norm one and is orthogonal to all monomials with $k\ge1$, one has
\[
\ip{w_\ell(\tau)}{u_\ell}=a_0\ip{z_0^{2\ell}}{z_0^{2\ell}}=1.
\]
Therefore
\[
\big|\ip{\widehat w_\ell(\tau)}{\widehat u_\ell}\big|=\norm{w_\ell(\tau)}^{-1}.
\]
Taking the reciprocal square root of \eqref{eq:norm-legendre} yields \eqref{eq:overlap-formula}.
\end{proof}

\begin{remark}[Fubini--Study distance]
\label{rem:FS-distance}
The angle
\[
  \theta_\ell(\tau)
  =\arccos\bigl|\ip{\widehat w_\ell(\tau)}{\widehat u_\ell}\bigr|
\]
is the Fubini--Study distance between the two lines in $\mathbb P(V_\ell^*)$, and $s_\ell(\tau)=\cos^2\theta_\ell(\tau)$.  For fixed $\tau$, this squared overlap decays exponentially with $\ell$.  Section~\ref{sec:BF-leading} identifies the limit $r(\tau)^{-2\ell}s_\ell(\tau)\to C_{\BF}(\tau)^2$ with a Bargmann--Fock Gaussian matrix coefficient.
\end{remark}

\begin{proposition}[Limits and small-radius expansion]
\label{prop:tau-limits}
Fix $\ell\ge1$.
As $\tau\downarrow 0$,
\[
\big|\ip{\widehat w_\ell(\tau)}{\widehat u_\ell}\big|
=\frac{\sqrt{\binom{2\ell}{\ell}}}{2^\ell}\,\tau^\ell\Big(1-\frac{\ell(3\ell+2)}{6}\,\tau^2+O(\tau^4)\Big).
\]
As $\tau\to\infty$,
\[
\big|\ip{\widehat w_\ell(\tau)}{\widehat u_\ell}\big|=2^{-\ell}\big(1+O(e^{-2\tau})\big).
\]
\end{proposition}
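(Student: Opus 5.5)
Both expansions will come directly from the closed form \eqref{eq:overlap-formula} of Theorem~\ref{thm:main}, which the proof of the overlap formula identifies with $\norm{w_\ell(\tau)}^{-1}$ via Proposition~\ref{prop:norm-formula}. The plan is to expand the two $\tau$-dependent factors $\sinh^\ell\tau$ and $\Leg_\ell(\cosh 2\tau)^{-1/2}$ separately in each regime. The prefactor $\sqrt{\binom{2\ell}{\ell}}/2^\ell$ is carried along unchanged. No further representation theory is needed.

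\emph{Small radius.} As $\tau\downarrow0$ we have $\sinh^\ell\tau=\tau^\ell\bigl(1+\tfrac{\ell}{6}\tau^2+O(\tau^4)\bigr)$. We also have $\cosh2\tau=1+2\tau^2+O(\tau^4)$. Taylor expanding $\Leg_\ell$ about $X=1$ and using the standard values $\Leg_\ell(1)=1$ and $\Leg_\ell'(1)=\ell(\ell+1)/2$ gives
\[
\Leg_\ell(\cosh2\tau)=1+\ell(\ell+1)\tau^2+O(\tau^4).
\]
The value $\Leg_\ell'(1)$ follows from the Legendre differential equation at $X=1$ or from the generating function. Hence $\Leg_\ell(\cosh2\tau)^{-1/2}=1-\tfrac{\ell(\ell+1)}{2}\tau^2+O(\tau^4)$. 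Multiplying the two expansions, the $\tau^2$ coefficient is
\[
\frac{\ell}{6}-\frac{\ell(\ell+1)}{2}=-\frac{\ell(3\ell+2)}{6}.
\]
The remainder is $O(\tau^4)$ rather than $O(\tau^3)$ for a parity reason. After removing $\tau^\ell$, both $\sinh^\ell\tau/\tau^\ell$ and $\Leg_\ell(\cosh 2\tau)$ are even analytic functions of $\tau$. So there is no odd-order term.

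\emph{Large radius.} We use the leading coefficient $\binom{2\ell}{\ell}/2^\ell$ of $\Leg_\ell$ and the fact that $\Leg_\ell$ has the parity of $\ell$. Together these give $\Leg_\ell(X)=\frac{\binom{2\ell}{\ell}}{2^\ell}X^\ell\bigl(1+O(X^{-2})\bigr)$ as $X\to\infty$. With $X=\cosh2\tau=\tfrac12e^{2\tau}(1+e^{-4\tau})$ this becomes $\Leg_\ell(\cosh2\tau)=\binom{2\ell}{\ell}4^{-\ell}e^{2\ell\tau}\bigl(1+O(e^{-4\tau})\bigr)$. Similarly, $\sinh^\ell\tau=2^{-\ell}e^{\ell\tau}\bigl(1+O(e^{-2\tau})\bigr)$. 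Substituting into \eqref{eq:overlap-formula}, the factors $\sqrt{\binom{2\ell}{\ell}}$, $e^{\ell\tau}$ and one power of $2^{-\ell}$ cancel, leaving $2^{-\ell}\bigl(1+O(e^{-2\tau})\bigr)$. The dominant error comes from $\sinh\tau$.

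\emph{Main obstacle.} The only point that needs care is the value $\Leg_\ell'(1)=\ell(\ell+1)/2$ and getting the $\tau^2$ coefficient exactly right. I would cross-check the result at $\ell=1$. There $\Leg_1(\cosh 2\tau)=\cosh 2\tau$, so the overlap is $\tfrac{1}{\sqrt2}\sinh\tau/\sqrt{\cosh2\tau}$, whose $\tau^2$ coefficient after removing $\tfrac{1}{\sqrt2}\tau$ is $\tfrac16-1=-\tfrac56$. This agrees with $\ell(3\ell+2)/6=5/6$.
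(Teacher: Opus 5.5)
Your proposal is correct and takes essentially the same route as the paper. Both start from the closed overlap formula and expand $\sinh^\ell\tau$ and $\Leg_\ell(\cosh 2\tau)^{-1/2}$ separately, using $\Leg_\ell'(1)=\ell(\ell+1)/2$ for small $\tau$ and the leading coefficient $\binom{2\ell}{\ell}/2^\ell$ of $\Leg_\ell$ for large $\tau$. The differences are minor: you get $\Leg_\ell'(1)$ from the Legendre equation rather than the three-term recurrence, and your parity argument justifies the $O(\tau^4)$ remainder more economically than the paper's explicit fourth-order expansion.
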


\begin{proof}
Start from \eqref{eq:overlap-formula}.
As $\tau\downarrow 0$ one has
\[
\sinh\tau=\tau+\frac{\tau^3}{6}+O(\tau^5),
\qquad
\cosh 2\tau=1+2\tau^2+\frac{2}{3}\tau^4+O(\tau^6).
\]
Set $x=\cosh 2\tau$.
Since $\Leg_\ell$ is analytic at $x=1$ and $\Leg_\ell(1)=1$,
\[
\Leg_\ell(x)=1+\Leg_\ell'(1)(x-1)+O\big((x-1)^2\big).
\]
The derivative at $1$ is
\begin{equation}
\label{eq:Pprime1}
\Leg_\ell'(1)=\frac{\ell(\ell+1)}{2}.
\end{equation}
To verify \eqref{eq:Pprime1}, differentiate the three-term recurrence
\[
(\ell+1)\Leg_{\ell+1}(x)=(2\ell+1)x\Leg_\ell(x)-\ell\Leg_{\ell-1}(x)
\]
and evaluate at $x=1$ using $\Leg_k(1)=1$.
Writing $a_\ell=\Leg_\ell'(1)$ gives
\[
(\ell+1)a_{\ell+1}=(2\ell+1)(1+a_\ell)-\ell a_{\ell-1}.
\]
With $a_0=0$ and $a_1=1$, the identity $a_\ell=\ell(\ell+1)/2$ follows by induction.

Since $x-1=2\tau^2+\frac{2}{3}\tau^4+O(\tau^6)$, and
\[
\Leg_\ell''(1)=\frac{\ell(\ell-1)(\ell+1)(\ell+2)}{8},
\]
one obtains
\[
\Leg_\ell(\cosh 2\tau)=1+\ell(\ell+1)\tau^2+\frac{\ell(\ell+1)(3\ell^2+3\ell-2)}{12}\tau^4+O(\tau^6).
\]
Consequently
\[
\Leg_\ell(\cosh 2\tau)^{-1/2}=1-\frac{\ell(\ell+1)}{2}\tau^2+\frac{\ell(\ell+1)(3\ell^2+3\ell+1)}{12}\tau^4+O(\tau^6).
\]
Also
\[
\sinh^\ell\!\tau=\tau^\ell\left(1+\frac{\ell}{6}\tau^2+\frac{\ell(5\ell-2)}{360}\tau^4+O(\tau^6)\right).
\]
Substituting into \eqref{eq:overlap-formula} gives
\[
\frac{\sinh^\ell\!\tau}{\sqrt{\Leg_\ell(\cosh 2\tau)}}
=\tau^\ell\left(1-\frac{\ell(3\ell+2)}{6}\tau^2+\frac{\ell(90\ell^3+150\ell^2+95\ell+28)}{360}\tau^4+O(\tau^6)\right).
\]
Multiplying by $\sqrt{\binom{2\ell}{\ell}}/2^\ell$ yields the stated small $\tau$ expansion.

As $\tau\to\infty$ one has
\[
\sinh\tau=\frac12 e^{\tau}\big(1+O(e^{-2\tau})\big),
\qquad
\cosh 2\tau=\frac12 e^{2\tau}\big(1+O(e^{-4\tau})\big).
\]
For fixed $\ell$, the leading term of the Legendre polynomial is
\begin{equation}
\label{eq:leg-leading}
\Leg_\ell(x)=\frac{\binom{2\ell}{\ell}}{2^\ell}\,x^\ell+O(x^{\ell-1})
\qquad \text{as } x\to\infty.
\end{equation}
Substituting $x=\cosh 2\tau$ into \eqref{eq:leg-leading} gives
\[
\Leg_\ell(\cosh 2\tau)=\frac{\binom{2\ell}{\ell}}{2^\ell}\,\cosh^\ell(2\tau)\big(1+O(e^{-2\tau})\big).
\]
Taking square roots and substituting into \eqref{eq:overlap-formula} yields
\[
\big|\ip{\widehat w_\ell(\tau)}{\widehat u_\ell}\big|
=2^{-\ell/2}\,\frac{\sinh^\ell\!\tau}{\cosh^{\ell/2}(2\tau)}\,\big(1+O(e^{-2\tau})\big).
\]
Using $\cosh(2\tau)=2\sinh^2\!\tau+1$ gives
\[
\frac{\sinh^2\!\tau}{\cosh(2\tau)}=\frac12\Big(1+O(e^{-2\tau})\Big).
\]
Therefore
\[
2^{-\ell/2}\,\frac{\sinh^\ell\!\tau}{\cosh^{\ell/2}(2\tau)}
=2^{-\ell/2}\,\Big(\frac{\sinh^2\!\tau}{\cosh(2\tau)}\Big)^{\ell/2}
=2^{-\ell}\big(1+O(e^{-2\tau})\big)
\]
and the claimed limit follows.
\end{proof}

\begin{remark}[Limit as $\tau\to0$]
As $\tau\downarrow0$, the coefficient $\alpha=\Coth\tau$ diverges, and
\[
  (2\alpha)^{-\ell}\bigl(z_0^2+z_1^2+2\alpha z_0z_1\bigr)^\ell
  \longrightarrow (z_0z_1)^\ell.
\]
Thus the Grauert line converges projectively to the weight-zero line, which is orthogonal to the highest-weight line $\C z_0^{2\ell}$.
\end{remark}

\subsection{Large-\texorpdfstring{$\ell$}{ell} asymptotics}

\begin{corollary}[Large $\ell$ asymptotics]
\label{cor:asymptotics}
Fix $\tau>0$.
As $\ell\to\infty$,
\[
\big|\ip{\widehat w_\ell(\tau)}{\widehat u_\ell}\big|
= (2\sinh 2\tau)^{1/4}\,e^{-\tau/2}\,\Big(\frac{1-e^{-2\tau}}{2}\Big)^\ell\big(1+o(1)\big).
\]
\end{corollary}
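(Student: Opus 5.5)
The plan is to apply standard large-degree asymptotics to the two $\ell$-dependent factors in the exact formula \eqref{eq:overlap-formula} of Theorem~\ref{thm:main}. Squaring, it is enough to show
\[
  s_\ell(\tau)=\frac{\binom{2\ell}{\ell}}{4^\ell}\,\frac{\sinh^{2\ell}\!\tau}{\Leg_\ell(\cosh 2\tau)}
  = \sqrt{2\sinh 2\tau}\,e^{-\tau}\,r(\tau)^{2\ell}\bigl(1+o(1)\bigr),
\]
and then take square roots. Two inputs are needed. The first is the central binomial asymptotic
\[
  \binom{2\ell}{\ell}4^{-\ell}=(\pi\ell)^{-1/2}\bigl(1+O(\ell^{-1})\bigr),
\]
which follows from Stirling's formula. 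The second is the Laplace--Heine asymptotic for Legendre polynomials off the interval $[-1,1]$: for $\eta>0$,
\[
  \Leg_\ell(\cosh\eta)=\frac{e^{(\ell+1/2)\eta}}{\sqrt{2\pi\ell\sinh\eta}}\bigl(1+O(\ell^{-1})\bigr).
\]
This will be applied with $\eta=2\tau$.

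To justify the second input, I will use the Laplace integral
\[
  \Leg_\ell(x)=\frac1\pi\int_0^\pi\bigl(x+\sqrt{x^2-1}\cos\varphi\bigr)^\ell\,\dd\varphi ,\qquad x>1 .
\]
With $x=\cosh\eta$, the integrand is $\exp\bigl(\ell\log(\cosh\eta+\sinh\eta\cos\varphi)\bigr)$. Its phase is strictly maximized at the endpoint $\varphi=0$, where the base equals $e^\eta$. Near $\varphi=0$ one has
\[
  \log(\cosh\eta+\sinh\eta\cos\varphi)=\eta-\tfrac12\sinh\eta\,e^{-\eta}\varphi^2+O(\varphi^4).
\]
The half-Gaussian integral then gives
\[
  \Leg_\ell(\cosh\eta)\sim\frac{e^{\ell\eta}}{\pi}\sqrt{\frac{\pi}{2\ell\,\sinh\eta\,e^{-\eta}}}
  =\frac{e^{(\ell+1/2)\eta}}{\sqrt{2\pi\ell\sinh\eta}} .
\]
The contribution of $\varphi$ bounded away from $0$ is exponentially smaller, since there the base is strictly less than $e^\eta$. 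This Laplace-method step is the only genuinely analytic part of the argument, so I expect it to be the main obstacle. It is standard, and if I prefer not to redo it I would simply cite the classical Laplace--Heine formula.

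Finally, I substitute both inputs into $s_\ell$:
\[
  s_\ell(\tau)\sim(\pi\ell)^{-1/2}\sinh^{2\ell}\!\tau\,\sqrt{2\pi\ell\sinh 2\tau}\;e^{-(2\ell+1)\tau}
  =\sqrt{2\sinh2\tau}\,e^{-\tau}\bigl(\sinh\tau\,e^{-\tau}\bigr)^{2\ell}.
\]
The factors of $\ell^{1/2}$ cancel, as they must. The base simplifies because
\[
  \sinh\tau\,e^{-\tau}=\frac{1-e^{-2\tau}}2=r(\tau).
\]
Taking the positive square root yields $c_\ell(\tau)=(2\sinh2\tau)^{1/4}e^{-\tau/2}r(\tau)^\ell(1+o(1))$. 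The same argument in fact gives the error $O_\tau(\ell^{-1})$ claimed in \eqref{eq:intro-factorization}.
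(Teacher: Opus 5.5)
Your proposal is correct and follows essentially the same route as the paper: square the exact formula~\eqref{eq:overlap-formula}, insert Stirling's asymptotic for $\binom{2\ell}{\ell}4^{-\ell}$ and the off-cut Legendre asymptotic $\Leg_\ell(\cosh\eta)\sim e^{(\ell+1/2)\eta}/\sqrt{2\pi\ell\sinh\eta}$ with $\eta=2\tau$, simplify using $\sinh\tau\,e^{-\tau}=r(\tau)$, and take square roots. The paper simply cites the Legendre asymptotic at this point. Your Laplace-integral sketch is correct and matches the paper's own endpoint Laplace expansion in Proposition~\ref{prop:sell-expansion}.
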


\begin{proof}
Write \eqref{eq:overlap-formula} in the form
\begin{equation}
\label{eq:overlap-squared}
\big|\ip{\widehat w_\ell(\tau)}{\widehat u_\ell}\big|^2
=\frac{\binom{2\ell}{\ell}}{4^\ell}\,\frac{\sinh^{2\ell}\!\tau}{\Leg_\ell(\cosh 2\tau)}.
\end{equation}
Stirling's formula gives
\begin{equation}
\label{eq:central-binomial}
\frac{\binom{2\ell}{\ell}}{4^\ell}=\frac{1}{\sqrt{\pi\ell}}\big(1+o(1)\big).
\end{equation}
Set $\eta=2\tau$ and $x=\cosh\eta>1$.
The classical Szeg\H{o} asymptotic for Legendre polynomials off the cut gives
\begin{equation}
\label{eq:leg-szego}
\Leg_\ell(\cosh\eta)=\frac{e^{(\ell+1/2)\eta}}{\sqrt{2\pi\ell\sinh\eta}}\big(1+o(1)\big)
\qquad \text{as } \ell\to\infty.
\end{equation}
Substituting \eqref{eq:central-binomial} and \eqref{eq:leg-szego} into \eqref{eq:overlap-squared} yields
\begin{align*}
\big|\ip{\widehat w_\ell(\tau)}{\widehat u_\ell}\big|^2
&= \frac{1}{\sqrt{\pi\ell}}\,\sinh^{2\ell}\!\tau\,\sqrt{2\pi\ell\sinh 2\tau}\,e^{-(2\ell+1)\tau}\big(1+o(1)\big) \\
&= \sqrt{2\sinh 2\tau}\,e^{-\tau}\,\Big(\frac{\sinh\tau}{e^{\tau}}\Big)^{2\ell}\big(1+o(1)\big) \\
&= \sqrt{2\sinh 2\tau}\,e^{-\tau}\,\Big(\frac{1-e^{-2\tau}}{2}\Big)^{2\ell}\big(1+o(1)\big).
\end{align*}
Taking square roots yields the stated formula.
\end{proof}

\begin{remark}[Exponential decay rate]
\label{rem:complexification-ratio}
The exponential decay rate in Corollary~\ref{cor:asymptotics} is the constant $r(\tau)$ of \eqref{eq:intro-factorization}.  It increases from $0$ to $1/2$ as $\tau$ runs from $0$ to $\infty$, and it is related to the squeeze parameter $b_\tau$ of Section~\ref{sec:BF-leading} by $r(\tau)=(1-b_\tau)/2$.
\end{remark}

\subsection{Monotonicity in \texorpdfstring{$\ell$}{ell}}

\begin{proposition}
\label{prop:monotonicity}
Fix $\tau>0$. The overlap $\big|\ip{\widehat w_\ell(\tau)}{\widehat u_\ell}\big|$ is strictly decreasing in $\ell$ for $\ell\ge1$.
\end{proposition}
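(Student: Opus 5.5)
I would compare consecutive terms directly, using the squared form \eqref{eq:overlap-squared} of the overlap formula from Theorem~\ref{thm:main}. Set $x=\cosh 2\tau>1$, so that $\sinh^2\tau=(x-1)/2$. Since $\binom{2\ell+2}{\ell+1}/\binom{2\ell}{\ell}=\frac{2(2\ell+1)}{\ell+1}$, the ratio of consecutive eigenvalues is
\[
  \frac{s_{\ell+1}(\tau)}{s_\ell(\tau)}
  =\frac{2\ell+1}{2\ell+2}\cdot\frac{x-1}{2}\cdot\frac{\Leg_\ell(x)}{\Leg_{\ell+1}(x)} .
\]
Once we know $\Leg_\ell(x)>0$, the claim $s_{\ell+1}<s_\ell$ is equivalent to the polynomial inequality
\[
  (2\ell+1)(x-1)\Leg_\ell(x)<4(\ell+1)\Leg_{\ell+1}(x),\qquad x>1 .
\]
Strict decrease of $c_\ell=\sqrt{s_\ell}$ then follows.

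The second step is to eliminate $\Leg_{\ell+1}$ with the three-term recurrence $(\ell+1)\Leg_{\ell+1}=(2\ell+1)x\Leg_\ell-\ell\Leg_{\ell-1}$, already used in the proof of Proposition~\ref{prop:tau-limits}. Substituting it in, the inequality becomes
\[
  4\ell\,\Leg_{\ell-1}(x)<(2\ell+1)(3x+1)\Leg_\ell(x).
\]
For $x>1$ we have $3x+1>4$ and $2\ell+1>\ell$. So it suffices to know that $0<\Leg_{\ell-1}(x)\le\Leg_\ell(x)$ for $x>1$ and $\ell\ge1$.

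The third step, which I expect to be the only real work, is this monotonicity of $\ell\mapsto\Leg_\ell(x)$ on $x>1$, together with positivity. I would prove both by a joint induction on $\ell$ from the recurrence. The base case is $\Leg_0=1$ and $\Leg_1=x>1$. For the inductive step, assume $0<\Leg_{\ell-1}\le\Leg_\ell$. Then
\[
  (\ell+1)(\Leg_{\ell+1}-\Leg_\ell)=(2\ell+1)x\Leg_\ell-\ell\Leg_{\ell-1}-(\ell+1)\Leg_\ell\ge(2\ell+1)(x-1)\Leg_\ell>0 .
\]
This gives $\Leg_{\ell+1}>\Leg_\ell>0$. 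Alternatively, one could invoke the Laplace integral $\Leg_\ell(x)=\pi^{-1}\int_0^\pi(x+\sqrt{x^2-1}\cos\varphi)^\ell\,\dd\varphi$, but the recurrence argument is self-contained.

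Combining the three steps gives $s_{\ell+1}(\tau)<s_\ell(\tau)$ for every $\ell\ge1$. The same computation with $\ell=0$ even covers $c_1<c_0=1$.
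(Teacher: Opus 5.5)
Your proof is correct, and its skeleton matches the paper's. Both arguments compute the consecutive ratio $s_{\ell+1}/s_\ell=\frac{2\ell+1}{2\ell+2}\cdot\frac{x-1}{2}\cdot\frac{\Leg_\ell(x)}{\Leg_{\ell+1}(x)}$, and both then prove a Legendre inequality by induction on the three-term recurrence.

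The difference is the auxiliary inequality. The paper proves the stronger growth bound $\Leg_{\ell+1}(x)>x\,\Leg_\ell(x)$ for $\ell\ge1$ and inserts it directly into the ratio. This gives the explicit uniform bound $s_{\ell+1}/s_\ell<(x-1)/(2x)$, a quantitative geometric decay. Within that proof, positivity of $\Leg_\ell$ on $x>1$ is taken for granted; the paper cites it only elsewhere.

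You instead use the recurrence to eliminate $\Leg_{\ell+1}$, which reduces the claim to $4\ell\,\Leg_{\ell-1}<(2\ell+1)(3x+1)\Leg_\ell$. This inequality has enough slack that the weaker monotonicity $\Leg_{\ell-1}\le\Leg_\ell$ suffices. Proving that monotonicity jointly with positivity makes your argument fully self-contained, and it handles the step $\ell=0$ as well. As written, however, your route yields only strict decrease, not the paper's explicit ratio bound.
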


\begin{proof}
Set $x=\cosh(2\tau)>1$ and $s=\sinh^2\!\tau=(x-1)/2$.
From \eqref{eq:overlap-formula},
\[
\big|\ip{\widehat w_\ell(\tau)}{\widehat u_\ell}\big|^2
=\binom{2\ell}{\ell}\,\frac{s^\ell}{4^\ell\Leg_\ell(x)}.
\]
Write this quantity as $R_\ell$.  Then
\[
\frac{R_{\ell+1}}{R_\ell}
=\frac{\binom{2\ell+2}{\ell+1}}{\binom{2\ell}{\ell}}\,\frac{s}{4}\,\frac{\Leg_\ell(x)}{\Leg_{\ell+1}(x)}
=\frac{2\ell+1}{\ell+1}\,\frac{s}{2}\,\frac{\Leg_\ell(x)}{\Leg_{\ell+1}(x)}.
\]

We claim that
\[
\Leg_{\ell+1}(x)>x\,\Leg_\ell(x)
\qquad\text{for every }\ell\ge1\text{ and }x>1.
\]
For $\ell=1$ this is immediate from
\[
\Leg_2(x)-x\Leg_1(x)=\frac{x^2-1}{2}>0.
\]
Assume inductively that $\Leg_\ell(x)>x\,\Leg_{\ell-1}(x)$ for some $\ell\ge1$.  Then
\[
\Leg_{\ell-1}(x)<\frac{1}{x}\Leg_\ell(x),
\]
and the three-term recurrence gives
\[
(\ell+1)\Leg_{\ell+1}(x)
=(2\ell+1)x\Leg_\ell(x)-\ell \Leg_{\ell-1}(x)
>
\left((2\ell+1)x-\frac{\ell}{x}\right)\Leg_\ell(x).
\]
Since
\[
\left((2\ell+1)x-\frac{\ell}{x}\right)-(\ell+1)x
=\ell\left(x-\frac1x\right)
=\frac{\ell(x^2-1)}{x}>0,
\]
we obtain $(\ell+1)\Leg_{\ell+1}(x)>(\ell+1)x\Leg_\ell(x)$, and therefore
\[
\Leg_{\ell+1}(x)>x\Leg_\ell(x).
\]
The claim follows by induction.

Consequently
\[
\frac{\Leg_\ell(x)}{\Leg_{\ell+1}(x)}<\frac{1}{x},
\]
so
\[
\frac{R_{\ell+1}}{R_\ell}
<
\frac{2\ell+1}{\ell+1}\,\frac{s}{2x}
<
\frac{x-1}{2x}
<1.
\]
Hence $R_{\ell+1}<R_\ell$ for every $\ell\ge1$, which proves the monotonicity of the overlap.
\end{proof}

\begin{proposition}[Asymptotic ratio]
\label{prop:sharp-ratio}
Let $R_\ell=s_\ell(\tau)=\big|\ip{\widehat w_\ell(\tau)}{\widehat u_\ell}\big|^2$.  Then, as $\ell\to\infty$,
\[
\frac{R_{\ell+1}}{R_\ell}
=
r(\tau)^2\left(1+\frac{c(\tau)}{\ell^2}+\mathcal O_\tau(\ell^{-3})\right),
\qquad
c(\tau)=\frac{\Coth(2\tau)-1}{8}.
\]
\end{proposition}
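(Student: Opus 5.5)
We take the ratio formula already derived in the proof of Proposition~\ref{prop:monotonicity}. With $x=\cosh 2\tau$ and $s=\sinh^2\tau$ it reads
\[
\frac{R_{\ell+1}}{R_\ell}=\frac{2\ell+1}{\ell+1}\,\frac{s}{2}\,\frac{\Leg_\ell(x)}{\Leg_{\ell+1}(x)}.
\]
The plan is to replace the leading-order Szeg\H{o} asymptotic \eqref{eq:leg-szego} by the complete Laplace--Heine expansion of $\Leg_\nu(\cosh\eta)$ off the cut, with $\eta=2\tau$. In its standard form this is
\[
\Leg_\nu(\cosh\eta)=\frac{\Gamma(\nu+1)}{\Gamma(\nu+\tfrac32)}\,\frac{e^{(\nu+\frac12)\eta}}{\sqrt{2\pi\sinh\eta}}\,S_\nu(\eta),
\qquad
S_\nu(\eta)=\sum_{k\ge0}\frac{\bigl((\tfrac12)_k\bigr)^2}{k!\,(\nu+\tfrac32)_k}\Bigl(\frac{e^{\eta}}{2\sinh\eta}\Bigr)^{k}.
\]
It can be derived from the Laplace integral $\Leg_\nu(\cosh\eta)=\pi^{-1}\int_0^\pi(\cosh\eta+\sinh\eta\cos\phi)^\nu\,\dd\phi$ by Watson's lemma at $\phi=0$. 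The expansion is asymptotic, and convergent for $\eta>\log(1+\sqrt2)$. Only the first two terms and an $O(\nu^{-2})$ remainder are needed, and Watson's lemma supplies these uniformly in $\nu$ for fixed $\eta>0$.

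Substituting this expansion, the ratio splits into three factors. The first is the Gamma factor:
\[
\frac{\Gamma(\ell+1)\,\Gamma(\ell+\tfrac52)}{\Gamma(\ell+\tfrac32)\,\Gamma(\ell+2)}=\frac{\ell+\tfrac32}{\ell+1}.
\]
Combined with $(2\ell+1)/(\ell+1)$ this gives
\[
\frac{(2\ell+1)(2\ell+3)}{2(\ell+1)^2}=2\Bigl(1-\frac{1}{4(\ell+1)^2}\Bigr).
\]
The second is the exponential factor $e^{-\eta}=e^{-2\tau}$. Together with $2\cdot s/2$ it gives $\sinh^2\tau\,e^{-2\tau}=r(\tau)^2$, which confirms the main term. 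The third factor is the series ratio $S_\ell/S_{\ell+1}$. Writing $a=\tfrac14\cdot e^{\eta}/(2\sinh\eta)=1/\bigl(4(1-e^{-2\eta})\bigr)$, we have $S_\nu=1+a/(\nu+\tfrac32)+O(\nu^{-2})$. The ratio of the first-order terms then gives
\[
\frac{S_\ell}{S_{\ell+1}}=1+\frac{a}{(\ell+\tfrac32)(\ell+\tfrac52)}+O(\ell^{-3}),
\]
provided the $k=2$ terms also cancel to this order.

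Multiplying the three factors, the $\ell^{-2}$ coefficient is
\[
a-\tfrac14=\frac14\,\frac{e^{-2\eta}}{1-e^{-2\eta}}=\frac{1}{4(e^{2\eta}-1)}=\frac{\Coth\eta-1}{8}.
\]
With $\eta=2\tau$ this is $c(\tau)$. Shifting $(\ell+1)^{-2}$ and $(\ell+\tfrac32)^{-1}(\ell+\tfrac52)^{-1}$ to $\ell^{-2}$ costs only $O(\ell^{-3})$.

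The main obstacle is controlling the second-order terms. The ratio $S_\ell/S_{\ell+1}$ has first-order part exactly $a\bigl[(\ell+\tfrac32)^{-1}-(\ell+\tfrac52)^{-1}\bigr]$. The $k=2$ term contributes a difference of $1/(\nu+\tfrac32)_2$ at consecutive $\nu$, which is $O(\ell^{-3})$. The square of the first-order term, which appears when expanding the quotient, differs between $\ell$ and $\ell+1$ by $O(\ell^{-3})$ as well. We will check both of these carefully. We will also verify the precise normalization of the Laplace--Heine coefficients, for instance against $\Leg_1(x)=x$ at large $\eta$, or against \eqref{eq:leg-szego}. As an alternative that avoids quoting the expansion, one could set $\rho_\ell=\Leg_{\ell+1}/\Leg_\ell$ and use the three-term recurrence $(\ell+1)\rho_\ell=(2\ell+1)x-\ell/\rho_{\ell-1}$. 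Making the ansatz $\rho_\ell=e^{\eta}\bigl(1+b_1/\ell+b_2/\ell^2+\cdots\bigr)$ and solving order by order would confirm the same $c(\tau)$.
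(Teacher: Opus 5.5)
Your argument is correct and follows the paper's route. Both feed a large-$\ell$ expansion of $\Leg_\ell(\cosh 2\tau)$ with an $O(\ell^{-3})$ remainder into the consecutive ratio, and use that the second-order coefficient cancels, so only the first-order coefficient survives. The paper takes $R_\ell=C_0r^{2\ell}(1+a_1/\ell+a_2/\ell^2+O(\ell^{-3}))$ from Proposition~\ref{prop:sell-expansion} and reads off $c=-a_1$; your $a-\tfrac14$ is the same number. Your Gamma-normalized bookkeeping is a small variant: the binomial and Gamma prefactors telescope exactly to $2\bigl(1-\frac{1}{4(\ell+1)^2}\bigr)$ and $r(\tau)^2$, which leaves $S_\ell/S_{\ell+1}$ as the only asymptotic factor. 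Your coefficient $a=(\Coth\eta+1)/8$ also agrees with \eqref{eq:legendre-refined}, since $\Gamma(\nu+1)/\Gamma(\nu+\tfrac32)=\nu^{-1/2}\bigl(1-\frac{3}{8\nu}+O(\nu^{-2})\bigr)$ and $-\tfrac38+\frac{\Coth\eta+1}{8}=\frac{\Coth\eta-2}{8}$.

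Two statements need correcting.

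First, the claim that only the first two terms and an $O(\nu^{-2})$ remainder are needed is wrong. A remainder $E_\nu=O(\nu^{-2})$ only gives $E_\ell-E_{\ell+1}=O(\ell^{-2})$, which is the same order as $c(\tau)\ell^{-2}$ and would swamp it. You need the expansion through $k=2$ with an $O(\nu^{-3})$ remainder. Your later paragraph on the $k=2$ term tacitly uses exactly this, and Watson's lemma supplies it. The value of the $k=2$ coefficient then drops out, just as $a_2$ does in the paper.

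Second, $S_\nu(\eta)$ does not converge for $\eta>\log(1+\sqrt2)$. Its variable $e^\eta/(2\sinh\eta)=(1-e^{-2\eta})^{-1}$ exceeds $1$ for every $\eta>0$, while the coefficients $\frac{((1/2)_k)^2}{k!(\nu+3/2)_k}$ decay only like $k^{-\nu-3/2}$. The series therefore diverges for all $\eta>0$. This does no harm, because you use it only as an asymptotic expansion truncated after finitely many terms.
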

\begin{proof}
By Proposition~\ref{prop:sell-expansion},
$R_\ell=C_0r^{2\ell}(1+a_1/\ell+a_2/\ell^2+O(\ell^{-3}))$.  Hence
\[
\frac{R_{\ell+1}}{R_\ell}=r^2\frac{1+a_1/(\ell+1)+a_2/(\ell+1)^2+O(\ell^{-3})}{1+a_1/\ell+a_2/\ell^2+O(\ell^{-3})}.
\]
Using $(\ell+1)^{-1}=\ell^{-1}-\ell^{-2}+O(\ell^{-3})$ and $(\ell+1)^{-2}=\ell^{-2}+O(\ell^{-3})$, the numerator equals
$1+a_1/\ell+(a_2-a_1)/\ell^2+O(\ell^{-3})$.  Dividing by the denominator gives
\[
\frac{R_{\ell+1}}{R_\ell}=r^2\left(1-\frac{a_1}{\ell^2}+O(\ell^{-3})\right).
\]
Since $a_1(\tau)=(1-\Coth(2\tau))/8$, the coefficient is $c(\tau)=-a_1(\tau)=(\Coth(2\tau)-1)/8$.
\end{proof}

\begin{table}[ht]
\centering
\begin{tabular}{ccccc}
\toprule
 & $\tau=0.5$ & $\tau=1.0$ & $\tau=1.5$ & $\tau=2.0$\\
\midrule
$\ell=1$ & 0.2966 & 0.4284 & 0.4745 & 0.4908\\
$\ell=2$ & 0.0949 & 0.1857 & 0.2255 & 0.2409\\
$\ell=3$ & 0.0302 & 0.0804 & 0.1072 & 0.1182\\
$\ell=5$ & 0.00303 & 0.0150 & 0.0242 & 0.0285\\
$\ell=10$ & $9.57\times10^{-6}$ & $2.27\times10^{-4}$ & $5.86\times10^{-4}$ & $8.12\times10^{-4}$\\
\bottomrule
\end{tabular}
\caption{Values of $\big|\langle\widehat{w}_\ell(\tau),\widehat{u}_\ell\rangle\big|$ computed from \eqref{eq:overlap-formula}.}
\label{tab:overlap-values}
\end{table}
\begin{remark}[Numerical illustration]\label{rem:numerical}
Table~\ref{tab:overlap-values} illustrates the exponential decay in $\ell$ for fixed $\tau$ and the approach to $2^{-\ell}$ as $\tau\to\infty$ predicted by Corollary~\ref{cor:asymptotics} and Proposition~\ref{prop:tau-limits}.
\end{remark}

\section{The intertwining operator}

\subsection{Block formula}
Let $\Pi_h$ and $\Pi_\tau$ denote the orthogonal Szeg\H{o} projectors onto $H^2(Y)$ and $H^2(\partial M_\tau)$.  On the $\ell$th Peter--Weyl block,
\[
  \Pi_h=\Id_{V_\ell}\otimes\ket{\widehat u_\ell}\bra{\widehat u_\ell},
  \qquad
  \Pi_\tau=\Id_{V_\ell}\otimes\ket{\widehat w_\ell(\tau)}\bra{\widehat w_\ell(\tau)}.
\]
It follows that
\begin{equation}
\label{eq:cross-block}
  \Pi_h\Pi_\tau
  =\ip{\widehat u_\ell}{\widehat w_\ell(\tau)}
   \Id_{V_\ell}\otimes\ket{\widehat u_\ell}\bra{\widehat w_\ell(\tau)}.
\end{equation}

\begin{definition}[Intertwining operator]
\label{def:intertwiner}
Define $\mathcal T_\tau\colon H^2(\partial M_\tau)\to H^2(Y)$ by
\[
  \mathcal T_\tau\bigl(v\otimes\widehat w_\ell(\tau)\bigr)
  =v\otimes\widehat u_\ell.
\]
Its zero extension to $L^2(\SO(3))$ has block formula
\begin{equation}
\label{eq:T-block}
  \mathcal T_\tau\big|_{V_\ell\otimes V_\ell^*}
  =\Id_{V_\ell}\otimes\ket{\widehat u_\ell}\bra{\widehat w_\ell(\tau)}.
\end{equation}
\end{definition}

\begin{proof}[Proof of Theorem~\ref{thm:intertwiner}]
For each $\ell$, both Hardy summands are irreducible left $\SO(3)$ modules isomorphic to $V_\ell$.  Formula~\eqref{eq:T-block} is unitary from $V_\ell\otimes\C\widehat w_\ell(\tau)$ onto $V_\ell\otimes\C\widehat u_\ell$.  Their Hilbert direct sum is therefore a unitary from $H^2(\partial M_\tau)$ onto $H^2(Y)$.  The zero extension is a partial isometry with initial projection $\Pi_\tau$ and final projection $\Pi_h$.  The prescribed value on each normalized multiplicity vector fixes the scalar in Schur's lemma, which proves uniqueness.
\end{proof}

\subsection{Trace class of \texorpdfstring{$\Pi_h\Pi_\tau$}{the product of the Hardy projectors}}
\begin{proposition}
\label{prop:cross-trace-class}
For every $\tau>0$, the operator $\Pi_h\Pi_\tau$ is trace class.  Its singular value on the $\ell$th Hardy summand is $c_\ell(\tau)$ with multiplicity $2\ell+1$.
\end{proposition}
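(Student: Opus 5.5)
The plan is to use the block formula \eqref{eq:cross-block}, which gives the singular value decomposition of $\Pi_h\Pi_\tau$ blockwise, and then to sum the singular values using the exponential decay from Corollary~\ref{cor:asymptotics}.

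First, the projectors. Lemma~\ref{lem:mult1}, together with Propositions~\ref{prop:hopf-kernel} and~\ref{prop:grauert-kernel}, shows that $\Pi_h$ and $\Pi_\tau$ are both block diagonal for the Peter--Weyl decomposition \eqref{eq:PW}. They act on $V_\ell\otimes V_\ell^*$ as $\Id_{V_\ell}\otimes P_{\widehat u_\ell}$ and $\Id_{V_\ell}\otimes P_{\widehat w_\ell(\tau)}$ respectively. We take the orthonormal structure on each block to be the abstract one; by Remark~\ref{rem:normalization} this differs from the $L^2$ structure only by the scalar $(2\ell+1)^{-1}$, which does not affect orthogonality or singular values.

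Next, the singular values on a single block. By \eqref{eq:cross-block}, the restriction of $\Pi_h\Pi_\tau$ to $V_\ell\otimes V_\ell^*$ is $\ip{\widehat u_\ell}{\widehat w_\ell(\tau)}\,\Id_{V_\ell}\otimes\ket{\widehat u_\ell}\bra{\widehat w_\ell(\tau)}$. Choose an orthonormal basis $\{e_m\}$ of $V_\ell$. This operator maps the orthonormal vectors $e_m\otimes\widehat w_\ell(\tau)$ to $c_\ell(\tau)$ times the orthonormal vectors $e_m\otimes\widehat u_\ell$ (up to a unimodular phase). It annihilates $V_\ell\otimes\widehat w_\ell(\tau)^\perp$. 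So its nonzero singular values are $c_\ell(\tau)$, with multiplicity $\dim V_\ell=2\ell+1$. This is nonzero because $\ip{w_\ell}{u_\ell}=1$, as shown in the proof of Theorem~\ref{thm:main}. Since the blocks are mutually orthogonal and invariant, the singular values of the whole operator are the union of the blockwise ones.

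Finally, trace class. It suffices to show $\sum_{\ell\ge0}(2\ell+1)c_\ell(\tau)<\infty$. Corollary~\ref{cor:asymptotics} gives $c_\ell(\tau)\le C(\tau)\,r(\tau)^\ell$ for all large $\ell$, with $r(\tau)=(1-e^{-2\tau})/2<1/2$. The series is therefore dominated by $\sum(2\ell+1)r^\ell<\infty$.

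If one wants to avoid the asymptotic input, there is a cruder route. The proof of Proposition~\ref{prop:monotonicity} gives $R_{\ell+1}/R_\ell<(x-1)/(2x)<1/2$, so $c_\ell\le c_1 2^{-(\ell-1)/2}$, which is also summable against $2\ell+1$. I expect no real obstacle here. The one point needing care is justifying that the orthogonal direct sum of blockwise finite-rank operators, with summable singular values, is trace class on $L^2(\SO(3))$. This follows because the trace norm equals the sum of the singular values, and that sum is the convergent series above.
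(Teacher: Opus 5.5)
Your proposal is correct and follows the paper's proof. The block formula \eqref{eq:cross-block} gives the singular value $c_\ell(\tau)$ with multiplicity $2\ell+1$, and the exponential bound from Corollary~\ref{cor:asymptotics} makes $\sum_{\ell}(2\ell+1)c_\ell(\tau)$ converge. Your non-asymptotic alternative is also valid: it uses the ratio bound $R_{\ell+1}/R_\ell<(x-1)/(2x)<1/2$ from the proof of Proposition~\ref{prop:monotonicity} and reaches the same conclusion.
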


\begin{proof}
Formula~\eqref{eq:cross-block} gives the singular values.  Corollary~\ref{cor:asymptotics} yields $c_\ell(\tau)=O_\tau(r(\tau)^\ell)$ with $0<r(\tau)<1$, and hence
\[
  \sum_{\ell\ge0}(2\ell+1)c_\ell(\tau)<\infty.
\]
\end{proof}

\subsection{Dependence on the Grauert radius}
\begin{theorem}[Monotonicity in the radius]
\label{thm:radius-monotonicity}
For every fixed $\ell\ge1$, the function $\tau\mapsto s_\ell(\tau)$ is strictly increasing on $(0,\infty)$ and maps this interval onto $(0,2^{-2\ell})$.
\end{theorem}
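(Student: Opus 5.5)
We need to show that $s_\ell(\tau)=\binom{2\ell}{\ell}4^{-\ell}\sinh^{2\ell}\!\tau/\Leg_\ell(\cosh2\tau)$ is strictly increasing on $(0,\infty)$, with limits $0$ at $\tau\downarrow0$ and $2^{-2\ell}$ at $\tau\to\infty$. The plan is to rewrite $s_\ell$ as the reciprocal of a finite sum of positive terms, each a power of a single monotone quantity. The norm computation in Proposition~\ref{prop:norm-formula} already gives exactly this, via \eqref{eq:norm-sum}:
\[
  s_\ell(\tau)^{-1}=\norm{w_\ell(\tau)}^2
  =\sum_{j=0}^{\ell}\frac{\binom{\ell}{j}^2}{\binom{2\ell}{2j}}(\alpha+1)^{2(\ell-j)}(\alpha-1)^{2j},
  \qquad \alpha=\Coth\tau .
\]

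We factor out $(\alpha^2-1)^\ell=\Csch^{2\ell}\tau$ and put $q=(\alpha+1)/(\alpha-1)=e^{2\tau}$. This gives
\[
  s_\ell(\tau)^{-1}=\Csch^{2\ell}\!\tau\sum_{j=0}^{\ell}\frac{\binom{\ell}{j}^2}{\binom{2\ell}{2j}}\,q^{\ell-2j}.
\]
Since $\Csch^2\tau=4q/(q-1)^2$, we get
\[
  s_\ell^{-1}=4^\ell\sum_j b_j\,\frac{q^{2\ell-2j}}{(q-1)^{2\ell}},\qquad b_j=\frac{\binom{\ell}{j}^2}{\binom{2\ell}{2j}} .
\]
The terms pair up under $j\leftrightarrow \ell-j$, because $b_j=b_{\ell-j}$. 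Each term $q^{2(\ell-j)}/(q-1)^{2\ell}=\bigl(q^{(\ell-j)/\ell}/(q-1)\bigr)^{2\ell}$ has the form $(q^{a}/(q-1))^{2\ell}$ with $a=(\ell-j)/\ell\in[0,1]$. For $a\le1$ and $q>1$, the derivative of $\log\bigl(q^a/(q-1)\bigr)$ is $a/q-1/(q-1)<0$. So every term is strictly decreasing in $q$, and at least the $j=\ell$ term ($a=0$) is strictly decreasing for $\ell\ge1$. Since $q=e^{2\tau}$ is increasing in $\tau$, $s_\ell^{-1}$ is strictly decreasing in $\tau$, and $s_\ell$ is strictly increasing. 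This termwise monotonicity is the key step. It is cleaner than differentiating the Legendre expression directly, which would need an inequality of the form $\Leg_\ell'(x)(x-1)<\ell\Leg_\ell(x)$. That inequality would be the alternative route, via the expansion of $\Leg_\ell$ in powers of $(x-1)/2$ with positive coefficients.

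For the range, we use continuity together with the two limits. As $\tau\downarrow0$ we have $q\to1$, and every term blows up because of $(q-1)^{-2\ell}$; equivalently, Proposition~\ref{prop:tau-limits} gives $s_\ell\sim\binom{2\ell}{\ell}4^{-\ell}\tau^{2\ell}\to0$. As $\tau\to\infty$, the $j=0$ term tends to $4^\ell\cdot1$ and all other terms tend to $0$, so $s_\ell\to4^{-\ell}=2^{-2\ell}$, matching Proposition~\ref{prop:tau-limits}. Strict monotonicity and continuity then show that the image is exactly the open interval $(0,2^{-2\ell})$. The only real obstacle is bookkeeping: the substitution $q=e^{2\tau}$ has to turn $(\alpha\pm1)$ and $\Csch\tau$ into the stated powers correctly. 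We check this from $\alpha+1=2e^{\tau}/(e^{\tau}-e^{-\tau})$ and $\alpha-1=2e^{-\tau}/(e^{\tau}-e^{-\tau})$, which give $q=e^{2\tau}$ and $(\alpha-1)^2=4/(q-1)^2$. With these, the sum becomes $4^\ell\sum_j b_j\,q^{2(\ell-j)}/(q-1)^{2\ell}$ directly, with no further normalization.
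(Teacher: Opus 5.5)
Your proof is correct, and it takes a different route from the paper's. The paper works with the Legendre form. It applies the identity $(x^2-1)\Leg_\ell'(x)=\ell\bigl(x\Leg_\ell(x)-\Leg_{\ell-1}(x)\bigr)$ at $x=\cosh 2\tau$ to get $\frac{d}{d\tau}\log s_\ell(\tau)=\frac{2\ell}{\sinh 2\tau}\bigl(1+\Leg_{\ell-1}(x)/\Leg_\ell(x)\bigr)>0$. Positivity uses only $\Leg_j>0$ on $(1,\infty)$, and the endpoint values are quoted from Proposition~\ref{prop:tau-limits}. You instead go back to the finite sum \eqref{eq:norm-sum}, before the generating-function step turns it into $\Leg_\ell$, and use that $s_\ell^{-1}$ is a positive combination of strictly monotone terms. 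This needs no Legendre identities or positivity facts. It also gives both endpoint limits directly, since only the $j=0$ term (with $b_0=1$) survives as $\tau\to\infty$. The paper's route gives something yours does not: an exact formula for the logarithmic derivative. Together with $0<\Leg_{\ell-1}(x)<\Leg_\ell(x)$ on $(1,\infty)$, which follows from the inequality $\Leg_{\ell+1}(x)>x\Leg_\ell(x)$ proved in Proposition~\ref{prop:monotonicity}, that formula bounds the derivative between $2\ell/\sinh 2\tau$ and $4\ell/\sinh 2\tau$. Finally, you do not need the substitution $q=e^{2\tau}$ that you flagged as the main bookkeeping risk. In \eqref{eq:norm-sum}, each summand $(\alpha+1)^{2(\ell-j)}(\alpha-1)^{2j}$ is already a product of positive nondecreasing functions of $\alpha\in(1,\infty)$, and it is strictly increasing when $\ell\ge1$. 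Since $\alpha=\Coth\tau$ is strictly decreasing in $\tau$, monotonicity follows at once. The limits are equally immediate: $\alpha\to\infty$ as $\tau\downarrow0$, and $\alpha\to1^+$ as $\tau\to\infty$, which leaves only $(\alpha+1)^{2\ell}\to 4^\ell$.
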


\begin{proof}
Set $x=\cosh 2\tau$.  From \eqref{eq:overlap-squared},
\[
  \log s_\ell(\tau)
  =\log\left(\frac{\binom{2\ell}{\ell}}{4^\ell}\right)
   +2\ell\log\sinh\tau-\log\Leg_\ell(x).
\]
The identity
\[
  (x^2-1)\Leg_\ell'(x)=\ell\bigl(x\Leg_\ell(x)-\Leg_{\ell-1}(x)\bigr)
\]
gives
\begin{equation}
\label{eq:tau-monotone-derivative}
  \frac{d}{d\tau}\log s_\ell(\tau)
  =\frac{2\ell}{\sinh 2\tau}
    \left(1+\frac{\Leg_{\ell-1}(x)}{\Leg_\ell(x)}\right)>0,
\end{equation}
where positivity follows from $\Leg_j(x)>0$ for $x>1$.  Proposition~\ref{prop:tau-limits} gives the endpoint limits $0$ and $2^{-2\ell}$.
\end{proof}

Thus, for any fixed $\ell\ge1$, the single eigenvalue $s_\ell(\tau)$ determines $\tau$ within this one-parameter family, although the round Laplace spectrum itself does not vary.

\section{The operator \texorpdfstring{$\Pi_h\Pi_\tau\Pi_h$}{Pi h Pi tau Pi h}}
\begin{definition}
For the two Hardy projectors, set
\begin{equation}
\label{eq:T-operator}
  \mathsf T_\tau=\Pi_h\Pi_\tau\Pi_h\colon H^2(Y)\longrightarrow H^2(Y).
\end{equation}
\end{definition}

\begin{proposition}[Spectrum of $\Pi_h\Pi_\tau\Pi_h$]
\label{prop:T-spectrum}
The operator $\mathsf T_\tau$ is positive, trace class, and $\SO(3)$ equivariant.  On $V_\ell\otimes\C\widehat u_\ell$ it acts by the scalar
\[
  s_\ell(\tau)=\bigl|\ip{\widehat w_\ell(\tau)}{\widehat u_\ell}\bigr|^2,
\]
which has multiplicity $2\ell+1$.
\end{proposition}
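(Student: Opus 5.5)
The plan is to read everything off the block formulas for the two Szeg\H{o} projectors in the previous section. Both $\Pi_h$ and $\Pi_\tau$ commute with the left $\SO(3)$ action: each Hardy space is a closed left-invariant subspace by Lemma~\ref{lem:mult1}, so its orthogonal projector is equivariant. Hence $\mathsf T_\tau=\Pi_h\Pi_\tau\Pi_h$ is equivariant. Positivity is formal, since $\ip{\mathsf T_\tau f}{f}=\norm{\Pi_\tau\Pi_h f}^2\ge0$ for every $f$, which also shows $\mathsf T_\tau$ is self-adjoint.

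For the spectrum I will restrict to the $\ell$th Peter--Weyl block and compose $\Pi_h=\Id_{V_\ell}\otimes\ket{\widehat u_\ell}\bra{\widehat u_\ell}$ with $\Pi_\tau=\Id_{V_\ell}\otimes\ket{\widehat w_\ell(\tau)}\bra{\widehat w_\ell(\tau)}$. This gives
\[
  \mathsf T_\tau\big|_{V_\ell\otimes V_\ell^*}
  =\ip{\widehat u_\ell}{\widehat w_\ell(\tau)}\ip{\widehat w_\ell(\tau)}{\widehat u_\ell}\,
   \Id_{V_\ell}\otimes\ket{\widehat u_\ell}\bra{\widehat u_\ell}
  =s_\ell(\tau)\,\Pi_h\big|_{V_\ell\otimes V_\ell^*}.
\]
So $\mathsf T_\tau$ acts on $V_\ell\otimes\C\widehat u_\ell$ by the scalar $s_\ell(\tau)$, with multiplicity $\dim V_\ell=2\ell+1$, and it vanishes on the orthogonal complement of $H^2(Y)$. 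By Proposition~\ref{prop:hopf-kernel}, $u_\ell\neq0$ for every $\ell$, so every block contributes. The Peter--Weyl decomposition~\eqref{eq:PW} is orthogonal and complete, so these eigenspaces exhaust $H^2(Y)$ and $\mathsf T_\tau$ is diagonal there.

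For trace class, $\mathsf T_\tau$ is positive, so it suffices to show that its trace $\sum_{\ell\ge0}(2\ell+1)s_\ell(\tau)$ is finite. Alternatively, $\mathsf T_\tau=(\Pi_h\Pi_\tau)(\Pi_h\Pi_\tau)^*$ is a product involving the trace class operator of Proposition~\ref{prop:cross-trace-class} and a bounded operator. In either form the estimate $s_\ell(\tau)=O_\tau(r(\tau)^{2\ell})$ with $0<r(\tau)<1$ from Corollary~\ref{cor:asymptotics} gives convergence.

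The only point needing care is the bookkeeping of the Peter--Weyl normalization. The factor $(2\ell+1)^{-1}$ of Remark~\ref{rem:normalization} only rescales the inner product on each block uniformly, so the rank-one projector formulas, and hence the scalar $s_\ell(\tau)$, are unaffected.
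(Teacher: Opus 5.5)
Your proposal is correct and follows essentially the same route as the paper: positivity from $\ip{\mathsf T_\tau v}{v}=\norm{\Pi_\tau v}^2$, the rank-one block computation giving the scalar $s_\ell(\tau)$ with multiplicity $\dim V_\ell=2\ell+1$, and trace class from the exponential decay $s_\ell(\tau)=O_\tau(r(\tau)^{2\ell})$. Your extra remarks are also correct and make explicit what the paper leaves implicit. These are the equivariance of the two projectors, the factorization $\mathsf T_\tau=(\Pi_h\Pi_\tau)(\Pi_h\Pi_\tau)^*$, and the fact that the Peter--Weyl normalization does not affect the projector formulas.
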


\begin{proof}
For $v\in H^2(Y)$,
\[
  \ip{\mathsf T_\tau v}{v}=\norm{\Pi_\tau v}^2,
\]
so $0\le\mathsf T_\tau\le\Id$.  On the $\ell$th multiplicity line, the product of the three rank-one projectors is
\[
  \ket{\widehat u_\ell}\bra{\widehat u_\ell}
  \ket{\widehat w_\ell(\tau)}\bra{\widehat w_\ell(\tau)}
  \ket{\widehat u_\ell}\bra{\widehat u_\ell}
  =s_\ell(\tau)\ket{\widehat u_\ell}\bra{\widehat u_\ell}.
\]
The multiplicity is $\dim V_\ell=2\ell+1$.  Trace-class summability follows from the exponential bound on $s_\ell(\tau)$.
\end{proof}

\begin{remark}
Because both Hardy spaces are multiplicity-free, $\mathsf T_\tau$ is diagonal on the Hardy decomposition, with eigenvalues $s_\ell(\tau)$.
\end{remark}

\section{Trace formulas with the Hardy projectors}\label{sec:trace-formulas}

\subsection{Notation and phase convention}
Let
\[
  Y=\SO(3),\qquad g_t=\exp(-tE_2).
\]
We define the right-translation operator $V_g^t$ by its Peter--Weyl block action
\[
  V_g^t\big|_{V_\ell\otimes V_\ell^*}
  =\Id_{V_\ell}\otimes\pi_\ell^*(g_t),
  \qquad \ell\in\N_0,
\]
and set $\widetilde V_g^t=e^{\ii t/2}V_g^t$.  The scalar prefactor $e^{\ii t/2}$ multiplies each of the five traces below by the same factor, so it cancels in every comparison between them.  Its only visible effect is the sign $e^{\ii\pi k}=(-1)^k$ at a geodesic period $t=2\pi k$.  The flat trace below is understood as the distributional Abel limit in Theorem~\ref{thm:trace-dirichlet}.

Let \(\widehat u_\ell\in V_\ell^*\) be the normalized Hopf kernel vector and let \(\widehat w_\ell(\tau)\in V_\ell^*\) be the normalized Grauert kernel vector of Proposition~\ref{prop:grauert-kernel}.  We choose their phases so that \(\ip{\widehat u_\ell}{\widehat w_\ell(\tau)}=c_\ell(\tau)>0\), with $c_\ell(\tau)$ given by \eqref{eq:overlap-formula}.

\begin{theorem}[Exact spectral trace series]
\label{thm:trace-dirichlet}
For each fixed \(\tau>0\), the full flat trace and the four traces with Hardy projectors have the following Abel-regularized spectral expansions.

\emph{Full flat trace.}
\begin{equation}\label{eq:C-full-trace-Abel}
  \Tr^{\flat}(\widetilde V_g^t)
  =
  \lim_{\rho\uparrow1}
  e^{\ii t/2}\sum_{\ell=0}^{\infty}
  \rho^\ell(2\ell+1)\,\chi_\ell(g_t)
\end{equation}
where the limit is taken in \(\mathcal D'(\R)\), and
\begin{equation}\label{eq:C-character}
  \chi_\ell(g_t)
  =
  \frac{\sin\bigl((2\ell+1)t/2\bigr)}{\sin(t/2)}
\end{equation}
away from \(2\pi\mathbb Z\), with the usual continuous character value
\(\chi_\ell(e)=2\ell+1\) at periods.

\emph{Hopf diagonal trace.}
\begin{equation}\label{eq:C-Hopf-trace-Abel}
  T_{hh}(t)
  :=
  \Tr(\Pi_h\widetilde V_g^t\Pi_h)
  =
  \lim_{\rho\uparrow1}
  e^{\ii t/2}\sum_{\ell=0}^{\infty}
  \rho^\ell(2\ell+1)\cos^{2\ell}\!\frac t2.
\end{equation}
For \(t\notin2\pi\mathbb Z\), this Abel limit is the ordinary smooth function
\begin{equation}\label{eq:C-Hopf-closed}
  T_{hh}(t)
  =
  e^{\ii t/2}
  \frac{1+\cos^2(t/2)}{\bigl(1-
  \cos^2(t/2)\bigr)^2}.
\end{equation}

\emph{Grauert-diagonal trace.}
\begin{equation}\label{eq:C-Grauert-trace-Abel}
  T_{\tau\tau}(t)
  :=
  \Tr(\Pi_\tau\widetilde V_g^t\Pi_\tau)
  =
  \lim_{\rho\uparrow1}
  e^{\ii t/2}\sum_{\ell=0}^{\infty}
  \rho^\ell(2\ell+1)
  a_{\tau,\ell}(t),
\end{equation}
where
\begin{equation}\label{eq:C-a-tau-def}
  a_{\tau,\ell}(t)
  =
  \ip{\widehat w_\ell(\tau)}{\pi_\ell^*(g_t)\widehat w_\ell(\tau)}.
\end{equation}

\emph{Mixed traces.}
\begin{equation}\label{eq:C-cross-trace-real}
  T_{h\tau}(t)
  :=
  \Tr(\Pi_h\widetilde V_g^t\Pi_\tau)
  =
  e^{\ii t/2}\sum_{\ell=0}^{\infty}
  (2\ell+1)s_\ell(\tau)
  \bigl(1-\Coth\tau\,\sin t\bigr)^\ell,
\end{equation}
with absolute and uniform convergence for \(t\in\R\).  The opposite mixed trace is
\begin{equation}\label{eq:C-cross-opposite-real}
  T_{\tau h}(t)
  :=
  \Tr(\Pi_\tau\widetilde V_g^t\Pi_h)
  =
  e^{\ii t/2}\sum_{\ell=0}^{\infty}
  (2\ell+1)s_\ell(\tau)
  \bigl(1+\Coth\tau\,\sin t\bigr)^\ell,
\end{equation}
again with absolute and uniform convergence on \(\R\).
\end{theorem}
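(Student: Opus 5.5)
The plan is to compute each of the five traces block by block in the Peter--Weyl decomposition \eqref{eq:PW}, using the block formulas for $\Pi_h$ and $\Pi_\tau$ from the section on the intertwining operator and $V_g^t|_{V_\ell\otimes V_\ell^*}=\Id_{V_\ell}\otimes\pi_\ell^*(g_t)$. The general rule is that for rank-one multiplicity projectors,
\[
  \Tr\bigl((\Id\otimes\ket{a}\bra{a})(\Id\otimes\pi_\ell^*(g_t))(\Id\otimes\ket{b}\bra{b})\bigr)
  =(2\ell+1)\,\ip{\pi_\ell^*(g_t)b}{a}\,\ip{a}{b},
\]
while the unprojected block contributes $(2\ell+1)\chi_\ell(g_t)$. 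Summing over $\ell$ with the factor $e^{\ii t/2}$ gives the four series. The Grauert-diagonal formula \eqref{eq:C-Grauert-trace-Abel} is then essentially the definition \eqref{eq:C-a-tau-def}.

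\textbf{Full trace.} For the full trace I use that $\chi_\ell$ is real for $\SO(3)$, so the contragredient has the same character. Since $g_t=\exp(-tE_2)$ is a rotation by angle $t$, the Weyl character formula gives \eqref{eq:C-character}.

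\textbf{Hopf diagonal trace.} The relevant matrix coefficient is $\ip{\pi_\ell^*(g_t)\widehat u_\ell}{\widehat u_\ell}$ of the highest weight vector $z_0^{2\ell}$ under a rotation about an axis orthogonal to $E_3$. I lift $g_t$ to $\SU(2)$ as $\begin{pmatrix}\cos(t/2)&\pm\sin(t/2)\\ \mp\sin(t/2)&\cos(t/2)\end{pmatrix}$. Then the coefficient of $z_0^{2\ell}$ in $g_t\cdot z_0^{2\ell}$ is $\cos^{2\ell}(t/2)$, which is real, so the contragredient sign is irrelevant. With $x=\cos^2(t/2)<1$ off the periods, the series $\sum(2\ell+1)x^\ell=(1+x)/(1-x)^2$ converges absolutely. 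Hence the Abel limit equals \eqref{eq:C-Hopf-closed} pointwise, and locally uniformly away from $2\pi\mathbb Z$.

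\textbf{Mixed traces.} Here I need $\ip{\pi_\ell^*(g_t)w_\ell}{u_\ell}$. Because $u_\ell=z_0^{2\ell}$ is orthogonal to every other monomial, this is the $z_0^{2\ell}$-coefficient of the transformed polynomial $q(g^{\mp1}z)^\ell$, where $q(z)=z_0^2+z_1^2+2\Coth\tau\,z_0z_1$. That coefficient is $q$ evaluated at the first column of the $\SU(2)$ matrix, raised to the $\ell$th power:
\[
  \bigl(\cos^2(t/2)+\sin^2(t/2)\mp2\Coth\tau\sin(t/2)\cos(t/2)\bigr)^\ell=(1\mp\Coth\tau\sin t)^\ell .
\]
Dividing by $\norm{w_\ell}$ and multiplying by $\ip{\widehat u_\ell}{\widehat w_\ell}=\norm{w_\ell}^{-1}$ produces the factor $s_\ell(\tau)$, using $\ip{w_\ell}{u_\ell}=1$ from the proof of Theorem~\ref{thm:main}. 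The opposite mixed trace uses the inverse element, hence the opposite sign. Fixing which sign goes with $T_{h\tau}$ is a bookkeeping check of the contragredient convention $d\pi^*=-d\pi^T$ against $g_t=\exp(-tE_2)$ and the identification of Lemma~\ref{lem:so3-sl2}.

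\textbf{Uniform convergence of the mixed series.} This follows by the Weierstrass M-test. We have $|1\pm\Coth\tau\sin t|\le1+\Coth\tau=2/(1-e^{-2\tau})$, and Corollary~\ref{cor:asymptotics} gives $s_\ell(\tau)\sim C_0r^{2\ell}$. Therefore the terms are $O\bigl(\ell\,(r^2(1+\Coth\tau))^\ell\bigr)=O\bigl(\ell\,r(\tau)^\ell\bigr)$ with $r(\tau)<1$, so no regularization is needed.

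\textbf{Main obstacle.} The hard part is the existence of the distributional Abel limits in \eqref{eq:C-full-trace-Abel} and \eqref{eq:C-Grauert-trace-Abel}.

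For the full trace I will pair with $\varphi\in C_c^\infty$ and expand $\chi_\ell(g_t)=\sum_{|m|\le\ell}e^{\ii mt}$. The $\ell$th term is then $(2\ell+1)\sum_{|m|\le\ell}\widehat\varphi(-m-\tfrac12)$. This is not summable directly, so I reorganize as an Abel limit: summing in $\ell$ first gives the explicit $\rho$-kernel $(1-\rho)\bigl(\ldots\bigr)/(1-2\rho\cos t+\rho^2)^{2}$-type expression, whose distributional limit can be computed directly.

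For $T_{\tau\tau}$ I would expand $\widehat w_\ell$ in the $E_2$-weight basis, writing $a_{\tau,\ell}(t)=\sum_m\abs{\ip{\widehat w_\ell}{e_m}}^2e^{\ii mt}$. The key input is that $E_2$ is the Grauert Reeb field (Proposition~\ref{prop:Lbar-grauert}), and on a CR Hardy space the Reeb derivative is elliptic. Concretely, the kernel equation for $E_1-\ii\alpha E_3$ should force the weights of $\widehat w_\ell$ to concentrate where $\abs m\ge c\,\ell$ with $c=c(\tau)>0$, up to exponentially small mass. Then $\sum_\ell(2\ell+1)\sum_m\abs{\ip{\widehat w_\ell}{e_m}}^2\abs{\widehat\varphi(-m-\tfrac12)}$ converges. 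Hence the Abel limit exists, and in fact the series converges in $\mathcal D'$. Establishing this weight-concentration estimate is the step I expect to need the most work; I would try it first via the Takagi form of Lemma~\ref{lem:takagi} in $E_2$-adapted coordinates.
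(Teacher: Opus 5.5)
Your route is the paper's own: traces of rank-one multiplicity projectors computed block by block in $V_\ell\otimes V_\ell^*$, monomial orthogonality, and geometric domination of the mixed series. Your treatment of $T_{\tau\tau}$ is sharper than the paper's, which only notes $\abs{a_{\tau,\ell}}\le1$. With $\alpha=\Coth\tau$ one has $z_0^2+z_1^2+2\alpha z_0z_1=\frac{1+\alpha}{2}\bigl((z_0+z_1)^2-e^{-2\tau}(z_0-z_1)^2\bigr)$, and $z_0\pm z_1$ is the $E_2$-eigenbasis (the Takagi coordinates of Lemma~\ref{lem:takagi}). So the mass of $\widehat w_\ell$ on the $E_2$-weight line indexed by $j$ is proportional to the $j$th summand of \eqref{eq:norm-sum}. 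That summand is at most $e^{-4\tau j}$ times the $j=0$ summand, so the $T_{\tau\tau}$ series converges in $\mathcal D'(\R)$ without regularization. However, two other steps fail, and the paper's proof fails at the same two places.

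\emph{The lift of $g_t$.} By Lemma~\ref{lem:so3-sl2}, $E_2=(X_++X_-)/(2i)$, so on $\mathrm{Sym}^{2\ell}\C^2$ it acts by $\frac1{2i}(z_0\partial_{z_1}+z_1\partial_{z_0})$, and
\[
  \pi_\ell^*(g_t)P(z_0,z_1)
  =P\bigl(\cos\tfrac t2\,z_0+i\sin\tfrac t2\,z_1,\;
          i\sin\tfrac t2\,z_0+\cos\tfrac t2\,z_1\bigr).
\]
The real rotation you lift to, which the paper also writes for $g_t\cdot(z_0,z_1)$, is the lift of $\exp(\pm tE_1)$, because $E_1=(X_+-X_-)/2$. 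For the Hopf coefficient this is harmless: it is still $\cos^{2\ell}(t/2)$. For the mixed traces the discrepancy is not a sign. With $q(z)=z_0^2+z_1^2+2\Coth\tau\,z_0z_1$, the $z_0^{2\ell}$-coefficient of $\pi_\ell^*(g_t)w_\ell$ is $q(\cos\frac t2,i\sin\frac t2)^\ell=(\cos t+i\Coth\tau\sin t)^\ell$. Moreover $\exp\bigl(\tfrac{it}{2}(E_++E_-)\bigr)$ is complex symmetric in the real orthonormal monomial basis, so both mixed traces get this same coefficient. As a check, fix $\ell$ and let $\tau\to\infty$. Then $w_\ell\to(z_0+z_1)^{2\ell}$, an $E_2$-eigenvector, so $\abs{\ip{\pi_\ell^*(g_t)w_\ell}{u_\ell}}\to1$ for all $t$. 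By contrast, $(1\mp\Coth\tau\sin t)^\ell\to(1\mp\sin t)^\ell$, which vanishes at $t=\pm\pi/2$. Your own $T_{\tau\tau}$ argument already uses the correct action. In the eigenbasis $z_0\pm iz_1$ of the real rotation, whose generator $E_1$ lies in the Grauert contact plane, the weights of $w_\ell$ cluster near $0$ for large $\ell$, so the concentration would fail. Thus \eqref{eq:C-cross-trace-real}--\eqref{eq:C-cross-opposite-real} describe rotation about $E_1$, not $E_2$. Your M-test survives the correction, since $\abs{\cos t+i\Coth\tau\sin t}\le\Coth\tau$ and $r(\tau)^2\Coth\tau=(1-e^{-4\tau})/4<1$.

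\emph{The full flat trace.} The obstacle you single out cannot be overcome, because the limit in \eqref{eq:C-full-trace-Abel} does not exist in $\mathcal D'(\R)$. Summing in $\ell$ first gives
\[
  \sum_{\ell\ge0}\rho^\ell(2\ell+1)\chi_\ell(g_t)
  =\frac{1}{\sin(t/2)}\operatorname{Im}\Bigl[e^{it/2}\frac{1+\rho e^{it}}{(1-\rho e^{it})^2}\Bigr].
\]
This tends to $0$ locally uniformly off $2\pi\mathbb Z$, since the bracket has the real limit $-\cos(t/2)/(2\sin^2(t/2))$ there. But its $m$th Fourier coefficient is $\sum_{\ell\ge\abs m}(2\ell+1)\rho^\ell$, which tends to infinity as $\rho\uparrow1$. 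Pairing the regularized full trace with $\varphi\in C_c^\infty(\R)$ therefore gives
\[
  \frac{4\pi}{(1-\rho)^2}\sum_{k}(-1)^k\varphi(2\pi k)+O\bigl((1-\rho)^{-1}\bigr),
\]
which diverges. Geometrically, the regularized series is an approximate $\delta_e$ on the three-manifold $\SO(3)$, restricted to a curve. The paper's justification, polynomial growth of the coefficients, does not produce a limit either. What does hold is convergence to $0$ in $\mathcal D'(\R\setminus2\pi\mathbb Z)$. The Hopf series has the same defect at the periods, where its pairings grow like $(1-\rho)^{-3/2}$. There, however, the statement only asserts the off-period identity, and your pointwise argument proves it.
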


\begin{proof}
The Peter--Weyl decomposition gives
\[
  L^2(Y)
  =
  \widehat\bigoplus_{\ell\ge0}V_\ell\otimes V_\ell^* .
\]
Right translation acts only on the multiplicity factor, hence on the \(\ell\)-th block
\[
  \widetilde V_g^t
  =
  e^{\ii t/2}\Id_{V_\ell}\otimes\pi_\ell^*(g_t).
\]
Taking the ordinary trace of the finite-dimensional \(\ell\)-block gives
\[
  e^{\ii t/2}(2\ell+1)\Tr_{V_\ell^*}(\pi_\ell^*(g_t))
  =
  e^{\ii t/2}(2\ell+1)\chi_\ell(g_t),
\]
which proves \eqref{eq:C-full-trace-Abel}.  The Abel factor \(\rho^\ell\) gives an absolutely convergent smooth function for \(0<\rho<1\).  Since the coefficients have at most polynomial growth, the limit as \(\rho\uparrow1\) exists in \(\mathcal D'(\R)\).  Formula \eqref{eq:C-character} is the standard character formula for the \((2\ell+1)\)-dimensional representation of \(\SO(3)\).

On the multiplicity factor the Hardy projectors are the rank-one projectors
\[
  P_{h,\ell}=\ket{\widehat u_\ell}\bra{\widehat u_\ell},
  \qquad
  P_{\tau,\ell}=\ket{\widehat w_\ell(\tau)}\bra{\widehat w_\ell(\tau)} .
\]
Thus
\[
  \Tr_{V_\ell\otimes V_\ell^*}
  (\Pi_h\widetilde V_g^t\Pi_h)
  =
  e^{\ii t/2}(2\ell+1)
  \ip{\widehat u_\ell}{\pi_\ell^*(g_t)\widehat u_\ell}.
\]
With \(\widehat u_\ell=z_0^{2\ell}\) and
\[
  g_t\cdot z_0=
  \cos(t/2)z_0+
  \sin(t/2)z_1,
\]
orthogonality of monomials gives
\[
  \ip{\widehat u_\ell}{\pi_\ell^*(g_t)\widehat u_\ell}
  =\cos^{2\ell}(t/2).
\]
This proves \eqref{eq:C-Hopf-trace-Abel}.  The closed form \eqref{eq:C-Hopf-closed} follows from
\[
  \sum_{\ell=0}^{\infty}(2\ell+1)x^\ell
  =\frac{1+x}{(1-x)^2},
  \qquad |x|<1,
\]
with \(x=\cos^2(t/2)\).

The same rank-one computation with \(P_{\tau,\ell}\) gives \eqref{eq:C-Grauert-trace-Abel} and \eqref{eq:C-a-tau-def}.  Since \(\pi_\ell^*(g_t)\) is unitary, \(\abs{a_{\tau,\ell}(t)}\le1\), so the Abel sums again define distributions.

For the mixed trace, the multiplicity-factor trace is
\[
  \Tr_{V_\ell^*}(P_{h,\ell}\pi_\ell^*(g_t)P_{\tau,\ell})
  =
  \ip{\widehat w_\ell(\tau)}{\widehat u_\ell}
  \ip{\widehat u_\ell}{\pi_\ell^*(g_t)\widehat w_\ell(\tau)}.
\]
With the phase convention above, the first factor is \(c_\ell(\tau)\).  For the real Grauert vector,
\[
  w_\ell(\tau)=\bigl(z_0^2+z_1^2+2\alpha z_0z_1\bigr)^\ell,
  \qquad \alpha=\Coth\tau .
\]
Using
\[
  g_t\cdot(z_0,z_1)
  =
  \bigl(\cos(t/2)z_0+\sin(t/2)z_1,
        -\sin(t/2)z_0+\cos(t/2)z_1\bigr),
\]
the coefficient of \(z_0^{2\ell}\) in \(g_t\cdot w_\ell(\tau)\) is
\[
  \bigl(\cos^2(t/2)+\sin^2(t/2)-2\alpha\cos(t/2)\sin(t/2)\bigr)^\ell
  =
  \bigl(1-\alpha\sin t\bigr)^\ell .
\]
Because \(\widehat w_\ell=w_\ell/\|w_\ell\|\) and
\(c_\ell(\tau)=\|w_\ell(\tau)\|^{-1}\), this gives
\[
  \ip{\widehat u_\ell}{\pi_\ell^*(g_t)\widehat w_\ell(\tau)}
  =
  c_\ell(\tau)\bigl(1-\alpha\sin t\bigr)^\ell .
\]
Multiplying by the first factor \(c_\ell(\tau)\) yields the coefficient
\(s_\ell(\tau)(1-\alpha\sin t)^\ell\), and hence \eqref{eq:C-cross-trace-real}.

For the opposite mixed trace one similarly obtains
\[
  \Tr_{V_\ell^*}(P_{\tau,\ell}\pi_\ell^*(g_t)P_{h,\ell})
  =
  c_\ell(\tau)
  \ip{\widehat w_\ell(\tau)}{\pi_\ell^*(g_t)\widehat u_\ell}.
\]
Since $\pi_\ell^*(g_t)$ is unitary,
\[
  \ip{\widehat w_\ell(\tau)}{\pi_\ell^*(g_t)\widehat u_\ell}
  =\overline{\ip{\widehat u_\ell}{\pi_\ell^*(g_{-t})\widehat w_\ell(\tau)}},
\]
and replacing $t$ by $-t$ in the computation above gives
\[
  \ip{\widehat u_\ell}{\pi_\ell^*(g_{-t})\widehat w_\ell(\tau)}
  =c_\ell(\tau)\bigl(1+\alpha\sin t\bigr)^\ell.
\]
This number is real: $u_\ell$ and $w_\ell(\tau)$ have real coefficients, the rotation $g_{-t}$ acts on $(z_0,z_1)$ with real matrix entries, and polynomials with real coefficients pair to real values in the monomial basis.  Taking the conjugate therefore changes nothing, and the coefficient of the opposite mixed trace is $s_\ell(\tau)(1+\alpha\sin t)^\ell$.  This proves \eqref{eq:C-cross-opposite-real}.

It remains only to justify absolute convergence of the mixed series.  From the Legendre asymptotic in the norm calculation,
\[
  s_\ell(\tau)=O_\tau\bigl(r(\tau)^{2\ell}\bigr).
\]
For real \(t\),
\[
  \abs{1-\Coth\tau\sin t}\le1+\Coth\tau,
\]
and
\[
  r(\tau)^2\bigl(1+\Coth\tau\bigr)
  =
  \frac{1-e^{-2\tau}}2
  =r(\tau)<1 .
\]
Therefore both mixed series are dominated by convergent geometric series, uniformly in \(t\in\R\).
\end{proof}

\begin{theorem}[Coefficients at the geodesic periods]
\label{thm:period-symbols}
Let $t_0=2\pi k$ with $k\in\mathbb Z$.  The level-$\ell$ coefficients of the two diagonal traces and either mixed trace satisfy
\begin{align}
  \ip{\widehat u_\ell}{\pi_\ell^*(g_{t_0})\widehat u_\ell}&=1,\label{eq:C-hh-period}\\
  \ip{\widehat w_\ell(\tau)}{\pi_\ell^*(g_{t_0})\widehat w_\ell(\tau)}&=1,\label{eq:C-tt-period}\\
  \Tr_{V_\ell^*}(P_{h,\ell}\pi_\ell^*(g_{t_0})P_{\tau,\ell})
  &=s_\ell(\tau).\label{eq:C-cross-period}
\end{align}
Moreover,
\begin{equation}\label{eq:C-s-principal}
  s_\ell(\tau)
  =C_{\BF}(\tau)^2r(\tau)^{2\ell}
   \left(1+O_\tau(\ell^{-1})\right),
  \qquad \ell\to\infty,
\end{equation}
with $r(\tau)$ and $C_{\BF}(\tau)$ as in \eqref{eq:intro-factorization}.  Thus the two diagonal coefficients are identically one, while the mixed coefficient satisfies
\[
  r(\tau)^{-2\ell}s_\ell(\tau)\longrightarrow C_{\BF}(\tau)^2.
\]
This theorem concerns the coefficients of the four traces with Hardy projectors.  It does not identify their sum with the full flat trace, even modulo a smooth function.
\end{theorem}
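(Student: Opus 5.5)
The plan is to reduce everything to the explicit action of $g_{t_0}$ on the Borel--Weil model. By the trace computation in Theorem~\ref{thm:trace-dirichlet}, $g_t$ acts on $\mathrm{Sym}^{2\ell}\C^2$ through the substitution
\[
(z_0,z_1)\mapsto\bigl(\cos(t/2)z_0+\sin(t/2)z_1,\,-\sin(t/2)z_0+\cos(t/2)z_1\bigr).
\]
At $t_0=2\pi k$ this matrix equals $(-1)^k\Id$. On a homogeneous polynomial of even degree $2\ell$, the substitution therefore multiplies by $(-1)^{2\ell k}=1$. Consequently $\pi_\ell^*(g_{t_0})=\Id_{V_\ell^*}$. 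This is also consistent with $g_{2\pi k}=e$ in $\SO(3)$, so the lift to $\SU(2)$ plays no role.

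Identities \eqref{eq:C-hh-period} and \eqref{eq:C-tt-period} then follow at once, because $\widehat u_\ell$ and $\widehat w_\ell(\tau)$ are unit vectors. As a cross-check, the first also follows from $\cos^{2\ell}(\pi k)=1$. For \eqref{eq:C-cross-period}, I write the trace as $\ip{\widehat w_\ell(\tau)}{\widehat u_\ell}\ip{\widehat u_\ell}{\widehat w_\ell(\tau)}=\abs{\ip{\widehat u_\ell}{\widehat w_\ell(\tau)}}^2=s_\ell(\tau)$. Alternatively, it can be read off the explicit coefficient $s_\ell(\tau)(1\mp\Coth\tau\sin t)^\ell$ of Theorem~\ref{thm:trace-dirichlet}, since $\sin(2\pi k)=0$. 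This covers both mixed traces.

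For \eqref{eq:C-s-principal}, I start from the exact formula \eqref{eq:overlap-squared} and feed in the refined asymptotics. The central binomial satisfies $\binom{2\ell}{\ell}4^{-\ell}=(\pi\ell)^{-1/2}(1+O(\ell^{-1}))$ by Stirling with remainder. For the Legendre factor I use the Laplace integral
\[
\Leg_\ell(\cosh\eta)=\pi^{-1}\int_0^\pi(\cosh\eta+\sinh\eta\cos\phi)^\ell\,d\phi .
\]
A Laplace-method expansion at $\phi=0$ gives \eqref{eq:leg-szego} with relative error $O_\eta(\ell^{-1})$, uniformly for fixed $\eta=2\tau>0$. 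Multiplying these as in the proof of Corollary~\ref{cor:asymptotics} yields $s_\ell=C_0(\tau)r(\tau)^{2\ell}(1+O_\tau(\ell^{-1}))$ with $C_0=C_{\BF}^2$. Dividing by $r^{2\ell}$ gives the stated limit.

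The only step needing care is upgrading $o(1)$ to $O(\ell^{-1})$ in the Legendre asymptotic. I expect the Laplace method with a quadratic phase to handle this routinely, since the maximum at $\phi=0$ is nondegenerate for $\sinh\eta>0$. Alternatively, I can cite Proposition~\ref{prop:sell-expansion}, which already gives the full expansion. The closing sentence of the theorem is a disclaimer rather than a claim, so it requires no proof.
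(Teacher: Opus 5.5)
Your proposal is correct and follows essentially the same route as the paper: $\pi_\ell^*(g_{2\pi k})=\Id_{V_\ell^*}$ gives the two diagonal identities and reduces the mixed trace to $\abs{\ip{\widehat u_\ell}{\widehat w_\ell(\tau)}}^2=s_\ell(\tau)$, and the asymptotic follows from the exact formula via Stirling and the Legendre asymptotic. Your care with the error term is justified, because Corollary~\ref{cor:asymptotics} only gives $1+o(1)$; the $O_\tau(\ell^{-1})$ rate the paper uses really rests on the refined expansion of Proposition~\ref{prop:sell-expansion}, which is the Laplace-method argument you describe.
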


\begin{proof}
Since $g_{2\pi k}=e$ in $\SO(3)$, the operator $\pi_\ell^*(g_{t_0})$ is the identity on $V_\ell^*$.  This proves the two diagonal identities.  For the mixed coefficient,
\[
  \Tr_{V_\ell^*}(P_{h,\ell}P_{\tau,\ell})
  =\ip{\widehat w_\ell(\tau)}{\widehat u_\ell}
   \ip{\widehat u_\ell}{\widehat w_\ell(\tau)}
  =s_\ell(\tau).
\]
The large-$\ell$ overlap asymptotic gives
\[
  c_\ell(\tau)
  =C_{\BF}(\tau)r(\tau)^\ell
   \left(1+O_\tau(\ell^{-1})\right).
\]
Squaring proves \eqref{eq:C-s-principal}.
\end{proof}

\begin{theorem}[Difference from the full flat trace at the periods]
\label{thm:trace-difference}
Define the sum of the four traces with Hardy projectors
\begin{equation}\label{eq:C-four-trace-sum}
  \mathcal H_\tau(t)
  =
  T_{hh}(t)+T_{\tau\tau}(t)+T_{h\tau}(t)+T_{\tau h}(t)
\end{equation}
and the difference distribution
\begin{equation}\label{eq:C-trace-difference}
  \mathcal D_\tau(t)
  =
  \Tr^{\flat}(\widetilde V_g^t)-\mathcal H_\tau(t).
\end{equation}
Then \(\mathcal D_\tau\) is not a smooth function.  More precisely, for \(t_0=2\pi k\) and for the Abel regularization obtained by inserting the same factor \(\rho^\ell\) in every spectral series, one has
\begin{equation}\label{eq:C-trace-difference-Abel-exact}
  \mathcal D_{\tau,\rho}(t_0)
  =
  (-1)^k
  \sum_{\ell=0}^{\infty}\rho^\ell(2\ell+1)
  \Bigl((2\ell+1)-2-2s_\ell(\tau)\Bigr).
\end{equation}
As \(\rho\uparrow1\),
\begin{equation}\label{eq:C-trace-difference-asymptotic}
  \mathcal D_{\tau,\rho}(t_0)
  =
  (-1)^k
  \left(
    \frac{8}{(1-\rho)^3}
    -\frac{12}{(1-\rho)^2}
    +\frac{3}{1-\rho}
    +O_\tau(1)
  \right).
\end{equation}
In particular, the full flat trace is not equal to the sum of the four traces with Hardy projectors modulo a smooth function near any geodesic period.
\end{theorem}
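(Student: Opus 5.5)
The plan is to evaluate every Abel-regularized spectral series of Theorem~\ref{thm:trace-dirichlet} at the period $t_0=2\pi k$, with the same factor $\rho^\ell$ in each. Since $g_{2\pi k}=e$ in $\SO(3)$, the level-$\ell$ coefficients at $t_0$ are as follows.

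\begin{itemize}
\item The full trace has coefficient $(2\ell+1)\chi_\ell(e)=(2\ell+1)^2$.
\item The two diagonal traces each have coefficient $(2\ell+1)$, by \eqref{eq:C-hh-period} and \eqref{eq:C-tt-period}.
\item Each mixed trace has coefficient $(2\ell+1)s_\ell(\tau)$. This follows from \eqref{eq:C-cross-period}, or directly from \eqref{eq:C-cross-trace-real} and \eqref{eq:C-cross-opposite-real}, since $\sin t_0=0$.
\end{itemize}

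The common prefactor is $e^{\ii t_0/2}=(-1)^k$. Subtracting the four Hardy contributions from the full trace term by term inside the absolutely convergent sums (for $\rho<1$) gives \eqref{eq:C-trace-difference-Abel-exact} directly.

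For the asymptotics I will split the sum into three pieces and evaluate each in closed form in $x=1-\rho$.
\begin{itemize}
\item The first piece is $\sum_\ell\rho^\ell(2\ell+1)^2=(1+6\rho+\rho^2)/(1-\rho)^3$. This follows from $\sum\ell\rho^\ell=\rho/(1-\rho)^2$ and $\sum\ell^2\rho^\ell=\rho(1+\rho)/(1-\rho)^3$. With $\rho=1-x$ the numerator is $8-8x+x^2$, so this piece equals $8/x^3-8/x^2+1/x$.
\item The second piece is $2\sum\rho^\ell(2\ell+1)=2(1+\rho)/(1-\rho)^2=4/x^2-2/x$.
\item The third piece is $2\sum\rho^\ell(2\ell+1)s_\ell(\tau)$. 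It is bounded uniformly in $\rho\in[0,1]$ because $s_\ell(\tau)=O_\tau(r(\tau)^{2\ell})$ with $r(\tau)<1$ (Corollary~\ref{cor:asymptotics}, Theorem~\ref{thm:period-symbols}), so it is $O_\tau(1)$.
\end{itemize}
Combining the three pieces gives $8/x^3-12/x^2+3/x+O_\tau(1)$, which is \eqref{eq:C-trace-difference-asymptotic}.

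The main obstacle is the non-smoothness claim, because pointwise divergence of Abel means at a single point does not by itself contradict smoothness of a distributional limit. I would make the argument via the smoothed Abel sums $\mathcal D_{\tau,\rho}(t)=\sum_\ell\rho^\ell(2\ell+1)d_\ell(t)$, whose coefficients $d_\ell(t)$ are the Peter--Weyl matrix coefficients of the difference.

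The first route is to note that the mixed series converge uniformly with all derivatives, because of the exponential decay of $s_\ell$ against the bound $|1\pm\Coth\tau\sin t|\le 1+\Coth\tau$. So they are smooth, and the question reduces to $\Tr^\flat-T_{hh}-T_{\tau\tau}$. Near $t_0$, the full flat trace is a finite combination of $\delta_{t_0}$ and $\delta_{t_0}''$, coming from $\sum(2\ell+1)\chi_\ell$. If $\mathcal D_\tau$ were smooth near $t_0$, then $T_{hh}+T_{\tau\tau}$ would equal this delta combination modulo $C^\infty$.

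I would test this identity against a rescaled bump $\varphi((t-t_0)/\epsilon)$ and compare orders in $\epsilon$. The delta terms scale like $\epsilon^{-2}$ and $\epsilon^{0}$ with fixed coefficients. The diagonal traces are Abel limits of $\sum(2\ell+1)a_\ell(t)$ with $|a_\ell|\le1$ and $a_\ell(t_0)=1$, and they produce a different homogeneous structure; for $T_{hh}$ this is visible from the closed form \eqref{eq:C-Hopf-closed}. Matching the cubic blow-up $8(1-\rho)^{-3}$ of the regularized difference against the $(1-\rho)^{-3}$ growth that a pure $\delta''$ would produce should give the contradiction.

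If this bookkeeping proves delicate, the fallback is to show that the Fourier-type coefficients $d_\ell(t_0)=(2\ell+1)-2-2s_\ell$ grow linearly in $\ell$. These are incompatible with those of a function that is smooth near $t_0$ plus the specific delta combination, which I would check by pairing with Fejér-type kernels concentrated at $t_0$.
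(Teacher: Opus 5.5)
Your evaluation of the level-$\ell$ coefficients at $t_0=2\pi k$, the exact identity \eqref{eq:C-trace-difference-Abel-exact}, and the expansion $8x^{-3}-12x^{-2}+3x^{-1}+O_\tau(1)$ coincide with the paper's proof and are correct. The gap is the non-smoothness claim.

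Your objection there is well founded. The paper closes that step by asserting that Abel means in $\ell$ converge in $C^\infty$ on every open set where the limit is smooth. This is not a general property. If $b_\ell(t)=\sum_{|m|\le\ell}\beta_{\ell-|m|}e^{imt}$ with $\sum_j\beta_j\rho^j=(1-\rho)^{1/2}$, then $\sum_\ell\rho^\ell b_\ell$ is $(1-\rho)^{1/2}$ times the Poisson kernel. It tends to $0$ in $\mathcal D'$, yet its value at $t=0$ is $(1+\rho)(1-\rho)^{-1/2}$.

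Your replacement does not close the gap either. The premise of your first route is false. Write $\Tr^{\flat}_{\rho}$ and $T_{hh,\rho}$ for the Abel-regularized traces, and take $\varphi$ supported in $(t_0-2\pi,t_0+2\pi)$ with $\psi=e^{it/2}\varphi$. Poisson summation gives
\[
  \bigl\langle \Tr^{\flat}_{\rho},\varphi\bigr\rangle
  =2\pi\psi(t_0)\,\frac{1+\rho}{(1-\rho)^2}+2\pi\psi''(t_0)+o(1),
\]
and likewise $\langle T_{hh,\rho},\varphi\rangle\sim 2\pi(-1)^k\varphi(t_0)(1-\rho)^{-3/2}$. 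So the regularized full trace is not a finite combination of $\delta_{t_0}$ and $\delta''_{t_0}$. Near a period, the distributional limits you want to compare modulo $C^\infty$ do not exist, contrary to what Theorem~\ref{thm:trace-dirichlet} asserts.

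There are three further problems.
\begin{enumerate}
\item The term $8(1-\rho)^{-3}$ in $\mathcal D_{\tau,\rho}(t_0)$ comes entirely from $\sum_\ell\rho^\ell(2\ell+1)^2$, the full-trace term you propose to treat as a $\delta$-combination. The diagonal traces contribute only $O((1-\rho)^{-2})$ at $t_0$. Matching this term against what $\delta''$ would produce therefore gives no contradiction.
\item Your fallback, linear growth of $d_\ell(t_0)$, is again pointwise data at $t_0$, which you had already ruled out.
\item Nothing in your plan prevents $T_{\tau\tau}$ from cancelling the $(t-t_0)^{-4}$ blow-up of $T_{hh}$. The facts $|a_{\tau,\ell}|\le1$ and $a_{\tau,\ell}(t_0)=1$ do not exclude this.
\end{enumerate}

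An argument that works stays off $t_0$, where every Abel limit is an honest smooth function. Work on $(t_0-2\pi,t_0+2\pi)\setminus\{t_0\}$.
\begin{enumerate}
\item The regularized full trace tends to $0$ with all derivatives. Indeed $\sum_\ell\rho^\ell(2\ell+1)\sin\frac{(2\ell+1)t}{2}=\operatorname{Im}\bigl[e^{it/2}(1+\rho e^{it})(1-\rho e^{it})^{-2}\bigr]$, and the bracket is real at $\rho=1$.
\item The mixed traces are smooth across $t_0$ by your domination argument.
\item The Hopf diagonal trace satisfies $e^{-it/2}T_{hh}(t)=(1+\cos^2\frac{t}{2})/\sin^4\frac{t}{2}\sim32(t-t_0)^{-4}$.
\item For the Grauert diagonal trace,
\[
  a_{\tau,\ell}(t)=\frac{\Leg_\ell\bigl(\sinh^2\tau+\cosh^2\tau\cos t\bigr)}{\Leg_\ell(\cosh 2\tau)}.
\]
This holds because, for $q_i=z^TQ_iz$, the pairing $\langle q_1^\ell,q_2^\ell\rangle$ depends only on $\operatorname{tr}(Q_1\overline{Q_2})$ and $\det Q_1\,\overline{\det Q_2}$. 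The diagonal case is the computation of Proposition~\ref{prop:norm-formula}.
\end{enumerate}
Near $t_0$ the argument of $\Leg_\ell$ lies in $(1,\cosh 2\tau)$, so every $a_{\tau,\ell}(t)$ is positive and $e^{-it/2}T_{\tau\tau}(t)>0$. Hence $e^{-it/2}\mathcal D_\tau(t)\to-\infty$ as $t\to t_0$. So no function smooth on a neighborhood of $t_0$ agrees with $\mathcal D_\tau$ off $t_0$.
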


\begin{proof}
At a period \(t_0=2\pi k\), the phase factor is
\(e^{\ii t_0/2}=(-1)^k\), and \(g_{t_0}=e\).  Therefore the \(\ell\)-th full flat-trace coefficient is
\[
  (-1)^k(2\ell+1)\chi_\ell(e)
  =
  (-1)^k(2\ell+1)^2 .
\]
The two diagonal Hardy coefficients are each
\[
  (-1)^k(2\ell+1),
\]
and the two mixed coefficients are each
\[
  (-1)^k(2\ell+1)s_\ell(\tau).
\]
Subtracting gives the exact Abel-regularized identity \eqref{eq:C-trace-difference-Abel-exact}.

The elementary sums
\begin{align*}
  \sum_{\ell=0}^{\infty}\rho^\ell(2\ell+1)^2
  &=
  \frac{1+6\rho+\rho^2}{(1-\rho)^3},\\
  2\sum_{\ell=0}^{\infty}\rho^\ell(2\ell+1)
  &=
  \frac{2(1+\rho)}{(1-\rho)^2}
\end{align*}
are obtained from
\(\sum\rho^\ell=(1-\rho)^{-1}\) by applying \(\rho\partial_\rho\).  Since
\(s_\ell(\tau)=O_\tau(r(\tau)^{2\ell})\) with \(0<r(\tau)<1\), the mixed Abel sum
\[
  2\sum_{\ell=0}^{\infty}\rho^\ell(2\ell+1)s_\ell(\tau)
\]
converges to a finite limit as \(\rho\uparrow1\).  Consequently
\begin{align*}
  \mathcal D_{\tau,\rho}(t_0)
  &=
  (-1)^k
  \left[
  \frac{1+6\rho+\rho^2}{(1-\rho)^3}
  -
  \frac{2(1+\rho)}{(1-\rho)^2}
  +O_\tau(1)
  \right]\\
  &=
  (-1)^k
  \left[
  \frac{-1+6\rho+3\rho^2}{(1-\rho)^3}
  +O_\tau(1)
  \right].
\end{align*}
Writing \(\delta=1-\rho\), the numerator is $8-12\delta+3\delta^2$,
which proves \eqref{eq:C-trace-difference-asymptotic}.

The factor $\rho^\ell=e^{-\varepsilon\ell}$ is the Abel regularization in the representation index $\ell$.  On every open set where the limiting distribution is smooth, these regularizations converge in $C^\infty$ as $\varepsilon\downarrow0$.  The cubic divergence in \eqref{eq:C-trace-difference-asymptotic} therefore rules out smoothness of $\mathcal D_\tau$ near $t_0$.
\end{proof}

\section{Fredholm determinant of \texorpdfstring{$\mathsf T_\tau$}{T tau}}
\label{sec:fredholm-determinant}
The spectrum in Proposition~\ref{prop:T-spectrum} is summable, so the Fredholm determinant
\begin{equation}
\label{eq:fredholm-determinant}
  D_{\mathsf T}(z,\tau)=\det(I-z\mathsf T_\tau)
\end{equation}
is defined for every $z\in\C$.

\begin{theorem}[Product formula, order zero, and genus zero]
\label{thm:det-order-zero}
For each $\tau>0$, the function $D_{\mathsf T}(\,\cdot\,,\tau)$ is entire and
\begin{equation}
\label{eq:det-product}
  D_{\mathsf T}(z,\tau)
  =\prod_{\ell=0}^{\infty}\bigl(1-zs_\ell(\tau)\bigr)^{2\ell+1}.
\end{equation}
It has order zero.  More precisely, for every $\varepsilon>0$ there is a constant $C_{\tau,\varepsilon}$ such that
\begin{equation}
\label{eq:det-growth-order-zero}
  \log\abs{D_{\mathsf T}(z,\tau)}
  \le C_{\tau,\varepsilon}(1+\abs z)^\varepsilon.
\end{equation}
The product in \eqref{eq:det-product} is the canonical Hadamard product of genus zero.
\end{theorem}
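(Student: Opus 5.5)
The plan is to combine three ingredients: the spectral description of $\mathsf T_\tau$ in Proposition~\ref{prop:T-spectrum}, the standard Fredholm theory of trace-class operators, and the exponential decay of the eigenvalues. Since $\mathsf T_\tau$ is positive and trace class, the Fredholm determinant $\det(I-z\mathsf T_\tau)$ is entire in $z$. It equals the product $\prod_j(1-z\lambda_j)$ over the eigenvalues counted with multiplicity (Lidskii, or directly for a diagonalizable positive operator). Proposition~\ref{prop:T-spectrum} gives eigenvalue $s_\ell(\tau)$ with multiplicity $2\ell+1$, which yields \eqref{eq:det-product}. The product converges absolutely and locally uniformly because $\sum_\ell(2\ell+1)s_\ell(\tau)<\infty$. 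This summability follows from Corollary~\ref{cor:asymptotics}, which gives $s_\ell(\tau)\le C_\tau r^{2\ell}$ with $r=r(\tau)<1$.

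For the growth bound, I would use $\log|1-zs|\le\log(1+|z|s)$ and split the sum at the index $L$ where $|z|r^{2L}\approx1$, i.e.\ $L\approx\log|z|/(2\log(1/r))$.
\begin{itemize}
\item For $\ell\le L$, bound $\log(1+|z|s_\ell)\le\log(1+C_\tau|z|)$. Summing the multiplicities $2\ell+1$ gives at most $(L+1)^2\log(1+C_\tau|z|)=O_\tau\bigl((\log(2+|z|))^3\bigr)$.
\item For $\ell>L$, use $\log(1+x)\le x$. The tail is at most $C_\tau|z|\sum_{\ell>L}(2\ell+1)r^{2\ell}=O_\tau(L\,|z|r^{2L})=O_\tau(\log(2+|z|))$.
\end{itemize}
Hence $\log|D_{\mathsf T}(z,\tau)|=O_\tau\bigl((\log(2+|z|))^3\bigr)$. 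This is $\le C_{\tau,\varepsilon}(1+|z|)^\varepsilon$ for every $\varepsilon>0$, which gives \eqref{eq:det-growth-order-zero} and order zero.

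For the genus statement, the zeros of $D_{\mathsf T}$ are at $z=s_\ell(\tau)^{-1}$ with multiplicity $2\ell+1$. Note that $s_\ell>0$ for every $\ell$, since $c_\ell=\|w_\ell\|^{-1}>0$. The exponent of convergence is zero because
\[
\sum_\ell(2\ell+1)s_\ell^{\sigma}<\infty\quad\text{for every }\sigma>0,
\]
again by geometric decay. In particular $\sum|z_j|^{-1}<\infty$, so genus-zero canonical factors $(1-z/z_j)$ suffice and the canonical product converges. By Hadamard's factorization theorem for an entire function of order $0<1$, $D_{\mathsf T}=e^{P(z)}\prod(1-z/z_j)$ with $\deg P\le0$. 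Normalizing with $D_{\mathsf T}(0,\tau)=1$ gives $P=0$, so \eqref{eq:det-product} is exactly the canonical product of genus zero. Coincident zeros, if some $s_\ell$ happened to agree, are harmless: multiplicities just add. Monotonicity in $\ell$ from Proposition~\ref{prop:monotonicity} rules this out anyway for $\ell\ge1$, since $s_0=1$ exceeds all the others.

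The only mildly delicate step is the growth estimate, where the split index $L$ must be chosen carefully. The squared-logarithm multiplicity count is what yields polylogarithmic rather than polynomial growth. Everything else is standard once the exponential bound on $s_\ell(\tau)$ is in hand.
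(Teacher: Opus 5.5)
Your proposal is correct. Its overall skeleton matches the paper's: the product formula comes from the spectrum of $\mathsf T_\tau$ via trace-class theory, then a growth estimate, exponent of convergence zero, and the normalization $D_{\mathsf T}(0,\tau)=1$.

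The genuine difference is the growth estimate. The paper does it in one line, using $\log(1+x)\le C_\varepsilon x^\varepsilon$ for $x\ge0$ and $0<\varepsilon\le1$:
\[
  \log\abs{D_{\mathsf T}(z,\tau)}\le C_\varepsilon\abs z^\varepsilon\sum_{\ell\ge0}(2\ell+1)s_\ell(\tau)^\varepsilon .
\]
Its only input is that every $\varepsilon$-moment of the spectrum is finite, and that same summability also gives the genus claim. You instead split at $L\approx\log\abs z/(2\log(1/r))$, which uses the precise geometric rate. This gives the stronger bound $\log\abs{D_{\mathsf T}(z,\tau)}=O_\tau\bigl((\log(2+\abs z))^3\bigr)$. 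That bound is sharp in order. Since all zeros are positive, the maximum modulus on $\abs z=R$ is attained at $z=-R$. Moreover $\log D_{\mathsf T}(-R,\tau)=\sum(2\ell+1)\log(1+Rs_\ell(\tau))$ grows like $(\log R)^3$, consistent with a zero-counting function $n(t)\asymp(\log t)^2$. So the paper's route is shorter and works for any spectrum whose exponent of convergence is zero, while yours gives a quantitative refinement of ``order zero.''

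Two minor remarks. The explicit appeal to Hadamard's factorization theorem is harmless but redundant, since the product formula is already established via Lidskii and the genus-zero factors converge. In the tail estimate, take $L=\lceil\log\abs z/(2\log(1/r))\rceil$, or $L=0$ when $\abs z\le1$, so that $\abs z\,r^{2L}\le1$. The tail is then $O_\tau(L+1)$ rather than $O_\tau(L\abs z r^{2L})$.
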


\begin{proof}
The exponential estimate $s_\ell(\tau)=O_\tau(r(\tau)^{2\ell})$ gives
\begin{equation}
\label{eq:all-eps-summability}
  \sum_{\ell\ge0}(2\ell+1)s_\ell(\tau)^\varepsilon<\infty
  \qquad \text{for every }\varepsilon>0.
\end{equation}
The case $\varepsilon=1$ proves trace-class summability and gives the Fredholm product \eqref{eq:det-product}.  For $0<\varepsilon\le1$, the inequality $\log(1+x)\le C_\varepsilon x^\varepsilon$ yields
\[
\begin{aligned}
  \log\abs{D_{\mathsf T}(z,\tau)}
  &\le\sum_{\ell\ge0}(2\ell+1)\log\bigl(1+\abs z\,s_\ell(\tau)\bigr)\\
  &\le C_\varepsilon\abs z^\varepsilon
      \sum_{\ell\ge0}(2\ell+1)s_\ell(\tau)^\varepsilon.
\end{aligned}
\]
This proves \eqref{eq:det-growth-order-zero} and hence order zero.  The zeros are $s_\ell(\tau)^{-1}$ with multiplicity $2\ell+1$.  Equation~\eqref{eq:all-eps-summability} shows that their exponent of convergence is zero, so no higher genus factors are required.  Since $D_{\mathsf T}(0,\tau)=1$, the canonical product is exactly \eqref{eq:det-product}.
\end{proof}

\begin{proposition}[Logarithmic derivative]
\label{prop:log-derivative}
For every $\tau>0$,
\begin{equation}
\label{eq:log-derivative}
  \partial_z\log D_{\mathsf T}(z,\tau)
  =-\sum_{\ell=0}^{\infty}(2\ell+1)
    \frac{s_\ell(\tau)}{1-zs_\ell(\tau)}.
\end{equation}
For $\abs z<\norm{\mathsf T_\tau}^{-1}$,
\begin{equation}
\label{eq:log-derivative-zeta-series}
  \partial_z\log D_{\mathsf T}(z,\tau)
  =-\sum_{n=1}^{\infty}z^{n-1}Z_{\mathsf T}(n,\tau),
\end{equation}
where
\[
  Z_{\mathsf T}(n,\tau)=\sum_{\ell=0}^{\infty}(2\ell+1)s_\ell(\tau)^n.
\]
\end{proposition}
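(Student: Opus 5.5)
The plan is to differentiate the product formula \eqref{eq:det-product} of Theorem~\ref{thm:det-order-zero} term by term, and then expand each summand in a geometric series.

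\emph{First identity.} Away from the zero set $\{s_\ell(\tau)^{-1}\}$, work with the logarithmic derivative $D_{\mathsf T}'/D_{\mathsf T}$; this is meromorphic on $\C$, so no branch of $\log$ is needed. For $z$ in a compact set $K$ avoiding the zeros, the product $\prod_\ell(1-zs_\ell(\tau))^{2\ell+1}$ converges absolutely and uniformly on $K$, because
\[
  \sum_\ell(2\ell+1)\sup_K\abs{z}\,s_\ell(\tau)<\infty
\]
by \eqref{eq:all-eps-summability} with $\varepsilon=1$. Uniform convergence of holomorphic products allows logarithmic differentiation term by term. The factor $(1-zs_\ell)^{2\ell+1}$ contributes $-(2\ell+1)s_\ell/(1-zs_\ell)$. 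The resulting series converges locally uniformly on $K$, since $\abs{1-zs_\ell}\ge 1/2$ for all $\ell$ large (as $s_\ell\to0$). This gives \eqref{eq:log-derivative}, valid as an identity of meromorphic functions whose simple poles are exactly at the zeros of $D_{\mathsf T}$.

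\emph{Second identity.} By Proposition~\ref{prop:T-spectrum}, $\norm{\mathsf T_\tau}=\sup_\ell s_\ell(\tau)$; note $s_0=1$, since $V_0^*$ is one-dimensional, so this norm equals $1$. For $\abs z<\norm{\mathsf T_\tau}^{-1}$ we have $\abs{zs_\ell}\le\abs z\norm{\mathsf T_\tau}<1$ for every $\ell$. We then expand
\[
  \frac{s_\ell}{1-zs_\ell}=\sum_{n\ge1}z^{n-1}s_\ell^n .
\]
Writing $q=\abs z\norm{\mathsf T_\tau}<1$, the double series is dominated by
\[
  \sum_{\ell}(2\ell+1)s_\ell\sum_{n\ge1}q^{n-1}=\frac{1}{1-q}\sum_\ell(2\ell+1)s_\ell<\infty,
\]
using $s_\ell^n\le s_\ell\norm{\mathsf T_\tau}^{n-1}$. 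Fubini for absolutely convergent double series then lets us swap the sums. This yields \eqref{eq:log-derivative-zeta-series}, with each $Z_{\mathsf T}(n,\tau)$ finite.

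\emph{Main obstacle.} There is no real obstacle here. The only care needed is in the uniform-convergence justification for differentiating the infinite product, and in the domination bound that legitimizes exchanging the order of summation. Both rest on the trace-class summability established in Theorem~\ref{thm:det-order-zero}.
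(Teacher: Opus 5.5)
Your proof is correct and follows the same route as the paper. The paper also differentiates the product \eqref{eq:det-product} logarithmically term by term, then expands $s_\ell/(1-zs_\ell)$ geometrically and interchanges the absolutely convergent sums; you add the uniform-convergence and domination justifications (including $\norm{\mathsf T_\tau}=s_0(\tau)=1$) that the paper leaves implicit.
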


\begin{proof}
Termwise differentiation of \eqref{eq:det-product} gives \eqref{eq:log-derivative}.  Expanding $s/(1-zs)=\sum_{n\ge1}z^{n-1}s^n$ in the stated disk and interchanging the absolutely convergent sums gives \eqref{eq:log-derivative-zeta-series}.
\end{proof}

\section{Zeta function associated with \texorpdfstring{$\mathsf T_\tau$}{T tau}}
\label{sec:T-zeta}

\subsection{Definition}

\begin{definition}
\label{def:T-zeta}
For $\tau>0$ and $\Re(s)>0$ we define the \emph{zeta function associated with $\mathsf T_\tau$} by
\begin{equation}
\label{eq:T-zeta}
Z_{\mathsf{T}}(s,\tau)\;:=\;\sum_{\ell=0}^\infty (2\ell+1)\,s_\ell(\tau)^s.
\end{equation}
\end{definition}

Here $s_\ell(\tau)^s=\exp\bigl(s\log s_\ell(\tau)\bigr)$, with the real logarithm.  One has $s_0(\tau)=1$ and $0<s_\ell(\tau)<1$ for $\ell\ge1$.  Since $s_\ell(\tau)$ decays exponentially in $\ell$ by Corollary~\ref{cor:asymptotics}, the series \eqref{eq:T-zeta} converges absolutely for $\Re(s)>0$.
The large-$\ell$ expansion below gives a finite polylogarithmic expansion of the zeta series near the boundary of this half-plane.

\subsection{Eigenvalue asymptotics}

We first record the precise form of the large-$\ell$ expansion of $s_\ell(\tau)$.
Corollary~\ref{cor:asymptotics} gives the leading exponential scale.
A full expansion follows from Szeg\H{o}'s asymptotic for Legendre polynomials
$\Leg_\ell(\cosh 2\tau)$ and Stirling's formula for the central binomial coefficient.

\begin{proposition}[Complete large-$\ell$ expansion]
\label{prop:sell-expansion}
Fix $\tau>0$.
There exist smooth coefficients $a_j(\tau)$, $j\ge 1$, such that as $\ell\to\infty$,
\begin{equation}
\label{eq:sell-expansion}
s_\ell(\tau)
\;=\;
C_0(\tau)\,r(\tau)^{2\ell}
\Bigl(1+\frac{a_1(\tau)}{\ell}+\frac{a_2(\tau)}{\ell^2}+\cdots+\frac{a_N(\tau)}{\ell^N}
+\mathcal{O}_{N,\tau}(\ell^{-N-1})\Bigr),
\end{equation}
where $C_0(\tau)$ and $r(\tau)$ are as in Section~\ref{subsec:notation}, and
\begin{align}
\label{eq:a12-explicit}
a_1(\tau)
&=\frac{1-\Coth(2\tau)}{8}
=-\frac{1}{4(e^{4\tau}-1)},\\
a_2(\tau)
&=-\frac{(\Coth(2\tau)-1)(7\Coth(2\tau)+1)}{128}
=-\frac{4e^{4\tau}+3}{32(e^{4\tau}-1)^2}.
\end{align}
\end{proposition}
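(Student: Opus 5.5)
We start from the exact formula \eqref{eq:overlap-squared},
\[
  s_\ell(\tau)=\frac{\binom{2\ell}{\ell}}{4^\ell}\,\frac{\sinh^{2\ell}\tau}{\Leg_\ell(\cosh2\tau)},
\]
and expand the two factors separately in powers of $\ell^{-1}$.

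\emph{Binomial factor.} Stirling's series gives
\[
  \frac{\binom{2\ell}{\ell}}{4^\ell}=\frac{1}{\sqrt{\pi\ell}}\Bigl(1-\frac1{8\ell}+\frac1{128\ell^2}+\cdots\Bigr),
\]
with a complete asymptotic expansion in $\ell^{-1}$ and explicit rational coefficients. One convenient source is the expansion of $\Gamma(\ell+\tfrac12)/\Gamma(\ell+1)$.

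\emph{Legendre factor.} We use the Laplace integral
\[
  \Leg_\ell(\cosh\eta)=\frac1\pi\int_0^\pi(\cosh\eta+\sinh\eta\cos\phi)^\ell\,d\phi,\qquad \eta=2\tau>0.
\]
The integrand equals $e^{\ell\eta}\exp\bigl(\ell\log(1-\kappa(1-\cos\phi))\bigr)$ with $\kappa=e^{-\eta}\sinh\eta=(1-e^{-2\eta})/2\in(0,1/2)$. It has a unique nondegenerate maximum at $\phi=0$, where the phase behaves like $-\ell\kappa\phi^2/2$. Laplace's method, with the endpoint contribution at $\phi=0$ halved, then gives
\[
  \Leg_\ell(\cosh\eta)=\frac{e^{(\ell+1/2)\eta}}{\sqrt{2\pi\ell\sinh\eta}}\Bigl(1+\sum_{j\ge1}\frac{b_j(\eta)}{\ell^j}\Bigr)
\]
as a full asymptotic series. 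The normalization $\sqrt{2\pi/(\ell\kappa)}\,/(2\pi)\cdot e^{\ell\eta}$ matches \eqref{eq:leg-szego} because $e^{\eta/2}/\sqrt{\sinh\eta}=1/\sqrt{\kappa}$. The coefficients $b_j$ are polynomials in $\kappa^{-1}$, hence smooth in $\tau>0$. The remainder after $N$ terms is $O_{N,\tau}(\ell^{-N-1})$, uniformly on compact $\tau$-sets. The portion of the integral away from $\phi=0$ is exponentially smaller, since $1-\kappa(1-\cos\phi)<1$ there. Alternatively, one can cite Szeg\H{o}'s (Darboux-type) full expansion off the cut.

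\emph{Dividing the series.} The leading terms reproduce Corollary~\ref{cor:asymptotics}:
\[
  \frac1{\sqrt{\pi\ell}}\sinh^{2\ell}\tau\,\sqrt{2\pi\ell\sinh2\tau}\,e^{-(2\ell+1)\tau}=C_0(\tau)\,r(\tau)^{2\ell}.
\]
The quotient of the two asymptotic series is again an asymptotic series,
\[
  1+\frac{a_1}{\ell}+\frac{a_2}{\ell^2}+\cdots,
\]
obtained by inverting $1+\sum_j b_j\ell^{-j}$ formally. Here $a_j$ is an explicit polynomial in the $b_i$ and the Stirling coefficients, so each $a_j$ is smooth in $\tau$.

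\emph{Computing $a_1$ and $a_2$.} This requires the first two Laplace corrections. Write $\phi$-expansion $\log(1-\kappa(1-\cos\phi))=-\kappa\phi^2/2+c_4\phi^4+c_6\phi^6+\cdots$ and use the standard second-order Laplace formula. A cleaner cross-check is the known expansion
\[
  \Leg_\ell(\cosh\eta)\sim\frac{e^{(\ell+1/2)\eta}}{\sqrt{2\pi\ell\sinh\eta}}\sum_m\frac{(\tfrac12)_m^2}{m!\,(\ell+\tfrac32)_m}\frac{e^{-m\eta}\cdots}{(2\sinh\eta)^m}
\]
(the Legendre function's hypergeometric expansion), re-expanded in $\ell^{-1}$. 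At first order, $b_1=\tfrac18\cdot\coth\eta\cdot(\ldots)$ combines with the Stirling term $-1/8$ to give
\[
  a_1=\frac{1-\coth2\tau}{8}.
\]
Second order similarly yields $a_2$. We then verify the exponential forms by substituting $\coth2\tau-1=2/(e^{4\tau}-1)$.

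The main obstacle is this second-order bookkeeping. The existence of the expansion and the smoothness of the $a_j$ are routine consequences of Laplace's method. Getting $a_2$ right, however, requires care with the $\phi^4$ and $\phi^6$ Laplace corrections, the squared $\phi^4$ term, and the $\ell^{-2}$ Stirling term. I would confirm the result numerically against Table~\ref{tab:overlap-values} and against Proposition~\ref{prop:sharp-ratio}.
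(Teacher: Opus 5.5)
\textbf{Verdict: same route as the paper, but the explicit coefficients are asserted, not derived.} You follow the paper's approach: Stirling for $\binom{2\ell}{\ell}4^{-\ell}$, the Laplace integral for $\Leg_\ell(\cosh\eta)$ with an endpoint expansion at $\phi=0$, formal division, and a full Laplace or Szeg\H{o} series for arbitrary $N$. Your treatment of the existence of the expansion and the smoothness of the $a_j$ is adequate.

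\textbf{The gap.} Write $\Leg_\ell(\cosh\eta)=\frac{e^{(\ell+1/2)\eta}}{\sqrt{2\pi\ell\sinh\eta}}\bigl(1+b_1\ell^{-1}+b_2\ell^{-2}+O(\ell^{-3})\bigr)$. Dividing into $1-\tfrac1{8\ell}+\tfrac1{128\ell^2}$ gives $a_1=-\tfrac18-b_1$ and $a_2=\tfrac1{128}+\tfrac{b_1}{8}+b_1^2-b_2$. So, with $\eta=2\tau$, the claimed formulas are equivalent to $b_1=(\Coth\eta-2)/8$ and $b_2=(3\Coth\eta-2)^2/128$. You establish neither: ``$b_1=\tfrac18\coth\eta\cdot(\ldots)$'' and ``second order similarly'' state the answer rather than compute it.

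The cross-check series you propose is also wrong as written:
\begin{itemize}
\item The prefactor must be $\frac{\Gamma(\ell+1)}{\Gamma(\ell+3/2)}(2\pi\sinh\eta)^{-1/2}$, not $(2\pi\ell\sinh\eta)^{-1/2}$. These already differ at order $\ell^{-1}$, by a factor $1-\tfrac{3}{8\ell}$.
\item The ratio must be $e^{\eta}/(2\sinh\eta)$, not $e^{-\eta}/(2\sinh\eta)$.
\end{itemize}
Taken at face value (dropping the elided factor), your version gives $b_1=(\Coth\eta-1)/8$ and hence $a_1=-\Coth(2\tau)/8$, which is false.

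Your proposed checks cannot close this. Proposition~\ref{prop:sharp-ratio} is deduced from this proposition, so comparing with it is circular. Four-digit values at $\ell\le10$ in Table~\ref{tab:overlap-values} cannot pin down an $\ell^{-2}$ coefficient.

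\textbf{How to fix it.} The fix is a finite computation, and a corrected version of your cross-check is tidier than the paper's route. Substitute $v=\kappa(1-\cos\phi)$ in the Laplace integral to get
\[
  \Leg_\ell(\cosh\eta)=\frac{e^{\ell\eta}}{\pi}\int_0^{2\kappa}(1-v)^\ell v^{-1/2}(2\kappa-v)^{-1/2}\,dv.
\]
Expanding $(1-v/2\kappa)^{-1/2}$ and applying Watson's lemma with Beta integrals yields
\[
  \Leg_\ell(\cosh\eta)\sim\frac{\Gamma(\ell+1)}{\Gamma(\ell+\frac32)}\frac{e^{(\ell+1/2)\eta}}{\sqrt{2\pi\sinh\eta}}\sum_{m\ge0}\frac{(\frac12)_m^2}{m!\,(\ell+\frac32)_m}\mu^m,
  \qquad
  \mu=\frac{1}{1-e^{-2\eta}}=\frac{\Coth\eta+1}{2}.
\]
Since $\Gamma(\ell+1)/\Gamma(\ell+\frac32)=\ell^{-1/2}\bigl(1-\frac{3}{8\ell}+\frac{25}{128\ell^2}+\cdots\bigr)$, this gives
\[
  b_1=\frac{\mu}{4}-\frac38=\frac{\Coth\eta-2}{8},
  \qquad
  b_2=\frac{25}{128}-\frac{15\mu}{32}+\frac{9\mu^2}{32}=\frac{(3\Coth\eta-2)^2}{128}.
\]
The division formulas then give $a_1=\frac{1-\Coth 2\tau}{8}$ and $a_2=-\frac{(\Coth2\tau-1)(7\Coth2\tau+1)}{128}$. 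The exponential forms follow from $\Coth 2\tau-1=2/(e^{4\tau}-1)$.

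For comparison, the paper expands the phase $\log(\cosh\eta+\sinh\eta\cos\theta)$ to order $\theta^6$. It then evaluates $\frac34\beta_\eta$ and $\frac{15}{8}\gamma_\eta+\frac{105}{32}\beta_\eta^2$, with $\beta_\eta=c_\eta/a_\eta^2$ and $\gamma_\eta=d_\eta/a_\eta^3$. The Watson route produces every coefficient in closed form at once and gives the full expansion without appealing to Szeg\H{o}.
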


\begin{proof}
We start from the exact identity
\begin{equation}
\label{eq:sell-exact-proof}
s_\ell(\tau)
=\frac{\binom{2\ell}{\ell}}{4^\ell}
  \,\frac{\sinh^{2\ell}\!\tau}{\Leg_\ell(\cosh 2\tau)}.
\end{equation}
The first two coefficients follow from Stirling's formula and the endpoint Laplace expansion of the Legendre integral.  The full Szeg\H{o} asymptotic series gives the expansion to arbitrary order.

Stirling's formula gives
\begin{equation}
\label{eq:central-binomial-refined}
\frac{\binom{2\ell}{\ell}}{4^\ell}
=
\frac{1}{\sqrt{\pi\ell}}
\left(1-\frac{1}{8\ell}+\frac{1}{128\ell^2}+\mathcal{O}(\ell^{-3})\right).
\end{equation}
To obtain the first two Legendre correction terms directly, set $\eta=2\tau$ and use the Laplace
integral representation
\[
\Leg_\ell(\cosh\eta)=\frac{1}{\pi}\int_0^\pi \bigl(\cosh\eta+\sinh\eta\cos\theta\bigr)^\ell\,d\theta
=\frac{1}{\pi}\int_0^\pi e^{\ell\varphi_\eta(\theta)}\,d\theta,
\]
where $\varphi_\eta(\theta):=\log(\cosh\eta+\sinh\eta\cos\theta)$.
Expanding at $\theta=0$ gives
\[
\varphi_\eta(\theta)
=
\eta-a_\eta\theta^2+c_\eta\theta^4+d_\eta\theta^6+\mathcal{O}_\eta(\theta^8),
\]
where
\[
a_\eta=\frac{1-e^{-2\eta}}{4},
\qquad
c_\eta=-\frac{(1-e^{-2\eta})(1-3e^{-2\eta})}{96},
\qquad
d_\eta=-\frac{(1-e^{-2\eta})(2-15e^{-2\eta}+15e^{-4\eta})}{2880}.
\]
Choose $\delta\in(0,\pi)$ so small that $\theta=0$ is the only maximum of $\varphi_\eta$ on $[0,\delta]$ and
\[
  \varphi_\eta(\theta)\le \eta-\frac{a_\eta}{2}\theta^2
  \qquad (0\le\theta\le\delta).
\]
The integral over $[\delta,\pi]$ is $O_\eta(e^{(\eta-\varepsilon_\eta)\ell})$ for some $\varepsilon_\eta>0$.  On $[0,\delta]$, set $\theta=u/\sqrt{\ell a_\eta}$.  The standard endpoint Laplace expansion gives
\[
\Leg_\ell(\cosh\eta)
=
\frac{e^{\ell\eta}}{\pi\sqrt{\ell a_\eta}}
\int_0^\infty e^{-u^2}
\left(
1+\frac{\beta_\eta u^4}{\ell}
+\frac{\gamma_\eta u^6+\frac12\beta_\eta^2u^8}{\ell^2}
\right)\,du
+O_\eta\!\left(e^{\ell\eta}\ell^{-7/2}\right),
\]
where
\[
\beta_\eta=\frac{c_\eta}{a_\eta^2}=\frac{\Coth\eta-2}{6},
\qquad
\gamma_\eta=\frac{d_\eta}{a_\eta^3}.
\]
The error estimate follows by applying the Taylor expansion on a shrinking neighborhood of the origin and using the Gaussian bound above on its complement.
Using
\[
\int_0^\infty e^{-u^2}\,du=\frac{\sqrt{\pi}}{2},\qquad
\int_0^\infty u^4e^{-u^2}\,du=\frac{3\sqrt{\pi}}{8},
\]
\[
\int_0^\infty u^6e^{-u^2}\,du=\frac{15\sqrt{\pi}}{16},\qquad
\int_0^\infty u^8e^{-u^2}\,du=\frac{105\sqrt{\pi}}{32},
\]
we find
\[
\Leg_\ell(\cosh\eta)
=
\frac{e^{\ell\eta}}{\sqrt{4\pi\ell a_\eta}}
\left(
1+\frac{3\beta_\eta}{4\ell}
+\frac{\frac{15}{8}\gamma_\eta+\frac{105}{32}\beta_\eta^2}{\ell^2}
+\mathcal{O}_\eta(\ell^{-3})
\right).
\]
Since $4a_\eta=1-e^{-2\eta}=2e^{-\eta}\sinh\eta$ and
\[
\frac{3\beta_\eta}{4}=\frac{\Coth\eta-2}{8},
\qquad
\frac{15}{8}\gamma_\eta+\frac{105}{32}\beta_\eta^2=\frac{(3\Coth\eta-2)^2}{128},
\]
this becomes
\begin{equation}
\label{eq:legendre-refined}
\Leg_\ell(\cosh\eta)
=
\frac{e^{(\ell+1/2)\eta}}{\sqrt{2\pi\ell\sinh\eta}}
\left(
1+\frac{\Coth\eta-2}{8\ell}
+\frac{(3\Coth\eta-2)^2}{128\,\ell^2}
+\mathcal{O}_\eta(\ell^{-3})
\right).
\end{equation}
Substituting \eqref{eq:central-binomial-refined} and \eqref{eq:legendre-refined} with
$\eta=2\tau$ into \eqref{eq:sell-exact-proof} yields
\[
s_\ell(\tau)
=
\sqrt{2\sinh(2\tau)}\,e^{-\tau}
\left(\frac{\sinh\tau}{e^\tau}\right)^{2\ell}
\left(
1+\frac{a_1(\tau)}{\ell}+\frac{a_2(\tau)}{\ell^2}
+\mathcal{O}_\tau(\ell^{-3})
\right),
\]
where
\[
a_1(\tau)
=
-\frac18-\frac{\Coth(2\tau)-2}{8}
=
\frac{1-\Coth(2\tau)}{8},
\]
and
\[
\begin{aligned}
a_2(\tau)
&=
\frac{1}{128}
+\frac{\Coth(2\tau)-2}{64}
+\frac{(\Coth(2\tau)-2)^2}{64}
-\frac{(3\Coth(2\tau)-2)^2}{128}\\
&=-\frac{(\Coth(2\tau)-1)(7\Coth(2\tau)+1)}{128}.
\end{aligned}
\]
Because $\sinh\tau/e^\tau=(1-e^{-2\tau})/2=r(\tau)$, this gives
\eqref{eq:a12-explicit}. The complete expansion \eqref{eq:sell-expansion} then follows by
combining \eqref{eq:central-binomial-refined} with the full Legendre asymptotic series of
Szeg\H{o} \cite[Ch.~VIII, Thm.~8.21.2]{Szego}.  See also \cite[\S14.15]{DLMF}.
\end{proof}

\subsection{Finite polylogarithmic expansion}

\begin{theorem}[Finite polylogarithmic expansion]
\label{thm:zeta-polylog-expansion}
For each fixed $\tau>0$, put $r=r(\tau)$, $C_0=C_0(\tau)$, and $q(s)=r^{2s}$.  For every $N\ge2$ there are polynomials $p_{j,\tau}(s)$, $1\le j\le N$, whose coefficients depend smoothly on $\tau$ and for which $p_{j,\tau}(0)=0$, such that for $\Re(s)>0$
\begin{equation}
\label{eq:zeta-polylog-split}
Z_{\mathsf T}(s,\tau)
=
C_0^s\frac{1+q(s)}{(1-q(s))^2}
+C_0^s\sum_{j=1}^{N}p_{j,\tau}(s)\Bigl(2\Li_{j-1}(q(s))+
\Li_{j}(q(s))\Bigr)
+\mathcal R_N(s,\tau),
\end{equation}
where $\mathcal R_N$ is holomorphic in $\Re(s)>0$.  On every compact set $K\Subset\{\Re(s)>0\}$, the defining remainder series and all of its $s$-derivatives converge uniformly.  The displayed finite sum extends to the universal cover of
\begin{equation}
\label{eq:zeta-pole-lattice}
\C\setminus\mathcal{P}_\tau,
\qquad
\mathcal{P}_\tau:=\{s\in\C:q(s)=1\}
=\left\{\frac{\pi i k}{\log r(\tau)}:k\in\mathbb Z\right\},
\end{equation}
and has polylogarithmic singularities along $\mathcal P_\tau$.  The first term on the right-hand side of \eqref{eq:zeta-polylog-split} has a
double pole at each point of $\mathcal P_\tau$, while the remaining displayed terms have at most simple
poles together with the logarithmic terms prescribed by Jonqui\`ere's expansion of the
polylogarithm.  At $s=0$ the polar part of this first term is
\begin{equation}
\label{eq:zeta-laurent-at-0}
C_0^s\frac{1+q(s)}{(1-q(s))^2}
=
\frac{1}{2(\log r(\tau))^2}\,\frac{1}{s^2}
+\frac{\log C_0(\tau)-\log r(\tau)}{2(\log r(\tau))^2}\,\frac{1}{s}
+\mathcal O(1).
\end{equation}
Both coefficients vary with $\tau$.
\end{theorem}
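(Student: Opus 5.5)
The plan is to insert the complete expansion of Proposition~\ref{prop:sell-expansion} into each summand of \eqref{eq:T-zeta}, raise it to the complex power $s$, and sum the resulting terms one power of $\ell$ at a time as polylogarithms. Write $s_\ell(\tau)=C_0r^{2\ell}(1+x_\ell)$ with $x_\ell=\sum_{j=1}^N a_j/\ell^j+O_{N,\tau}(\ell^{-N-1})$. Choose $\ell_0$ so large that $\abs{x_\ell}\le1/2$ for $\ell\ge\ell_0$. Then $s_\ell(\tau)^s=C_0^s\,q(s)^\ell\,(1+x_\ell)^s$, and the binomial series gives
\[
  (1+x_\ell)^s=1+\sum_{j=1}^N\frac{p_{j,\tau}(s)}{\ell^j}+E_{N}(\ell,s).
\]
Here $p_{j,\tau}(s)$ is the coefficient of $\ell^{-j}$ in $\sum_{m\ge1}\binom{s}{m}\bigl(\sum_i a_i\ell^{-i}\bigr)^m$. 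Each $p_{j,\tau}$ is a polynomial in $s$ of degree at most $j$, with coefficients polynomial in $a_1(\tau),\dots,a_j(\tau)$, hence smooth in $\tau$. Since every $\binom sm$ with $m\ge1$ vanishes at $s=0$, we get $p_{j,\tau}(0)=0$. For example, $p_{1,\tau}(s)=s\,a_1(\tau)$.

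Next I multiply by $2\ell+1$ and use $(2\ell+1)\ell^{-j}=2\ell^{-(j-1)}+\ell^{-j}$. Summing over $\ell\ge1$ turns $\sum_\ell q^\ell\ell^{-m}$ into $\Li_m(q)$; in particular $\Li_0(q)=q/(1-q)$ and $\Li_1(q)=-\log(1-q)$. The zeroth-order term $C_0^s\sum_{\ell\ge0}(2\ell+1)q^\ell$ equals $C_0^s(1+q)/(1-q)^2$, which is the first displayed term. Everything else goes into $\mathcal R_N$:
\begin{itemize}[leftmargin=2.2em]
\item the finitely many indices $\ell<\ell_0$, together with the $\ell=0$ correction $1-C_0^s$, for which the displayed terms minus the true summands form a finite sum of entire functions of $s$;
\item the tail $C_0^s\sum_{\ell\ge\ell_0}(2\ell+1)q^\ell E_N(\ell,s)$.
\end{itemize}

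The step I expect to be the main obstacle is the uniform control of $E_N(\ell,s)$. I need $\abs{E_N(\ell,s)}\le C_{K,N,\tau}\,\ell^{-N-1}$ uniformly for $s$ in a compact set $K$. The plan is to use that $\sum_m\abs{\binom sm}2^{-m}$ is bounded on $K$ and that $x_\ell=O(\ell^{-1})$. The truncation error then splits into the tail $m>N$, which is $O(\ell^{-N-1})$, and the $O(\ell^{-N-1})$ error inside $x_\ell$, propagated through finitely many $m$. Given this bound, $\abs{q^\ell}=r^{2\ell\Re s}$ decays geometrically for $\Re s>0$, so the tail series converges uniformly on $K$. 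By Weierstrass, $\mathcal R_N$ is holomorphic and its derivatives converge uniformly as well. The continuation of the finite sum is immediate, since $\Li_m$ continues to the universal cover of $\C\setminus\{1\}$ and $q(s)=1$ exactly on $\mathcal P_\tau$. The main term has a double pole at each point of $\mathcal P_\tau$. The $\Li_0$ terms give simple poles, and the $\Li_m$ with $m\ge1$ give the Jonqui\`ere logarithmic terms $(1-q)^{m-1}\log(1-q)$ plus regular parts.

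Finally, for \eqref{eq:zeta-laurent-at-0} I put $L=\log r<0$ and $q=e^{2sL}$. Then $1+q=2+2sL+O(s^2)$ and $(1-q)^2=4s^2L^2(1+2sL+O(s^2))$, so
\[
  \frac{1+q}{(1-q)^2}=\frac{1}{2L^2s^2}\bigl(1-sL+O(s^2)\bigr).
\]
Multiplying by $C_0^s=1+s\log C_0+O(s^2)$ gives the stated coefficients $1/(2L^2)$ and $(\log C_0-L)/(2L^2)$. Both depend on $\tau$ through $r(\tau)$ and $C_0(\tau)$.
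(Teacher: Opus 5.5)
Your proposal is correct and follows essentially the same route as the paper. Both insert the complete expansion of Proposition~\ref{prop:sell-expansion}, Taylor-expand $(1+u)^s$ to get polynomials $p_{j,\tau}$ with $p_{j,\tau}(0)=0$, resum via $(2\ell+1)\ell^{-j}=2\ell^{1-j}+\ell^{-j}$ into $2\Li_{j-1}+\Li_j$, control the remainder on compacts with the geometric factor $r^{2\ell\Re s}$, and Laurent-expand the first term at $s=0$.

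Your cutoff $\ell_0$ with $\abs{x_\ell}\le 1/2$ and your explicit binomial-series bound on $E_N(\ell,s)$ make rigorous the uniform Taylor remainder that the paper only asserts. One small correction: for $m\ge2$, $\Li_m$ continues to the universal cover of $\C\setminus\{0,1\}$, not of $\C\setminus\{1\}$. This does no harm here because $q(s)\neq0$.
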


\begin{remark}[Scope of the continuation]
\label{rem:zeta-polylog-not-meromorphic}
The functions $\Li_m(q)$ with positive integer $m$ have logarithmic singularities at $q=1$.  Thus the truncated expression in \eqref{eq:zeta-polylog-split} is generally not meromorphic.  The theorem continues only the displayed finite expression.  It gives no continuation of the remainder $\mathcal R_N$ or of the full zeta function across $\Re s=0$.
\end{remark}

\begin{proof}
We write $r=r(\tau)$ and $C_0=C_0(\tau)$ for brevity.

\smallskip\noindent\emph{Reduction to polylogarithms.}
Fix $N\ge2$ and use \eqref{eq:sell-expansion} for $\ell\ge1$.  The $\ell=0$ term in $Z_{\mathsf T}$ is $1$.  The first term below contributes $C_0^s$ at $\ell=0$, and the entire correction $1-C_0^s$ is absorbed into the holomorphic remainder $\mathcal R_N$.  For $\ell\ge1$ and $\Re(s)>0$,
\[
s_\ell(\tau)^s
=\bigl(C_0 r^{2\ell}\bigr)^s
\Bigl(1+\frac{a_1}{\ell}+\cdots+\frac{a_N}{\ell^N}+\mathcal O_{N,\tau}(\ell^{-N-1})\Bigr)^s.
\]
Taylor expansion of $(1+u)^s$ at $u=0$, applied to the finite polynomial in $\ell^{-1}$,
gives
\begin{equation}
\label{eq:power-expansion}
\Bigl(1+\frac{a_1}{\ell}+\cdots+\frac{a_N}{\ell^N}+\mathcal O_{N,\tau}(\ell^{-N-1})\Bigr)^s
=
1+\sum_{j=1}^{N}\frac{p_{j,\tau}(s)}{\ell^j}+\mathcal O_{N,\tau,K}(\ell^{-N-1})
\end{equation}
uniformly for $s$ in compact subsets $K\Subset\C$.  Each $p_{j,\tau}$ is a polynomial in $s$ with
coefficients depending smoothly on $\tau$, and $p_{j,\tau}(0)=0$ because the left hand side of
\eqref{eq:power-expansion} equals $1$ at $s=0$.
Substitution into \eqref{eq:T-zeta} gives, for $\Re(s)>0$,
\[
Z_{\mathsf T}(s,\tau)=
C_0^s\sum_{\ell=0}^{\infty}(2\ell+1)q(s)^\ell
+C_0^s\sum_{j=1}^{N}p_{j,\tau}(s)
\sum_{\ell=1}^{\infty}(2\ell+1)\ell^{-j}q(s)^\ell
+\mathcal R_N(s,\tau).
\]
The first sum is
\begin{equation}
\label{eq:geometric-sum}
\sum_{\ell=0}^{\infty}(2\ell+1)q^\ell=\frac{1+q}{(1-q)^2}.
\end{equation}
For the remaining sums one uses the defining series of the polylogarithm,
\[
\Li_m(q)=\sum_{\ell=1}^{\infty}\ell^{-m}q^\ell,
\qquad |q|<1,
\]
with the convention $\Li_0(q)=q/(1-q)$.  Since
$(2\ell+1)\ell^{-j}=2\ell^{1-j}+\ell^{-j}$, one obtains the exact identity
\begin{equation}
\label{eq:Fj-polylog}
F_j(q):=\sum_{\ell=1}^{\infty}(2\ell+1)\ell^{-j}q^\ell
=2\Li_{j-1}(q)+\Li_{j}(q),
\qquad |q|<1.
\end{equation}
This proves \eqref{eq:zeta-polylog-split} in the original half-plane of convergence.

\smallskip\noindent\emph{Remainder estimate in the convergent half-plane.}
Let $K\Subset\{\Re(s)>0\}$ and put $\sigma_K=\inf_{s\in K}\Re(s)>0$.  The remainder in \eqref{eq:power-expansion} satisfies
\[
\bigl|(2\ell+1)(C_0r^{2\ell})^s\mathcal O_{N,\tau,K}(\ell^{-N-1})\bigr|
\le C_K(2\ell+1)\ell^{-N-1}r^{2\ell\sigma_K}.
\]
This majorant is summable.  Each differentiation in $s$ introduces at most a polynomial factor in $\ell$, while $r^{2\ell\sigma_K}$ still gives exponential decay.  The remainder series and all of its $s$-derivatives therefore converge uniformly on $K$, and $\mathcal R_N$ is holomorphic in $\Re(s)>0$.

\smallskip\noindent\emph{Singularities of the displayed finite sum.}
The displayed finite sum in \eqref{eq:zeta-polylog-split} is a composition of entire functions of $s$
with the classical continuations of $\Li_m(q)$.  The only possible singularities
occur when $q(s)=1$, namely at \eqref{eq:zeta-pole-lattice}.  The leading term \eqref{eq:geometric-sum} has a double pole there.  The term with $j=1$ contains
$2\Li_0(q)$ and hence can contribute a simple pole.  All terms with
$\Li_m$, $m\ge1$, are governed near $q=e^{\mu}=1$ by Jonqui\`ere's formula
\begin{equation}
\label{eq:jonquiere-local}
\Li_m(e^{\mu})
=
\frac{\mu^{m-1}}{(m-1)!}\bigl(H_{m-1}-\log(-\mu)\bigr)
+\sum_{\substack{k\ge0\\ k\ne m-1}}\frac{\zeta(m-k)}{k!}\mu^k,
\qquad m\in\N,
\end{equation}
valid in the slit neighborhood $|\mu|<\epsilon$, $\mu\notin[0,\infty)$, with $H_0=0$ and the principal branch of $\log(-\mu)$.  On the branch obtained by continuation from $|q|<1$, one has $\Re\mu<0$.  This is the branch reached from the half-plane where the defining series converges.  Thus the remaining terms have no singularity worse than a simple pole from
$\Li_0$ together with logarithmic terms.

\smallskip\noindent\emph{Polar part at the origin.}
Let $L=\log r<0$ and put $x=2Ls$.  Since $q(s)=e^x$, the elementary Laurent expansion
\[
  \frac{1+e^x}{(1-e^x)^2}=\frac{2}{x^2}-\frac{1}{x}+O(1)
  \qquad (x\to0)
\]
gives
\[
  \frac{1+q(s)}{(1-q(s))^2}
  =\frac{1}{2L^2s^2}-\frac{1}{2Ls}+O(1).
\]
Multiplying by $C_0^s=1+s\log C_0+O(s^2)$ proves \eqref{eq:zeta-laurent-at-0}.

It remains to check that the remaining terms do not alter this polar part.  Since $p_{1,\tau}(0)=0$, write $p_{1,\tau}(s)=s\widetilde p_{1,\tau}(s)$.  Also
\[
  \Li_0(q(s))=\frac{q(s)}{1-q(s)}=-\frac{1}{2Ls}+O(1),
\]
so $p_{1,\tau}(s)\,2\Li_0(q(s))$ is holomorphic at $s=0$.  The terms involving $\Li_m$ with $m\ge1$ contribute logarithmic terms multiplied by functions that vanish at $s=0$.  They therefore add no pole at the origin.  Thus the coefficients displayed in \eqref{eq:zeta-laurent-at-0} are exactly the polar coefficients of the first term in \eqref{eq:zeta-polylog-split}.
\end{proof}

\begin{corollary}[Monotonicity of the $s^{-1}$ coefficient]
\label{cor:zeta-residue-monotonicity}
Let
\[
\mathcal R(\tau):=\operatorname*{Res}_{s=0}\left(C_0(\tau)^s\frac{1+r(\tau)^{2s}}{(1-r(\tau)^{2s})^2}\right)
=\frac{\log C_0(\tau)-\log r(\tau)}{2(\log r(\tau))^2}.
\]
Then $\mathcal R$ is strictly increasing on $(0,\infty)$.  Hence the coefficient of $s^{-1}$ in the first term of \eqref{eq:zeta-polylog-split} determines $\tau$ within this family.
\end{corollary}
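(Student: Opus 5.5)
The plan is a direct calculus argument after a substitution that makes both $\log C_0$ and $\log r$ elementary in one variable. Put $x=e^{-2\tau}$, so that $\tau\mapsto x$ is a strictly decreasing bijection $(0,\infty)\to(0,1)$. It then suffices to show that $\mathcal R$, viewed as a function of $x$, is strictly decreasing on $(0,1)$.

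\emph{Rewriting $\mathcal R$ in the variable $x$.}
We have
\[
  r=\tfrac12(1-x),
  \qquad\text{so}\qquad
  \log r=\log(1-x)-\log 2 .
\]
Since $2\sinh 2\tau=e^{2\tau}(1-e^{-4\tau})$ and $C_0=\sqrt{2\sinh 2\tau}\,e^{-\tau}$, we get
\[
  \log C_0=\tfrac12\log(1-x^2)=\tfrac12\bigl(\log(1-x)+\log(1+x)\bigr).
\]
Hence the numerator becomes
\[
  A(x):=\log C_0-\log r=\tfrac12\log\frac{1+x}{1-x}+\log 2>0 .
\]
The denominator is $2B(x)^2$, where
\[
  B(x):=-\log r=\log 2-\log(1-x)>0 .
\]
Thus $\mathcal R=A/(2B^2)$.

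\emph{Sign of the derivative.}
Compute
\[
  A'(x)=\frac{1}{1-x^2},
  \qquad
  B'(x)=\frac{1}{1-x}.
\]
Then
\[
  \frac{d}{dx}\,\frac{A}{2B^2}
  =\frac{A'B-2AB'}{2B^3}
  =\frac{1}{2B^3(1-x)}\left(\frac{B}{1+x}-2A\right).
\]
Since $B>0$ and $1-x>0$, it remains to show that $B/(1+x)<2A$. This holds because
\[
  \frac{B}{1+x}<B=\log 2-\log(1-x),
\]
while
\[
  2A=\log(1+x)-\log(1-x)+2\log 2 .
\]
The latter exceeds $B$, since $\log(1+x)>0$ and $2\log 2>\log 2$. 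So the derivative is strictly negative on $(0,1)$. Therefore $\mathcal R$ is strictly decreasing in $x$, and hence strictly increasing in $\tau$.

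\emph{Conclusion.}
Strict monotonicity makes $\tau\mapsto\mathcal R(\tau)$ injective. Hence the $s^{-1}$ coefficient computed in \eqref{eq:zeta-laurent-at-0} determines $\tau$.

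The only delicate point is that the numerator and the denominator are both monotone in the same direction. Monotonicity therefore does not follow from the two separately, and one needs the derivative comparison. With the substitution $x=e^{-2\tau}$ this comparison reduces to the elementary inequality $B/(1+x)<2A$ above.
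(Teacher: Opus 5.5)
Your proof is correct and follows the paper's argument: the same substitution $x=e^{-2\tau}$, the same quotient-rule derivative, and the same key inequality. Your $A$ and $B$ are the paper's $B$ and $A$. The only differences are cosmetic: you differentiate in $x$ and then use that $\tau\mapsto x$ is decreasing, where the paper applies $dx/d\tau=-2x$. You also bound $B/(1+x)<B<2A$, where the paper writes $2A-B/(1+x)$ as the explicitly positive sum $xB/(1+x)+\log(2(1+x))$.
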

\begin{proof}
Set $x=e^{-2\tau}$ and $A=\log\frac{2}{1-x}>0$.  Since $C_0(\tau)=\sqrt{1-x^2}$ and $r(\tau)=(1-x)/2$,
\[
\mathcal R(\tau)=\frac{B(x)}{2A(x)^2},
\qquad
B(x)=\log2+\frac12\log\frac{1+x}{1-x}.
\]
A direct differentiation, using $dx/d\tau=-2x$, gives
\[
\frac{d\mathcal R}{d\tau}
=\frac{x}{A^3(1-x)}\left(2B-\frac{A}{1+x}\right)
=\frac{x}{A^3(1-x)}\left(\frac{xA}{1+x}+\log(2(1+x))\right)>0.
\]
Thus $\mathcal R$ is strictly increasing.
\end{proof}

\section{Bargmann--Fock calculation of the leading coefficient}
\label{sec:BF-leading}
To compute the coefficient $C_{\BF}(\tau)$ in \eqref{eq:intro-factorization}, we linearize the two contact structures at the identity and identify their horizontal planes with the standard Heisenberg plane.  The linearization produces a symplectic map $S_\tau$ between the two horizontal planes.  The matrix coefficient of the associated metaplectic operator on the Gaussian ground state equals $C_{\BF}(\tau)$.  No global Fourier integral description of $\Pi_h\Pi_\tau$ is used.

\subsection{The two real contact cones}
\begin{proposition}[Distinct contact cones]
\label{prop:no-common-cone}
Normalize the Hopf and Grauert contact forms at the identity by
\[
  \alpha_h(E_3)=1,
  \quad \alpha_h(E_1)=\alpha_h(E_2)=0,
\]
\[
  \alpha_\tau(E_2)=1,
  \quad \alpha_\tau(E_1)=\alpha_\tau(E_3)=0.
\]
Then
\[
  H_h=\ker\alpha_h=\Span\{E_1,E_2\},
  \qquad
  H_\tau=\ker\alpha_\tau=\Span\{E_1,E_3\}.
\]
The positive cones
\[
  \Sigma_h=\{(y,r\alpha_h(y)):r>0\},
  \qquad
  \Sigma_\tau=\{(y,s\alpha_\tau(y)):s>0\}
\]
are disjoint in $T^*Y\setminus0$.
\end{proposition}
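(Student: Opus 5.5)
The kernel statements are immediate from the normalizations.  The annihilator of $\alpha_h$ at the identity is $\{E_1,E_2\}$, because $\alpha_h(E_1)=\alpha_h(E_2)=0$ and $\alpha_h(E_3)=1\neq0$.  Likewise $\ker\alpha_\tau=\Span\{E_1,E_3\}$ by Proposition~\ref{prop:Lbar-grauert}.  Both forms are left-invariant, so these descriptions propagate to every point $g\in Y$ by left translation.  Concretely, I view $\alpha_h$ and $\alpha_\tau$ as the left-invariant one-forms dual to $E_3$ and $E_2$ in the left-invariant coframe $\{E_1^*,E_2^*,E_3^*\}$.  This is consistent with the remark on the two Reeb directions, $E_3$ for Hopf and $E_2$ for Grauert.

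For disjointness I will trivialize $T^*Y\simeq Y\times\mathfrak g^*$ by left translation.  In this trivialization,
\[
  \Sigma_h=\{(g,rE_3^*):r>0\},
  \qquad
  \Sigma_\tau=\{(g,sE_2^*):s>0\}.
\]
Suppose a point lies in both cones.  It must have the same base point $g$, and then $rE_3^*=sE_2^*$ in $\mathfrak g^*$.  Since $E_2^*$ and $E_3^*$ are linearly independent, this forces $r=s=0$.  That contradicts positivity, and indeed such a point would lie in the zero section, which is excluded from $T^*Y\setminus0$.  So the cones are disjoint.  In fact the argument shows more: the full lines $\R\alpha_h(g)$ and $\R\alpha_\tau(g)$ meet only at $0$, so even the opposite cones do not intersect.

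The only delicate point is the trivialization.  I must check that the contact forms really are left-invariant and given by the dual-basis elements at every point, not just at $e$.  For Hopf, this follows from the CR structure being the horizontal lift, which is invariant under $\SO(3)$ acting on $Y$.  For Grauert, it follows from homogeneity of $\partial M_\tau$, as used in Proposition~\ref{prop:Lbar-grauert}.  In both cases the contact line field is left-invariant, so the normalization at $e$ fixes a left-invariant form up to a positive scalar, and a positive scalar does not change the cones.  Once this is in place, the rest is linear algebra in $\mathfrak g^*$.
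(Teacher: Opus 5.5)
Your proposal is correct and follows the paper's argument. A common point would require $r\alpha_h=s\alpha_\tau$ at the same base point, which forces $r=0$: the paper gets this by evaluating on $E_3$, you get it from the linear independence of $E_2^*$ and $E_3^*$. In both arguments, left invariance carries the conclusion from the identity to every point of $Y$.
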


\begin{proof}
If $r\alpha_h=s\alpha_\tau$ with $r,s>0$, evaluation on $E_3$ gives $r=0$, a contradiction.  Left invariance gives the same conclusion at every point.
\end{proof}

The horizontal spaces share the line $\R E_1$, but the positive covector cones do not meet.  The composition calculus of \cite{BdMG} presupposes that the two Szeg\H{o} projectors are associated with the same symplectic cone.  Since $\Sigma_h\cap\Sigma_\tau=\emptyset$, that calculus does not apply to the product $\Pi_h\Pi_\tau$.  The leading coefficient is instead computed below from the linear model at a single point.  

\subsection{Linear model at the identity}
\begin{lemma}[Differential of the complexified geodesic-flow map]
\label{lem:complex-flow-scales}
Let
\[
  z_\tau=\ii\sinh\tau\,e_1+\cosh\tau\,e_3,
  \qquad
  \Phi_\tau(g)=gz_\tau.
\]
At the identity,
\begin{equation}
\label{eq:dPhi}
  d\Phi_\tau(E_1)=-\cosh\tau\,e_2,
  \qquad
  d\Phi_\tau(E_3)=\ii\sinh\tau\,e_2.
\end{equation}
Thus the two directions of the Grauert contact plane map to the real and imaginary axes of $\C e_2$ with scale factors $\cosh\tau$ and $\sinh\tau$.
\end{lemma}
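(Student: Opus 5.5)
The plan is a direct computation of the differential of the orbit map $g\mapsto gz_\tau$ at the identity. For a one-parameter subgroup $\exp(sX)$ with $X\in\mathfrak{so}(3)$ we have
\[
  \frac{d}{ds}\Big|_{s=0}\exp(sX)z_\tau=Xz_\tau,
\]
because $\SO(3)$ acts on $\C^3$ by the complex-linear extension of its real matrix action. This gives $d\Phi_\tau(X)=Xz_\tau=\ii\sinh\tau\,Xe_1+\cosh\tau\,Xe_3$.

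Next we evaluate on the basis vectors using the explicit matrices \eqref{eq:so3-matrices}. For $E_1$ we read off $E_1e_1=0$ and $E_1e_3=-e_2$, which gives $d\Phi_\tau(E_1)=-\cosh\tau\,e_2$. For $E_3$ we read off $E_3e_1=e_2$ and $E_3e_3=0$, which gives $d\Phi_\tau(E_3)=\ii\sinh\tau\,e_2$. These are exactly the values already recorded in the proof of Proposition~\ref{prop:Lbar-grauert}, so the lemma can also be cited from there.

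It remains to interpret the result. Both images lie on the complex line $\C e_2$; this line is tangent to $Q^2$ at $z_\tau$, since $z_\tau\cdot e_2=0$. The image of $E_1$ is a real multiple of $e_2$ with modulus $\cosh\tau$. The image of $E_3$ is $\ii$ times a real multiple of $e_2$ with modulus $\sinh\tau$. Hence the two directions of $H_\tau=\Span\{E_1,E_3\}$ (Proposition~\ref{prop:Lbar-grauert}) map onto the real and imaginary axes of $\C e_2$ with scale factors $\cosh\tau$ and $\sinh\tau$.

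There is no real obstacle here. The only points needing care are the sign conventions in the matrices $E_j$ and the fact that the action is complex-linear on $\C^3$, so that the factor $\ii$ passes through unchanged.
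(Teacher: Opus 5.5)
Your proposal is correct and follows the paper's proof: differentiate the orbit map to get $d\Phi_\tau(X)=Xz_\tau$, then read off $E_1e_1=0$, $E_1e_3=-e_2$, $E_3e_1=e_2$, $E_3e_3=0$ from the matrices. Your added check that $e_2$ is tangent to $Q^2$ at $z_\tau$ (since $z_\tau\cdot e_2=0$) is a harmless extra detail that the paper leaves implicit.
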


\begin{proof}
The rotation matrices satisfy $E_1e_1=0$, $E_1e_3=-e_2$, $E_3e_1=e_2$, and $E_3e_3=0$.  Substitution in $d\Phi_\tau(X)=Xz_\tau$ gives \eqref{eq:dPhi}.
\end{proof}

The scale factors in Lemma~\ref{lem:complex-flow-scales} give the following linear map between the Heisenberg contact models.

\begin{proposition}[Contact map between the Heisenberg models]
\label{prop:linear-contact-map}
Put
\[
  c_\tau=\cosh\tau,
  \qquad
  s_\tau=\sinh\tau,
  \qquad
  \lambda_\tau=c_\tau s_\tau.
\]
Choose oriented Heisenberg coordinates $(\theta,q,p)$ for the Hopf structure and $(\varphi,Q,P)$ for the Grauert structure, with the horizontal $Q$- and $P$-axes along the contact directions $E_1$ and $E_3$, and the horizontal $q$- and $p$-axes along the real and imaginary axes of $\C e_2$ that receive them in Lemma~\ref{lem:complex-flow-scales}.  The Heisenberg contact forms are
\[
  \alpha_h^0=d\theta+\frac12(q\,dp-p\,dq),
  \qquad
  \alpha_\tau^0=d\varphi+\frac12(Q\,dP-P\,dQ).
\]
Then
\begin{equation}
\label{eq:Ftau}
  F_\tau(\varphi,Q,P)
  =(\lambda_\tau\varphi,c_\tau Q,s_\tau P)
\end{equation}
satisfies
\begin{equation}
\label{eq:contact-conformal}
  F_\tau^*\alpha_h^0=\lambda_\tau\alpha_\tau^0.
\end{equation}
The cone lift
\begin{equation}
\label{eq:chitau}
  \chi_\tau(\varphi,Q,P,\sigma)
  =(\lambda_\tau\varphi,c_\tau Q,s_\tau P,\sigma/\lambda_\tau)
\end{equation}
preserves the Liouville form and is therefore exact symplectic.  After the conformal factor is removed, the horizontal block is
\begin{equation}
\label{eq:S-tau}
  S_\tau
  =\begin{pmatrix}
     \sqrt{\Coth\tau}&0\\[2pt]
     0&\sqrt{\tanh\tau}
   \end{pmatrix}
  \in\Sp(2,\R).
\end{equation}
Set
\begin{equation}
\label{eq:b-tau}
  b_\tau=\frac{\Coth\tau-1}{\Coth\tau+1}=e^{-2\tau}.
\end{equation}
\end{proposition}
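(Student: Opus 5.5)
The plan is to verify each assertion of Proposition~\ref{prop:linear-contact-map} by direct computation in the Heisenberg coordinates. The only geometric input is Lemma~\ref{lem:complex-flow-scales}. It says that $E_1$ and $E_3$ are carried to the real and imaginary axes of $\C e_2$ with scale factors $\cosh\tau$ and $\sinh\tau$. The sign in $d\Phi_\tau(E_1)=-\cosh\tau\,e_2$ is absorbed into the choice of orientation of the $q$-axis, which is what "oriented Heisenberg coordinates" permits. So the horizontal part of the comparison map is $(Q,P)\mapsto(c_\tau Q,s_\tau P)$. The vertical coordinate is then forced by requiring \eqref{eq:contact-conformal}.

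First, I would pull back $\alpha_h^0$ under $F_\tau$:
\[
F_\tau^*\alpha_h^0=\lambda_\tau\,d\varphi+\tfrac12\bigl(c_\tau Q\,s_\tau\,dP-s_\tau P\,c_\tau\,dQ\bigr)=\lambda_\tau\alpha_\tau^0 .
\]
This gives \eqref{eq:contact-conformal}.

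Second, for the cone lift, I identify the symplectization of each contact structure with the positive multiples $\sigma\alpha^0$. The Liouville form there is the restriction of the canonical one-form, namely $\sigma\alpha^0$ at the point $(\cdot,\sigma)$. Under $\chi_\tau$ the Hopf Liouville form $\sigma'\alpha_h^0$ pulls back, with $\sigma'=\sigma/\lambda_\tau$, to
\[
(\sigma/\lambda_\tau)\,\lambda_\tau\alpha_\tau^0=\sigma\alpha_\tau^0 .
\]
So $\chi_\tau$ preserves the Liouville forms. Taking $d$, it is exact symplectic. It is also homogeneous of degree one in $\sigma$, so it is a cone map.

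Third, I would remove the conformal factor by composing with the Heisenberg dilation
\[
\delta(\theta,q,p)=(\lambda_\tau^{-1}\theta,\lambda_\tau^{-1/2}q,\lambda_\tau^{-1/2}p),
\]
which satisfies $\delta^*\alpha_h^0=\lambda_\tau^{-1}\alpha_h^0$. The composite $\delta\circ F_\tau$ is then a strict contactomorphism. Its horizontal block is
\[
\diag\bigl(c_\tau/\sqrt{\lambda_\tau},\,s_\tau/\sqrt{\lambda_\tau}\bigr)=\diag\bigl(\sqrt{\Coth\tau},\sqrt{\tanh\tau}\bigr),
\]
which is \eqref{eq:S-tau}. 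It has determinant $1$, so it preserves $dq\wedge dp$ and lies in $\Sp(2,\R)$.

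Finally, for \eqref{eq:b-tau} I multiply numerator and denominator by $\sinh\tau$:
\[
\frac{\Coth\tau-1}{\Coth\tau+1}=\frac{\cosh\tau-\sinh\tau}{\cosh\tau+\sinh\tau}=\frac{e^{-\tau}}{e^{\tau}}=e^{-2\tau}.
\]

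The computations are routine. The one point I expect to need care is the precise meaning of "after the conformal factor is removed." I would fix it as composition with the Heisenberg dilation $\delta$, the unique dilation that rescales the contact form by $\lambda_\tau^{-1}$. I would also check that the orientation choices for $(q,p)$ make $S_\tau$ orientation-preserving rather than a reflection, so that it genuinely lies in $\Sp(2,\R)$.
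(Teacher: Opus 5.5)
Your proposal is correct and follows the paper's proof essentially step for step: the same pullback computation for \eqref{eq:contact-conformal}, the same substitution $\sigma'=\sigma/\lambda_\tau$ for the cone lift, and the same identity $\cosh\tau\pm\sinh\tau=e^{\pm\tau}$ for $b_\tau$. Your composition with the Heisenberg dilation $(\theta,q,p)\mapsto(\lambda_\tau^{-1}\theta,\lambda_\tau^{-1/2}q,\lambda_\tau^{-1/2}p)$ is a more explicit version of the paper's ``dividing by $\lambda_\tau^{1/2}$'' on the horizontal block.
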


\begin{proof}
Pulling back the horizontal term gives
\[
  F_\tau^*\frac12(q\,dp-p\,dq)
  =c_\tau s_\tau\,\frac12(Q\,dP-P\,dQ),
\]
which proves \eqref{eq:contact-conformal}.  If $\rho=\sigma/\lambda_\tau$, then
\[
  \chi_\tau^*(\rho\alpha_h^0)=\sigma\alpha_\tau^0.
\]
Thus $\chi_\tau$ preserves the Liouville form and its differential.  On horizontal variables, $dF_\tau=\diag(c_\tau,s_\tau)$ rescales the symplectic form by $\lambda_\tau$.  Dividing by $\lambda_\tau^{1/2}$ gives \eqref{eq:S-tau}.  Finally, \eqref{eq:b-tau} follows from $\cosh\tau\pm\sinh\tau=e^{\pm\tau}$.
\end{proof}

The matrix $S_\tau$ in \eqref{eq:S-tau} is a linear symplectic map between the two horizontal planes,
\[
  S_\tau=\diag\bigl(\sqrt{\Coth\tau},\,\sqrt{\tanh\tau}\,\bigr)\colon
  \underbrace{(Q,P)\text{-plane}}_{\text{Grauert horizontal plane}}
  \longrightarrow
  \underbrace{(q,p)\text{-plane}}_{\text{Hopf horizontal plane}},
\]
written with respect to oriented orthonormal bases of these two planes.  Both bases were fixed above through the real and imaginary axes of $\C e_2$ in Lemma~\ref{lem:complex-flow-scales}, and these choices are auxiliary.  Replacing the basis of either plane by another oriented orthonormal basis rotates that plane, and so replaces $S_\tau$ by $R\,S_\tau\,R'$ with $R,R'\in\mathrm{SO}(2)=\Sp(2,\R)\cap\mathrm{O}(2)$.  Under the metaplectic representation of Lemma~\ref{lem:metaplectic-gaussian} below, a rotation acts on the normalized Gaussian by a unimodular scalar, because the Gaussian is the ground state of the harmonic oscillator that generates the rotations.  Consequently
\[
  \abs{\ip{\Omega}{\mu(R\,S_\tau\,R')\Omega}}
  =\abs{\ip{\Omega}{\mu(S_\tau)\Omega}},
\]
and the Gaussian matrix coefficient computed below does not depend on the choice of bases.

\subsection{The metaplectic Gaussian-matrix coefficient}
\begin{lemma}
\label{lem:metaplectic-gaussian}
Let $S_a=\diag(a,a^{-1})\in\Sp(2,\R)$ with $a>0$.  For the normalized Gaussian $\Omega(x)=\pi^{-1/4}e^{-x^2/2}$,
\begin{equation}
\label{eq:gaussian-a}
  \abs{\ip{\Omega}{\mu(S_a)\Omega}}
  =\left(\frac{2a}{a^2+1}\right)^{1/2}.
\end{equation}
If $b=(a^2-1)/(a^2+1)$, the same coefficient is $(1-b^2)^{1/4}$.
\end{lemma}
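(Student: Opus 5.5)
The plan is to realize $\mu(S_a)$ concretely in the Schrödinger model on $L^2(\R)$. The diagonal symplectic map $S_a=\diag(a,a^{-1})$ is quantized, up to a unimodular phase (the metaplectic sign ambiguity, irrelevant after taking absolute values), by the unitary dilation
\[
  (\mu(S_a)f)(x)=a^{-1/2}f(x/a),
\]
or by $a^{1/2}f(ax)$, depending on convention. Either choice gives the same modulus below, since the answer is invariant under $a\mapsto a^{-1}$. I would justify this by checking that conjugation by this dilation sends the position operator to $a$ times position and momentum to $a^{-1}$ times momentum, which is the defining intertwining property of the metaplectic representation for $S_a$. Irreducibility and Schur's lemma then fix the operator up to a phase.

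Next, compute the Gaussian integral directly. We have $\mu(S_a)\Omega(x)=a^{-1/2}\pi^{-1/4}e^{-x^2/(2a^2)}$, and both functions are real and positive, so
\[
  \ip{\Omega}{\mu(S_a)\Omega}
  =\frac{a^{-1/2}}{\sqrt\pi}\int_\R e^{-\frac{x^2}{2}\left(1+a^{-2}\right)}\,dx
  =a^{-1/2}\left(\frac{2}{1+a^{-2}}\right)^{1/2}
  =\left(\frac{2a}{a^2+1}\right)^{1/2}.
\]
This proves \eqref{eq:gaussian-a}.

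For the second form, set $b=(a^2-1)/(a^2+1)$. Then
\[
  1-b^2=\frac{(a^2+1)^2-(a^2-1)^2}{(a^2+1)^2}=\frac{4a^2}{(a^2+1)^2},
\]
so $(1-b^2)^{1/4}=\bigl(2a/(a^2+1)\bigr)^{1/2}$, as claimed.

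The only point needing care is the first step: fixing the normalization and phase of $\mu(S_a)$, and making sure the chosen dilation is unitary. I would handle this with the explicit intertwining check together with the remark that the absolute value removes the $\pm$ metaplectic ambiguity. Everything else is a one-line Gaussian integral and an algebraic identity.
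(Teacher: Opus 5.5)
Your proposal is correct and follows essentially the paper's own argument: the paper likewise takes $\mu(S_a)f(x)=a^{-1/2}f(x/a)$ up to a phase, evaluates the same Gaussian integral to get $\bigl(2a/(a^2+1)\bigr)^{1/2}$, and deduces the second form from $1-b^2=4a^2/(a^2+1)^2$. Your added remarks on fixing $\mu(S_a)$ via the intertwining relation and Schur's lemma, and on the result being invariant under $a\mapsto a^{-1}$ so the conjugation convention does not matter, are sound.
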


\begin{proof}
Up to a phase, the metaplectic dilation is $(\mu(S_a)f)(x)=a^{-1/2}f(x/a)$.  Hence
\[
\begin{aligned}
  \ip{\Omega}{\mu(S_a)\Omega}
  &=\pi^{-1/2}a^{-1/2}\int_\R
    \exp\left[-\frac12(1+a^{-2})x^2\right]dx\\
  &=\left(\frac{2a}{a^2+1}\right)^{1/2}.
\end{aligned}
\]
The second formula follows from $1-b^2=4a^2/(a^2+1)^2$.
\end{proof}

\begin{proposition}[Bargmann--Fock Gaussian matrix coefficient]
\label{prop:BF-gaussian}
For the matrix $S_\tau$ in \eqref{eq:S-tau},
\begin{equation}
\label{eq:BF-coeff}
  C_{\BF}(\tau)
  :=\abs{\ip{\Omega}{\mu(S_\tau)\Omega}}
  =(1-e^{-4\tau})^{1/4}
  =(2\sinh 2\tau)^{1/4}e^{-\tau/2}.
\end{equation}
In the holomorphic Bargmann--Fock space
\[
  \mathcal F=\left\{f\text{ entire}:\ 
  \norm f_{\mathcal F}^2=\pi^{-1}\int_\C\abs{f(z)}^2e^{-\abs z^2}\,dL(z)<\infty\right\},
\]
the corresponding normalized Gaussian vector is
\begin{equation}
\label{eq:BF-gaussian-vector}
  \psi_\tau(z)=(1-b_\tau^2)^{1/4}
  \exp\left(\frac{b_\tau}{2}z^2\right),
\end{equation}
and $\abs{\ip{1}{\psi_\tau}_{\mathcal F}}=C_{\BF}(\tau)$.
\end{proposition}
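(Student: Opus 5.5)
The plan is to reduce everything to Lemma~\ref{lem:metaplectic-gaussian} by writing $S_\tau$ in the form $S_a$, and then to pass to the Bargmann transform. The matrix $S_\tau=\diag(\sqrt{\Coth\tau},\sqrt{\tanh\tau})$ is exactly $S_a$ with $a=\sqrt{\Coth\tau}$, since $\sqrt{\tanh\tau}=a^{-1}$. Lemma~\ref{lem:metaplectic-gaussian} therefore gives
\[
  C_{\BF}(\tau)=(1-b^2)^{1/4},
  \qquad
  b=\frac{a^2-1}{a^2+1}=\frac{\Coth\tau-1}{\Coth\tau+1}=b_\tau=e^{-2\tau},
\]
using \eqref{eq:b-tau}. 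This proves the first equality in \eqref{eq:BF-coeff} as $(1-e^{-4\tau})^{1/4}$. For the last form, note that
\[
  1-e^{-4\tau}=e^{-2\tau}(e^{2\tau}-e^{-2\tau})=2\sinh(2\tau)\,e^{-2\tau}.
\]
Taking fourth roots gives $(2\sinh2\tau)^{1/4}e^{-\tau/2}$, which is consistent with the notation $C_0=C_{\BF}^2$ of Section~\ref{subsec:notation}.

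For the Bargmann--Fock statement, apply the standard unitary Bargmann transform
\[
  \mathcal B f(z)=\pi^{-1/4}\int_\R \exp\Bigl(-\tfrac{z^2}{2}+\sqrt2\,zx-\tfrac{x^2}{2}\Bigr)f(x)\,dx,
\]
which sends $\Omega$ to the constant function $1$. Its norm in $\mathcal F$ is one with the stated normalization $\pi^{-1}e^{-\abs z^2}dL$. Since $\mu(S_a)\Omega$ is, up to a phase, the normalized dilated Gaussian $a^{-1/2}\pi^{-1/4}e^{-x^2/(2a^2)}$, the main step is to compute its image. This is a Gaussian integral: completing the square with exponent coefficient $\tfrac12(1+a^{-2})$ gives
\[
  \mathcal B(\mu(S_a)\Omega)(z)=\kappa\,\exp\Bigl(\tfrac{\beta}{2}z^2\Bigr),
  \qquad
  \beta=-1+\frac{2a^2}{a^2+1}=\frac{a^2-1}{a^2+1}=b.
\]
The constant $\kappa$ could be computed directly, but it is cleaner to fix it by normalization. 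The Bargmann--Fock norm of $e^{bz^2/2}$ is computed in polar coordinates, or by expanding in $z^{2n}$ with $\norm{z^{2n}}^2=(2n)!$, and equals $(1-b^2)^{-1/4}$ for $\abs b<1$. Hence the unit vector is $\psi_\tau$ as in \eqref{eq:BF-gaussian-vector}, up to a phase.

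Finally, $\ip{1}{\psi_\tau}_{\mathcal F}=\overline{\psi_\tau(0)}$ by the reproducing property at $0$, equivalently by orthogonality of the monomials. Its modulus is $(1-b_\tau^2)^{1/4}=C_{\BF}(\tau)$, which matches the first computation, as unitarity of $\mathcal B$ requires. The main obstacle is only bookkeeping: tracking the sign convention in the Bargmann kernel, so that $b_\tau$ appears with a plus sign rather than a minus sign, and tracking the metaplectic phase. Only the modulus is claimed, so the phase is irrelevant. If the sign comes out as $-b_\tau$ under the chosen convention, I would replace $S_a$ by $S_{a^{-1}}$, which corresponds to exchanging the roles of $Q$ and $P$ through a rotation. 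By the basis-independence remark preceding Lemma~\ref{lem:metaplectic-gaussian}, this does not change the modulus.
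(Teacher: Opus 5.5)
Your proposal is correct and follows the paper's proof essentially step for step: you apply Lemma~\ref{lem:metaplectic-gaussian} with $a=\sqrt{\Coth\tau}$ to get $b=b_\tau=e^{-2\tau}$, pass through the Bargmann transform, normalize $e^{bz^2/2}$ by monomial orthogonality, and evaluate at the origin. Your explicit completion of the square, which gives $\beta=(a^2-1)/(a^2+1)$ with a plus sign under the standard kernel, supplies a step the paper only asserts, so your closing hedge about a possible sign $-b_\tau$ is unnecessary.
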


\begin{proof}
Apply Lemma~\ref{lem:metaplectic-gaussian} with $a=\sqrt{\Coth\tau}$.  Then
\[
  \frac{a^2-1}{a^2+1}
  =\frac{\Coth\tau-1}{\Coth\tau+1}
  =e^{-2\tau}=b_\tau.
\]
Hence the coefficient is $(1-e^{-4\tau})^{1/4}$.  The identity $1-e^{-4\tau}=2e^{-2\tau}\sinh 2\tau$ gives the last expression in \eqref{eq:BF-coeff}.

The Bargmann transform carries $\Omega$ to the constant function $1$ and conjugates the metaplectic representation on $L^2(\R)$ into its realization on $\mathcal F$.  The matrix coefficient may therefore be computed in $\mathcal F$, where $\mu(S_\tau)\Omega$ becomes, up to phase, the Gaussian vector $\psi_\tau$ of \eqref{eq:BF-gaussian-vector}.  Monomial orthogonality gives
\[
  \left\|\exp\left(\frac b2z^2\right)\right\|_{\mathcal F}^2
  =\sum_{m\ge0}\binom{2m}{m}\left(\frac{b^2}{4}\right)^m
  =(1-b^2)^{-1/2}.
\]
This proves the normalization in \eqref{eq:BF-gaussian-vector}.  Since $\ip{1}{f}_{\mathcal F}=\overline{f(0)}$ under our inner-product convention, evaluation at the origin gives the stated overlap.
\end{proof}

\subsection{Comparison with the exact overlap}
\begin{theorem}[Leading asymptotic of the overlap]
\label{thm:BF-leading-overlap}
For fixed $\tau>0$,
\begin{equation}
\label{eq:finite-overlap-factorization}
  c_\ell(\tau)
  =C_{\BF}(\tau)r(\tau)^\ell
   \bigl(1+O_\tau(\ell^{-1})\bigr),
  \qquad \ell\to\infty.
\end{equation}
Equivalently,
\[
  s_\ell(\tau)
  =C_{\BF}(\tau)^2r(\tau)^{2\ell}
   \bigl(1+O_\tau(\ell^{-1})\bigr).
\]
The coefficient $C_{\BF}(\tau)$ equals the Gaussian matrix coefficient of the metaplectic operator associated with $S_\tau$.
\end{theorem}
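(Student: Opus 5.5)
The plan is to combine the exact overlap formula of Theorem~\ref{thm:main} with the refined large-$\ell$ expansion of Proposition~\ref{prop:sell-expansion}, and then identify the resulting constant with the Gaussian matrix coefficient of Proposition~\ref{prop:BF-gaussian}.

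\textbf{Step 1.} Take $N=1$ in \eqref{eq:sell-expansion}. This gives
\[
  s_\ell(\tau)=C_0(\tau)\,r(\tau)^{2\ell}\bigl(1+a_1(\tau)\ell^{-1}+O_\tau(\ell^{-2})\bigr).
\]
The only analytic inputs behind this are the refined Stirling formula \eqref{eq:central-binomial-refined} and the endpoint Laplace expansion \eqref{eq:legendre-refined} of $\Leg_\ell(\cosh 2\tau)$, both proved earlier. In particular the error is $O_\tau(\ell^{-1})$, which is the second displayed form of the statement.

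\textbf{Step 2.} Obtain the overlap itself by taking square roots. Since $c_\ell(\tau)>0$, $C_0(\tau)>0$ and $\sqrt{1+x}=1+O(x)$, we get
\[
  c_\ell(\tau)=\sqrt{C_0(\tau)}\,r(\tau)^\ell\bigl(1+O_\tau(\ell^{-1})\bigr).
\]

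\textbf{Step 3.} Identify the constant. By the notation of Section~\ref{subsec:notation},
\[
  C_0(\tau)=\sqrt{2\sinh 2\tau}\,e^{-\tau}.
\]
Proposition~\ref{prop:BF-gaussian} gives
\[
  C_{\BF}(\tau)=(1-e^{-4\tau})^{1/4}=(2\sinh2\tau)^{1/4}e^{-\tau/2}.
\]
It then suffices to check the elementary identity $1-e^{-4\tau}=2e^{-2\tau}\sinh 2\tau$, which shows $C_{\BF}(\tau)^2=C_0(\tau)$. This proves \eqref{eq:finite-overlap-factorization}.

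The last sentence of the theorem is the definition of $C_{\BF}$ in \eqref{eq:BF-coeff} combined with Lemma~\ref{lem:metaplectic-gaussian} applied to $S_\tau$ with $a=\sqrt{\Coth\tau}$. Independence of the choice of orthonormal bases was already discussed after Proposition~\ref{prop:linear-contact-map}.

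\textbf{Expected obstacle.} The main risk is uniformity of the $O(\ell^{-1})$ remainder, which must be an honest bound for fixed $\tau$ rather than just $o(1)$. Corollary~\ref{cor:asymptotics} alone gives only $1+o(1)$, so I would rely on the quantitative Laplace-method estimate in the proof of Proposition~\ref{prop:sell-expansion}. There the contribution of $[\delta,\pi]$ is exponentially smaller than the main term. On $[0,\delta]$, the Taylor expansion of $\varphi_\eta$ to order $\theta^4$ together with the Gaussian majorant $\varphi_\eta\le\eta-\tfrac{a_\eta}{2}\theta^2$ gives a relative error $O_\eta(\ell^{-1})$.

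Since $c_\ell$ is strictly positive, no division issues arise. The identification with the metaplectic coefficient is a constant-matching step; it is not a derivation of the asymptotic from the linear model.
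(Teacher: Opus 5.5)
Your proposal is correct and follows the paper's route. The asymptotic comes from the exact overlap formula, and the constant is matched with Proposition~\ref{prop:BF-gaussian} via $C_{\BF}(\tau)^2=C_0(\tau)$. The one difference is in the source of the error rate: the paper's proof cites Corollary~\ref{cor:asymptotics}, which by itself only gives $1+o(1)$. Your appeal to Proposition~\ref{prop:sell-expansion} is what actually delivers the stated $O_\tau(\ell^{-1})$ remainder, and there is no circularity because that proposition's proof uses only Stirling's formula and the Laplace integral.
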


\begin{proof}
The asymptotic formula follows from Corollary~\ref{cor:asymptotics}.  Proposition~\ref{prop:BF-gaussian} identifies its leading coefficient with the Gaussian matrix coefficient of $S_\tau$.
\end{proof}

The coefficient $C_{\BF}(\tau)$ comes from the linear symplectic calculation above, whereas the exponential factor is already present in the exact representation-theoretic formula
\[
  r(\tau)^\ell=e^{-\ell A(\tau)},
  \qquad
  A(\tau)=\log\frac{2}{1-e^{-2\tau}}.
\]
Thus the Bargmann--Fock calculation gives the prefactor in \eqref{eq:finite-overlap-factorization}, while the exponential rate comes from the exact overlap formula.

\end{document}